 \newcommand{\lr}[1]{\langle #1 \rangle}
\newcommand{\lra}{\leftrightarrow}
\newcommand{\Kw}{\textit{Kw}}
 \newcommand{\K}{\ensuremath{\textit{K}}}
 \newcommand{\hK}{\ensuremath\hat{{\textit{K}}}}
\newcommand{\I}{\ensuremath{\textit{I}}}
\newcommand{\M}{\ensuremath{\mathcal{M}}}
\newcommand{\BP}{\textbf{P}}
\newcommand{\Ag}{\textbf{AG}}
\newcommand{\IRI}{\mathbf{IRI}}
\newcommand{\LI}{\mathbf{LI}}
\renewcommand{\phi}{\varphi}
\newcommand{\weg}[1]{}
\theoremstyle{definition}
\newtheorem{theorem}{Theorem}
\newtheorem{lemma}[theorem]{Lemma}
\newtheorem{definition}[theorem]{Definition}
\newtheorem{remark}[theorem]{Remark}
\newtheorem{proposition}[theorem]{Proposition}
\newtheorem{corollary}[theorem]{Corollary}
\newtheorem{fact}[theorem]{Fact}
\title{Axiomatizing Rumsfeld Ignorance}
\author{Jie Fan\\
\small Institute of Philosophy, Chinese Academy of Sciences;\\
\small School of Humanities, University of Chinese Academy of Sciences  \\
\small \texttt{jiefan@ucas.ac.cn}}
\date{}
\begin{document}

\maketitle

\begin{abstract}
    In a recent paper, Kit Fine presents some striking results concerning the logical properties of (first-order) ignorance, second-order ignorance and Rumsfeld ignorance. However, Rumsfeld ignorance is definable in terms of ignorance, which makes some existing results and the axiomatization problem trivial. A main reason is that the accessibility relations for the implicit knowledge operator contained in the packed operators of ignorance and Rumsfeld ignorance are the same. In this work, we assume the two accessibility relations to be different so that one of them is an arbitrary subset of the other. This will avoid the definability issue and retain most of the previous validities. The main results are axiomatizations over various proper bi-frame classes. Finally we apply our framework to analyze Fine's results.
\end{abstract}

\weg{\begin{abstract}
    In this paper, we propose a logic of (first-order) ignorance whether and Rumsfeld ignorance. We axiomatize this logic over various frame classes, in which the completeness of the minimal system uses a nontrivial inductive method. We then apply our work to Fine~\cite{Fine:2018}. As we will show, within our framework, we can simplify and extend some of the results in~\cite{Fine:2018}, and obtain some results that Fine did not notice. This may help us trace out more clearly the logical interactions among first-order ignorance, Rumsfeld ignorance and second-order ignorance.
\end{abstract}}

\noindent Keywords: ignorance, Rumsfeld ignorance, second-order ignorance, axiomatizations

\section{Introduction}

Since Socrates~\cite{Bett:2010},
ignorance has played a pivotal role in many areas, for instance, not only in epistemology, but also in ethics, philosophy of law, social philosophy, philosophy of science and even science itself~\cite{Peels:2007}.\footnote{For instance, researchers focus on the nature/definition of ignorance, the relation between ignorance and virtue, and so on. Moreover, it is argued in~\cite{Firestein:2012} that ignorance is the engine of science.} Sometimes a reference to ignorance has featured in epistemological discussions --- as in the title of Unger~\cite{Unger:1975} --- but only as a counterpoint to knowledge, when emphasizing how little knowledge of this or that kind is possible. Recently, however, there has been an explosion of interest in ignorance in its own right, with attempts to say what exactly ignorance consists in and what its fundamental logical properties are. Instead of simply taking for granted the standard view (SV), according to which ignorance of a proposition is merely the absence of knowledge of the proposition~\cite{Unger:1975,Driver:1989,Goldmanetal:2009,LeMorvan:2010,LeMorvan:2011,LeMorvan:2012,LeMorvan:2013}, we now also have what has been called the new view (NV),
as well as what has been termed the logical view (LV) to contend with.\footnote{The terminology ‘the standard view’ is introduced in~\cite{LeMorvan:2011}, ‘the new view’ is from~\cite{Peels:2011}, whereas the term
‘the logical view’ comes from~\cite{Fan:2016}.} According to NV, ignorance is instead simply the absence of true belief~\cite{Peels:2011,Peels:2012,Kyle:2015}. According to LV, ignorance with respect to a proposition is the absence of knowledge of that proposition and of its negation~\cite{wiebeetal:2003,hoeketal:2004,steinsvold:2008,Fanetal:2014,Fanetal:2015,olsson2015explicating}.



In addition to the competing views about what ignorance consists in, various different forms of ignorance have attracted attention in the literature, such as pluralistic ignorance~\cite{o1976pluralistic,bjerring2014rationality,proietti2014ddl}, circumscriptive ignorance~\cite{konolige1982circumscriptive}, chronological ignorance~\cite{shoham1986chronological}, group ignorance~\cite{Fan:2016}, factive ignorance~\cite{kubyshkina2019logic}, radical ignorance~\cite{Fano:2020working}, relative ignorance~\cite{Goranko:2021}, disjunctive ignorance~\cite{Fan:2021disjunctive}, severe ignorance~\cite{bonzio2022logical}, and so on. In a recent paper~\cite{Fine:2018}, Kit Fine presents some striking results concerning the logical properties of ignorance, for the purpose of presenting which, Fine introduces the following terminology, all defined in terms of the primitive knowledge operator $\K$ of epistemic logic:

\begin{itemize}
    \item[(i)] One is {\em ignorant that} $\phi$, if one does not know that $\phi$ ($\neg\K\phi$)
    \item[(ii)] It is {\em (epistemically) possible} that $\phi$, if one is ignorant that not-$\phi$ ($\hK\phi=_{df}\neg\K\neg\phi$)
    \item[(iii)] One is {\em ignorant of the fact that} $\phi$, if $\phi$ is the case and one is ignorant that $\phi$ ($\phi\land\neg\K\phi$)
    \item[(iv)] One is {\em (first-order) ignorant whether $\phi$}, if one is both ignorant that $\phi$ and ignorant that not-$\phi$ ($\I\phi=_{df}\neg\K\phi\land\neg\K\neg\phi$)
    \item[(v)] One is {\em Rumsfeld ignorant of} $\phi$, if one is ignorant of the fact that one is ignorant whether $\phi$ ($\I\phi\land\neg\K\I\phi$)\footnote{As correctly observed by Fine in~\cite{Fine:2018}, Rumsfeld ignorance is a form of Fitchean ignorance; in other words, it is of the form $\phi\land\neg \K\phi$, a notion called `unknown truths' or `accident' in the literature, see e.g.~\cite{Marcos:2005,steinsvold:2008,Steinsvold:2008a}. This helps us find the suitable canonical relation for Rumsfeld ignorance in Def.~\ref{def.cm}.}
    \item[(vi)] One is {\em second-order ignorant whether} $\phi$, if one is ignorant whether one is ignorant whether $\phi$ ($\I\I\phi$)
\end{itemize}

Among other things, Fine establishes the following results within the
context of the modal system ${\bf S4}$, where (4) is surprising and interesting, with $\Box$ written as $K$ in the relevant pages (pp.~4032--4033), though in the later `formal appendix', Fine uses the $\Box$ notation (with $\Diamond$ for the above $\hK$):\footnote{Of the following, only (1) and (4) are proved, in~\cite[Lemma~3, Thm.~4]{Fine:2018}, the remained being justified by semi-formal arguments. That the result (4) is surprising and interesting, is because it disobeys our intuition. As Fine shows via an argument in~\cite[p.~4033]{Fine:2018}, in principle, one could know that those propositions, which one does not know, do not appear on the list of their knowledge; however, (4) tells us that this is impossible.}

\begin{enumerate}
    \item[(1)] Second-order ignorance implies first-order ignorance.
    \item[(2)] Second-order ignorance implies Rumsfeld ignorance.
    \item[(2$^\ast$)] Rumsfeld ignorance implies second-order ignorance.
    \item[(3)] One does not know that one is Rumsfeld ignorant.
    \item[(4)] One does not know that one is second-order ignorant.
\end{enumerate}


Now, as indicated in our opening paragraphs, the relation between knowledge and ignorance has recently become somewhat problematic, making it resonable to approach the logic of ignorance in its own right, rather than by defining ignorance --- first-order, second-order, or of the mixed `Rumsfeldian' variety --- in terms of a primitive $K$ operator, and no such operator is included here for that reason. Following Kit Fine, in the semantic treatment of ignorance (specifically first-order ignorance) and Rumsfeld ignorance below (Definition~\ref{def.semantics}, $\I$ and $\I^R$ for these notions), we will use a binary accessibility relation $R$ to interpret $\I$ and $\I^R$ exactly as if these had been defined in terms of a $\Box$ operator by the definitions $\I\phi:=\neg\Box\phi\land\neg\Box\neg\phi$ and  $\I^R\phi:=\I\phi\land\neg\Box\I\phi$.

However, as we shall show below, Rumsfeld ignorance is definable in terms of (first-order) ignorance over the class of all frames. This makes some of Kit Fine's results mentioned above and the problem of axiomatizing Rumsfeld ignorance (and (first-order) ignorance) trivial, which may be undesirable.

We can fix this issue by sticking with the ignorance-related primitives to emphasize neutrality w.r.t. the intended interpretation of this $\Box$ operator.\footnote{By ``sticking with the ignorance-related primitives'', we mean using $\I$ and $\I^R$ instead of $\Box$ as primitives. By emphasizing neutrality with respect to the interpretation of $\Box$ operator, we would like to indicate that $\Box$ can be given different interpretations in different contexts, just as the $\Box$ operators implicitly contained in the packaged operators $\I$ and $\I^R$.} For example, an adherent of SV may want to think of this as representing knowledge, and an adherent of NV may prefer to think of $\Box$ as representing true belief, while someone else again may want to think of this as representing just belief. This indicates that one may give different interpretations to the implicit $\Box$ operator in the packaged operator $\I$ on one hand, and in the packaged operator $\I^R$ on the other hand, which semantically corresponds to different accessibility relations for the implicit $\Box$ operator in $\I$ and $\I^R$, denoted $R$ and $R^{\bullet}$ below, respectively. This is what we will do. Moreover, we will assume that either $R\subseteq R^{\bullet}$ or $R^{\bullet}\subseteq R$, which may retain some validities that hold in the case of $R=R^{\bullet}$. We will show that in either case, the previous definability fails; in fact, Rumfeld ignorance cannot be defined with ignorance. We will then focus on the restriction of $R\subseteq R^{\bullet}$ (the reason of focusing on this restriction will be explained at the end of Sec.~\ref{def.semantics}) and axiomatize the logic of ignorance and Rumsfeld ignorance over various classes of frames with this restriction.

The remainder of the paper is structured as follows. Sec.~\ref{sec.synseman} introduces the syntax and semantics of the logic of (first-order) ignorance and Rumsfeld ignorance, demonstrates the definability of Rumsfeld ignorance in terms of ignorance, and make some assumptions about the relationship between the accessibility relations for the implicit $\Box$ contained in $\I$ and $\I^R$, which makes the previous definability fail. Sec.~\ref{sec.minimallogic} proposes the minimal logic of ignorance and Rumsfeld ignorance and shows its soundness and strong completeness, where the completeness is proved via a nontrivial inductive method. Sec.~\ref{sec.extensions} axiomatizes the logic over various special frame classes. Sec.~\ref{sec.applications} applies our framework to analyze Fine's results in~\cite{Fine:2018}. We conclude with some remarks in Sec.~\ref{sec.conclusion}.

\section{Syntax and Semantics}\label{sec.synseman}

Fix a nonempty set of propositional variables $\BP$. Throughout the paper, we let $p\in \BP$. For simplicity, in what follows we introduce a single-agent setting but it can be easily extended to a multi-agent case in a standard way.

\begin{definition}[Language] The language of the logic of (first-order) ignorance and Rumsfeld ignorance, denoted $\IRI$, is defined recursively as follows:
$$\phi::=p\mid \neg\phi\mid (\phi\land\phi)\mid \I\phi\mid \I^R\phi.$$
\end{definition}

Intuitively, $\I\phi$ is read ``one is {\em (first-order) ignorant whether} $\phi$'', and in the context of contingency logic, $\I\phi$ is read ``it is {\em contingent} that $\phi$''\footnote{The contingency operator, denoted $\nabla$ in the literature (see e.g.~\cite{MR66}), can be defined in terms of the necessity operator $\Box$, as $\nabla\phi=_{df}\neg\Box\phi\land\neg\Box\neg\phi$. In the epistemic setting, if $\Box$ is seen as the (propositional) knowledge operator, then $\nabla$ is (first-order) ignorance operator. Thus here is a technical relationship between contingency and ignorance. As a matter of fact, the people working on contingency logic (\cite{MR66,Humberstone95,DBLP:journals/ndjfl/Kuhn95,DBLP:journals/ndjfl/Zolin99}, etc.) and the people working on the logic of ignorance (\cite{wiebeetal:2003,hoeketal:2004,steinsvold:2008}, etc.) have been unaware of each other's work for a long time. For a survey of contingency logic, we recommend~\cite{Fanetal:2015}.}; $\I^R\phi$ is read ``one is {\em Rumsfeld ignorant of} $\phi$''. Other connectives are defined as usual; in particular, $\Kw\phi$, read ``one knows whether $\phi$'', is defined as $\neg\I\phi$. The language of the logic of ignorance, denoted $\mathcal{L}(\I)$, is the fragment of $\IRI$ without the construct $\I^R\phi$.

\weg{Fix a nonempty set of propositional variables $\BP$ and a nonempty finite set of agents $\Ag$. Throughout the paper, we let $p\in \BP$ and $a\in \Ag$.
\begin{definition}[Language] The language of the logic of (first-order) ignorance whether and Rumsfeld ignorance, denoted $\IRI$, is defined recursively as follows:
$$\phi::=p\mid \neg\phi\mid (\phi\land\phi)\mid \I_a\phi\mid \I_a^R\phi.$$
\end{definition}

Intuitively, $\I_a\phi$ is read ``the agent $a$ is {\em (first-order) ignorant whether} $\phi$'', and in the context of contingency logic, $\I_a\phi$ is read ``$\phi$ is contingent for $a$''~\cite{Fanetal:2015}; $\I^R_a\phi$ is read ``the agent $a$ is {\em Rumsfeld ignorant of} $\phi$''. Other connectives are defined as usual; in particular, $\Kw_a\phi$, read ``$a$ knows whether $\phi$'', is defined as $\neg\I_a\phi$.}

Now we follow Kit Fine~\cite{Fine:2018} to interpret the language $\IRI$. According to Fine's abbreviated definitions (see (iv) and (v) in the introduction), to interpret the meaning of Rumsfeld ignorance operator $\I^R$, one accessibility relation is enough.


A {\em (Kripke) model} is a tuple $\M=\lr{S,R,V}$, where $S$ is a nonempty set of states (or points), $R$ is a binary relation over $S$, called `accessibility relation', and $V$ is a valuation. A {\em pointed model} is a pair $(\M,s)$  where $\M$ is a model and $s$ is a state of $\M$. A {\em frame} is a model without a valuation. 

\weg{\begin{definition}[Structure]
A {\em model} is a tuple $\M=\lr{S,\{R_a\mid a\in\Ag\},V}$, where $S$ is a nonempty set of states (or points), for every $a\in \Ag$, $R_a$ is a binary relation over $S$, called `accessibility relation', and $V$ is a valuation function. A {\em pointed model} is a pair of a model with a point in it. A {\em frame} is a model without a valuation. Model $\M$ is said to be a $\mathcal{K}45$-model (resp. a $\mathcal{KD}45$-model, an $\mathcal{S}4$-model, an $\mathcal{S}5$-model), if for each $a\in\Ag$, $R_a$ is transitive and Euclidean (resp. serial and transitive and Euclidean, reflexive and transitive, reflexive and Euclidean). A $\mathcal{K}45$-frame and the like are defined similarly.
\end{definition}}


\begin{definition}[Semantics]\label{def.semantics}
Given a model $\M=\lr{S,R,V}$ and a state $s\in S$, the semantics of $\IRI$ is defined recursively as follows.
\[
\begin{array}{|lll|}
\hline
    \M,s\vDash p & \text{iff} & s\in V(p)\\
    \M,s\vDash\neg\phi & \text{iff} & \M,s\nvDash\phi\\
    \M,s\vDash\phi\land\psi&\text{iff} & \M,s\vDash\phi\text{ and }\M,s\vDash\psi\\
    \M,s\vDash\I\phi &\text{iff}& \text{for some }t\text{ such that }sRt\text{ and }\M,t\vDash\phi,\text{ and }\\
    &&\text{for some }u\text{ such that }sRu\text{ and }\M,u\nvDash\phi\\
    \M,s\vDash\I^R\phi & \text{iff}& \M,s\vDash\I\phi\text{ and for some }t\text{ such that }sRt\text{ and }\M,t\nvDash \I\phi \\
\hline
\end{array}
\]
\end{definition}

We say that $\phi$ is {\em valid on a class of frames $F$}, notation: $F\vDash\phi$, if for all frames $\mathcal{F}$ in $F$, for all models $\M$ based on $\mathcal{F}$, for all $s$ in $\M$, we have $\M,s\vDash\phi$. We say that $\phi$ is {\em valid}, notation: $\vDash\phi$, if $\phi$ is valid on the class of all frames. Given $\Gamma\subseteq\IRI$, we say that $\M,s\vDash\Gamma$, if for all $\phi\in\Gamma$ we have $\M,s\vDash\phi$. We say $\phi$ is {\em satisfiable}, if there exists a pointed model $(\M,s)$ such that $\M,s\vDash\phi$. We say that $\phi$ is a {\em logical consequence} of $\Gamma$ over a class of frames $F$, notation: $\Gamma\vDash_F\phi$, if for all frames $\mathcal{F}$, for all models $\M$ based on $\mathcal{F}$, for all $s$ in $\M$, if $\M,s\vDash\Gamma$, then $\M,s\vDash\phi$.

Under this semantics, we have the following results which describe the interactions beween first-order ignorance and Rumsfeld ignorance. The proof is similar to that of Prop.~\ref{prop.bimodel-validities} below.

\begin{proposition}\label{prop.validities-semantics} Let $\phi,\psi,\chi\in\IRI$ be arbitrary.
\begin{itemize}
\item[(i)] $\vDash\I\phi\land \I(\I\phi\vee\chi)\to\I^R\phi$;
\item[(ii)] If $\vDash\psi\to\I\phi$, then $\vDash\I^R\phi\to\I\psi\lor\I(\psi\vee\phi)$.
\end{itemize}
\end{proposition}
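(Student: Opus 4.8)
The plan is to prove both parts directly from the Kripke semantics of Definition~\ref{def.semantics}, reading the clause for $\I^R\phi$ as the conjunction of ``$\M,s\vDash\I\phi$'' and ``some $R$-successor of $s$ falsifies $\I\phi$'', and then chasing the existential witnesses that each occurrence of $\I$ supplies. No forcing/induction is needed; the whole argument is a matter of bookkeeping successors.

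For (i) I would fix a model $\M=\lr{S,R,V}$ and $s\in S$ with $\M,s\vDash\I\phi\land\I(\I\phi\vee\chi)$. The first conjunct already gives the first half of $\I^R\phi$. For the second half I only need a single $R$-successor of $s$ at which $\I\phi$ fails, and this is exactly the ``negative'' witness guaranteed by $\I(\I\phi\vee\chi)$: some $u$ with $sRu$ and $\M,u\nvDash\I\phi\vee\chi$, hence $\M,u\nvDash\I\phi$. Thus $\M,s\vDash\I^R\phi$. A useful sanity check is that the ``positive'' witness of $\I(\I\phi\vee\chi)$ and the disjunct $\chi$ are never used, so the hypothesis is, if anything, stronger than necessary.

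For (ii) I would assume $\vDash\psi\to\I\phi$, i.e.\ by contraposition $\vDash\neg\I\phi\to\neg\psi$, fix $\M,s\vDash\I^R\phi$, and unpack: from $\M,s\vDash\I\phi$ obtain a $\phi$-successor $t$ and a $\neg\phi$-successor $u$ of $s$, and from the second conjunct of $\I^R\phi$ obtain a successor $v$ of $s$ with $\M,v\nvDash\I\phi$, whence $\M,v\nvDash\psi$ by the contrapositive of the hypothesis. Then I would case-split on whether $\psi$ holds at some $R$-successor of $s$. If it does, that successor is a positive witness and $v$ a negative witness for $\I\psi$, so $\M,s\vDash\I\psi$. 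If it does not, then $\psi$ fails at every $R$-successor; in particular $u$ falsifies both $\psi$ and $\phi$, hence falsifies $\psi\vee\phi$, while $t$ satisfies $\phi$ hence $\psi\vee\phi$, so $\M,s\vDash\I(\psi\vee\phi)$. Either way the consequent $\I\psi\lor\I(\psi\vee\phi)$ holds at $s$.

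There is no real obstacle; the only points needing a moment's care are, in the second case of (ii), noticing that the positive witness for $\I(\psi\vee\phi)$ still exists even when no $R$-successor satisfies $\psi$ — because it is delivered by the $\phi$-successor $t$, not by $\psi$ — and checking that the hypothesis $\vDash\psi\to\I\phi$ is invoked correctly, namely exactly once, through its contrapositive, to force $\psi$ to fail at the $\neg\I\phi$-successor $v$.
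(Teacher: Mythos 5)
Your proof is correct and follows essentially the same witness-chasing argument as the paper, which establishes this proposition by reference to the bi-semantics analogue (Prop.~\ref{prop.bimodel-validities}, specialised to $R=R^{\bullet}$): part (i) uses only the negative witness of $\I(\I\phi\vee\chi)$ exactly as you do, and part (ii) extracts the same three successors. The only (cosmetic) difference is that for (ii) the paper argues by contradiction from $\neg\I\psi\land\neg\I(\psi\vee\phi)$, propagating $\psi$ to the $\neg\I\phi$-successor to contradict $\vDash\psi\to\I\phi$, whereas you run the equivalent direct case split on whether some successor satisfies $\psi$.
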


However, it may be surprising to see that under the semantics, Rumsfeld ignorance is definable in terms of first-order ignorance.
\begin{proposition}\label{prop.definable-ribyi} Let $\phi\in\mathcal{L}(\I)$ be arbitrary. We have\footnote{This result is credited to Yanjing Wang.\label{fn.yanjing}}
$$\vDash\I^R\phi\lra\I\phi\land (\I\I\phi\vee\I(\phi\to\I\phi)).$$
\end{proposition}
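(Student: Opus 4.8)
The plan is to unfold both sides of the biconditional against the semantics of Definition~\ref{def.semantics} at an arbitrary pointed model $(\M,s)$ with $\M=\lr{S,R,V}$, and show the two truth conditions coincide. The left-hand side $\I^R\phi$ holds at $s$ iff $\M,s\vDash\I\phi$ and there is a $t$ with $sRt$ and $\M,t\nvDash\I\phi$. Since the right-hand side also carries the conjunct $\I\phi$, we may assume $\M,s\vDash\I\phi$ throughout and reduce the problem to showing, under that assumption, that
$$(\exists t)(sRt \wedge \M,t\nvDash\I\phi) \quad\Longleftrightarrow\quad \M,s\vDash\I\I\phi \ \text{ or }\ \M,s\vDash\I(\phi\to\I\phi).$$
So I would first record what ``$\M,t\nvDash\I\phi$'' means: it says $t$ has no $R$-successor satisfying $\phi$, or no $R$-successor refuting $\phi$ --- i.e.\ $t$ is a ``$\Box$-point'' for $\phi$ in the sense that either $\Box\phi$ or $\Box\neg\phi$ holds at $t$.

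For the forward direction, suppose such a $t$ exists with $\M,t\nvDash\I\phi$, i.e.\ $\M,t\vDash\Kw\phi$. I would split on whether $\M,t\vDash\phi$. If $\M,t\nvDash\phi$, then $\M,t\vDash\phi\to\I\phi$ vacuously; combining this witness at $t$ with a witness among the $R$-successors of $s$ that \emph{refutes} $\phi\to\I\phi$ --- which exists because $\M,s\vDash\I\phi$ gives an $R$-successor $u$ with $\M,u\vDash\phi$, and we can arrange $\M,u\nvDash\I\phi$ (hence $\M,u\nvDash\phi\to\I\phi$) by choosing the $\Box\phi$-or-$\Box\neg\phi$ structure appropriately; the cleanest route is to argue directly that $\I\phi$ at $s$ plus $\Kw\phi$ at some successor yields $\I(\phi\to\I\phi)$ by producing both a successor where $\phi\to\I\phi$ holds and one where it fails. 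If on the other hand $\M,t\vDash\phi$, then since $\M,t\vDash\Kw\phi$, in fact $\M,t\vDash\Box\phi$, and I would use this together with $\I\phi$ at $s$ to get $\I\I\phi$ at $s$: $t$ is a successor of $s$ where $\I\phi$ fails, and one still needs a successor of $s$ where $\I\phi$ holds. The main subtlety --- and I expect this to be the crux --- is manufacturing that ``positive'' witness for $\I\I\phi$ (a successor of $s$ satisfying $\I\phi$), because having one successor where $\I\phi$ fails does not by itself force another where it holds; here one exploits that $\M,s\vDash\I\phi$ supplies successors realizing \emph{both} $\phi$ and $\neg\phi$, and one of these can be shown to satisfy $\I\phi$, falling back to the other disjunct $\I(\phi\to\I\phi)$ when it does not. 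So the forward direction is really a careful case analysis ensuring that in every configuration at least one of the two target disjuncts is witnessed.

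For the backward direction, assume $\M,s\vDash\I\phi$ and at least one of $\I\I\phi$, $\I(\phi\to\I\phi)$ holds at $s$; I must produce $t$ with $sRt$ and $\M,t\nvDash\I\phi$. If $\M,s\vDash\I\I\phi$, then by the very first clause there is an $R$-successor of $s$ refuting $\I\phi$, and we are done immediately. If instead $\M,s\vDash\I(\phi\to\I\phi)$, then there is an $R$-successor $t$ with $\M,t\nvDash\phi\to\I\phi$, which means $\M,t\vDash\phi$ and $\M,t\nvDash\I\phi$ --- again exactly the witness required. So the backward direction is short and essentially reads off the existential witness packed inside $\I\I\phi$ or inside $\I(\phi\to\I\phi)$.

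I would close by noting that the argument is uniform in $(\M,s)$, hence gives validity on the class of all frames, and observe that $\phi\in\mathcal{L}(\I)$ is the only hypothesis needed (it guarantees $\I\phi\in\mathcal{L}(\I)$, so the right-hand side is a well-formed $\IRI$-formula); the analogous semantic bookkeeping for Proposition~\ref{prop.validities-semantics} is subsumed by the same case analysis. The single step I expect to absorb most of the work is the forward direction's construction of the positive $\I\phi$-successor needed for $\I\I\phi$, i.e.\ showing that whenever no successor of $s$ supplies it, the configuration must instead validate $\I(\phi\to\I\phi)$.
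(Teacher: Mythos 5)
Your backward direction is correct and is exactly the paper's argument: each disjunct of $\I\I\phi\vee\I(\phi\to\I\phi)$ directly packages an $R$-successor of $s$ refuting $\I\phi$. The forward direction, however, is not a proof: you explicitly leave the crux open (``the single step I expect to absorb most of the work \dots''), and the steps you do write down contain errors. You cannot ``arrange $\M,u\nvDash\I\phi$ by choosing the $\Box\phi$-or-$\Box\neg\phi$ structure appropriately'' --- the model is given, not constructed, and indeed there are models where every $\phi$-successor of $s$ satisfies $\I\phi$, so no successor refutes $\phi\to\I\phi$ and the disjunct $\I(\phi\to\I\phi)$ simply fails (only $\I\I\phi$ holds there). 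Likewise, in your case $\M,t\vDash\phi$ the inference to $\M,t\vDash\Box\phi$ is unwarranted without reflexivity, and aiming for $\I\I\phi$ there is the hard route, whereas $\I(\phi\to\I\phi)$ is immediate: $t\vDash\phi\land\neg\I\phi$ refutes $\phi\to\I\phi$, while the successor $v$ with $v\nvDash\phi$ (which exists since $s\vDash\I\phi$) satisfies it. Your split on whether $\phi$ holds at $t$ is the wrong case distinction; in each branch you still need the further, unaddressed split on whether some successor of $s$ satisfies $\phi\land\neg\I\phi$.

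The gap is fillable, in two ways. The clean direct version of your plan splits on exactly that last question: if some $R$-successor $w$ of $s$ satisfies $\phi\land\neg\I\phi$, then $w$ refutes $\phi\to\I\phi$ and $v$ satisfies it, so $s\vDash\I(\phi\to\I\phi)$; otherwise every $\phi$-successor of $s$ satisfies $\I\phi$, in particular $u$ does, and together with the witness $t\nvDash\I\phi$ from $\I^R\phi$ this gives $s\vDash\I\I\phi$. The paper instead argues by contradiction: assuming both disjuncts fail at $s$, the failure of $\I(\phi\to\I\phi)$ forces all successors of $s$ to agree on $\phi\to\I\phi$; since $v$ satisfies it, so does $u$, hence $u\vDash\I\phi$, and then $u$ and $t$ witness $\I\I\phi$, contradicting the other assumption. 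Either route closes your gap; as submitted, the forward direction is incomplete.
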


\begin{proof}
Let $\M=\lr{S,R,V}$ be an arbitrary model and $s\in S$. Suppose that $\M,s\vDash \I^R\phi$. By semantics of $\I^R\phi$, it is obvious that $\M,s\vDash\I\phi$ and there exists $t$ such that $sRt$ and $\M,t\nvDash\I\phi$. From $\M,s\vDash\I\phi$, it follows that there are $u,v$ such that $sRu$, $sRv$ and $\M,u\vDash\phi$ and $\M,v\nvDash\phi$. Assume towards contradiction that $\M,s\nvDash\I\I\phi\vee\I(\phi\to\I\phi)$. Since $\M,v\nvDash\phi$, we have $\M,v\vDash\phi\to\I\phi$. Using this and $s\nvDash\I(\phi\to\I\phi)$ and $sRu$ and $sRv$, we infer that $\M,u\vDash\phi\to\I\phi$, thus $\M,u\vDash\I\phi$. From this and $\M,t\nvDash\I\phi$ and $sRt$ and $sRu$, we obtain that $\M,s\vDash\I\I\phi$: a contradiction.

Conversely, suppose that $\M,s\vDash\I\phi\land (\I\I\phi\vee\I(\phi\to\I\phi))$, to show that $\M,s\vDash\I^R\phi$. By semantics, it suffices to show that there is a $t$ such that $sRt$ and $\M,t\nvDash\I\phi$. We consider two cases. If $\M,s\vDash \I\I\phi$, by semantics of $\I$, we are done; if $\M,s\vDash\I(\phi\to\I\phi)$, then there exists $t$ such that $sRt$ and $\M,t\nvDash\phi\to\I\phi$, and so $\M,t\nvDash\I\phi$, as desired.
\end{proof}

Note that this is a surprising result. As one may know, the expressive power of $\mathcal{L}(\I)$ is very weak~(see~\cite{Fanetal:2015}, the metaphysical counterpart of ignorance is contingency. This is
similar to the case of (propositional) knowledge and necessity: the metaphysical
counterpart of knowledge is necessity.). Even so, it can define the complex notion of Rumsfeld ignorance.

Due to the definability result, some results about the relationships among first-order ignorance, second-order ignorance and Rumsfeld ignorance in~\cite{Fine:2018}, as mentioned in the introduction, and the axiomatization problem become trivial.\footnote{Since the axiomatizations of the logic of ignorance already exist in the literature (see e.g.~\cite{hoeketal:2004,MR66,Humberstone95,DBLP:journals/ndjfl/Kuhn95,DBLP:journals/ndjfl/Zolin99,Fanetal:2014,Fanetal:2015}), thus the axiomatizations of Rumsfeld ignorance and ignorance can be obtained from a translation induced by the definability result. Moreover, this also makes some of Kit Fine's results mentioned above trivial. Take the item (2) ``Second-order ignorance implies Rumsfeld ignorance'' mentioned in the introduction as an example. By Prop.~\ref{prop.definable-ribyi} and item (1) ``Second-order ignorance implies first-order ignorance'', it should be trivial to obtain the item (2).} In retrospect, one main reason for the definability result is that the accessibility relations for the implicit $\Box$ operator contained in the packaged operators $\I$ and $\I^R$ are the same.\footnote{Note that this does {\em not} mean that the $\Box$ operator can be defined with the operators $\I$ and $\I^R$. Rather, it means that $\Box$ is implicitly contained in the packaged operators $I$ and $I^R$; for instance, since $I\phi$ abbreviates $\neg\Box\phi\land\neg\Box\neg\phi$, the operator $I$ packages the connectives $\neg$, $\Box$ and $\land$.} 

To address the above issue, we assume the two accessibility relations to be such that one of them is an arbitrary subset of the other. As we will check, this will deal with the definability issue but retain many validities in the case when the two accessibility relations are the same.


\weg{\section{Definability of Rumsfeld ignorance in $\mathcal{L}(\I)$}

\begin{center}
    $\mathcal{S}4\vDash\I^R_a\phi\lra \I_a\I_a\phi$
\end{center}

\begin{center}
    $\mathcal{S}4\vDash\I^R_a\phi\lra \I_a\phi\land\I_a\I_a\phi$
\end{center}

\begin{center}
    $\vDash\I^R_a\phi\lra \I_a\phi\land (\I_a\I_a\phi\vee\I_a(\phi\to\I_a\phi))$
\end{center}

The following result indicates that on the class of all Kripke frames, Rumsfeld ignorance is definable in terms of ignorance.
\begin{proposition}
$$\vDash\I^R_a\phi\lra\I_a\phi\land (\I_a\I_a\phi\vee\I_a(\phi\to\I_a\phi)).$$
\end{proposition}

\begin{proof}
Suppose for any Kripke model $\M=\lr{S,\{R_a\mid a\in\Ag\},V}$ that $\M,s\vDash \I^R_a\phi$. By semantics of $\I^R_a\phi$, it is obvious that $\M,s\vDash\I_a\phi$ and there exists $t$ such that $sRt$ and $\M,t\nvDash\I_a\phi$. From $s\vDash\I_a\phi$, it follows that there are $u,v$ such that $sR_au$, $sR_av$ and $\M,u\vDash\phi$ and $\M,v\nvDash\phi$. Assume towards contradiction that $\M,s\nvDash\I_a\I_a\phi\vee\I_a(\phi\to\I_a\phi)$. Since $\M,v\nvDash\phi$, we have $\M,v\vDash\phi\to\I_a\phi$. Using this and $s\nvDash\I_a(\phi\to\I_a\phi)$ and $sR_au$ and $sR_av$, we infer that $\M,u\vDash\phi\to\I_a\phi$, thus $\M,u\vDash\I_a\phi$. From this and $\M,t\nvDash\I_a\phi$ and $sR_at$ and $sR_au$, we obtain that $\M,s\vDash\I_a\I_a\phi$: a contradiction.

Conversely, suppose that $\M,s\vDash\I_a\phi\land (\I_a\I_a\phi\vee\I_a(\phi\to\I_a\phi))$, to show that $\M,s\vDash\I^R_a\phi$. By semantics, it suffices to show that there is a $t$ such that $sR_at$ and $\M,t\nvDash\I_a\phi$. We consider two cases. If $\M,s\vDash \I_a\I_a\phi$, by semantics of $\I_a$, we are done; if $\M,s\vDash\I_a(\phi\to\I_a\phi)$, then there exists $t$ such that $sRt$ and $\M,t\nvDash\phi\to\I_a\phi$, and so $\M,t\nvDash\I_a\phi$, as desired.
\end{proof}
}

We say that $\M=\lr{S,R,R^{\bullet},V}$ is a {\em bi-model}, if $S$ is a nonempty set of states (or points), $V$ is a valuation, and $R$ and $R^{\bullet}$ are accessibility relations for the implicit $\Box$ operator contained in $\I$ and $\I^R$, respectively. A {\em bi-frame} is a bi-model without valuations. A bi-frame $\mathcal{F}=\lr{S,R,R^\bullet}$ is a $P$ bi-frame, if $R$ and $R^\bullet$ both satisfy the property $P$; a bi-model $\M$ is a $P$ bi-model, if its underlying frame is a $P$ bi-frame. Following Fine~\cite{Fine:2018}, in the sequel we will consider additional properties such as reflexivity and transitivity. To distinguish the new semantics from the previous semantics, we call our semantics `bi-semantics' correspondingly. In what follows, we will focus on the bi-semantics.

\begin{definition}[Bi-semantics]\label{def.semantics-new}
Given a bi-model $\M=\lr{S,R,R^{\bullet},V}$ and a state $s\in S$, the bi-semantics of $\IRI$ is defined recursively as follows.
\[
\begin{array}{|lll|}
\hline
    \M,s\vDash p & \text{iff} & s\in V(p)\\
    \M,s\vDash\neg\phi & \text{iff} & \M,s\nvDash\phi\\
    \M,s\vDash\phi\land\psi&\text{iff} & \M,s\vDash\phi\text{ and }\M,s\vDash\psi\\
    \M,s\vDash\I\phi &\text{iff}& \text{for some }t\text{ such that }sRt\text{ and }\M,t\vDash\phi,\text{ and }\\
    &&\text{for some }u\text{ such that }sRu\text{ and }\M,u\nvDash\phi\\
    \M,s\vDash\I^R\phi & \text{iff}& \M,s\vDash\I\phi\text{ and for some }t\text{ such that }sR^{\bullet}t\text{ and }\M,t\nvDash \I\phi \\
\hline
\end{array}
\]
\end{definition}

We say that $\phi$ is {\em valid on a class of bi-frames $F$}, notation: $F\vDash\phi$, if for all bi-frames $\mathcal{F}$ in $F$, for all bi-models $\M$ based on $\mathcal{F}$, for all $s$ in $\M$, we have $\M,s\vDash\phi$. We say that $\phi$ is valid under the bi-semantics, if $\phi$ is valid on the class of all bi-frames. Given $\Gamma\subseteq\IRI$, we say that $\M,s\vDash\Gamma$, if for all $\phi\in\Gamma$ we have $\M,s\vDash\phi$. We say $\phi$ is {\em satisfiable} under the bi-semantics, if there exists a pointed bi-model $(\M,s)$ such that $\M,s\vDash\phi$. We say that $\phi$ is a {\em logical consequence} of $\Gamma$ over a class of bi-frames $F$, notation: $\Gamma\vDash_F\phi$, if for all bi-frames $\mathcal{F}$, for all bi-models $\M$ based on $\mathcal{F}$, for all $s$ in $\M$, if $\M,s\vDash\Gamma$, then $\M,s\vDash\phi$.

Recall that under the previous standard semantics (namely, Def.~\ref{def.semantics}), $\I\phi\land \I(\I\phi\vee\chi)\to\I^R\phi$ is valid, and $\dfrac{\psi\to\I\phi}{\I^R\phi\to\I\psi\lor\I(\psi\vee\phi)}$ is validity-preserving (Prop.~\ref{prop.validities-semantics}). In contrast, this is {\em not} the case under the bi-semantics.

\begin{proposition}\label{prop.bimodel-validities} \ 
\begin{itemize}
\item[(i)] On the class of all bi-frames satisfying the condition $R\subseteq R^{\bullet}$, $\I\phi\land \I(\I\phi\vee\chi)\to\I^R\phi$ is valid, but $\dfrac{\psi\to\I\phi}{\I^R\phi\to\I\psi\lor\I(\psi\vee\phi)}$ is {\em not} validity-preserving. 
\item[(ii)] On the class of all bi-frames satisfying the condition $R^{\bullet}\subseteq R$, $\dfrac{\psi\to\I\phi}{\I^R\phi\to\I\psi\lor\I(\psi\vee\phi)}$ is validity-preserving, but $\I\phi\land \I(\I\phi\vee\chi)\to\I^R\phi$ is {\em not} valid. 
\end{itemize}
\end{proposition}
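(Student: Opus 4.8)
The plan is to prove each of the two items by establishing the positive claim via a semantic argument and refuting the negative claim with an explicit small bi-model. For item (i), the positive part asserts that $\I\phi\land \I(\I\phi\vee\chi)\to\I^R\phi$ is valid on bi-frames with $R\subseteq R^\bullet$. I would fix such a bi-model $\M=\lr{S,R,R^\bullet,V}$ and $s\in S$, assume $\M,s\vDash\I\phi$ and $\M,s\vDash\I(\I\phi\vee\chi)$. From the second conjunct there is a $u$ with $sRu$ and $\M,u\nvDash\I\phi\vee\chi$, hence in particular $\M,u\nvDash\I\phi$. Since $R\subseteq R^\bullet$ we get $sR^\bullet u$, and together with $\M,s\vDash\I\phi$ this gives exactly the truth condition for $\I^R\phi$, so $\M,s\vDash\I^R\phi$. (Note this argument actually only needs the ``for some $u$ with $sRu$ and $\M,u\nvDash\I\phi\vee\chi$'' half of $\I(\I\phi\vee\chi)$, which is convenient.)

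For the negative part of item (i), I need a bi-model with $R\subseteq R^\bullet$, a formula instance $\psi\to\I\phi$ that is valid on the whole class, yet $\I^R\phi\to\I\psi\lor\I(\psi\vee\phi)$ fails at some point. The natural strategy is to choose $\psi$ to be a contradiction, say $\psi:=p\land\neg p$ (or $\I\phi\land\neg\I\phi$), so that $\psi\to\I\phi$ is trivially valid and $\I\psi$, $\I(\psi\vee\phi)$ reduce to $\I\bot$, $\I\phi$ respectively — then I just need a point where $\I^R\phi$ holds but neither $\I\bot$ (which is never satisfiable) nor $\I\phi$... wait, $\I^R\phi$ entails $\I\phi$, so that particular choice won't work. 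Instead I would take $\psi$ to be something like $\neg\I\phi$ is wrong too; the cleaner route is to pick $\psi$ so that $\psi\to\I\phi$ holds on the class but $\psi$ is still refutable where needed. A workable choice: let $\phi=p$ and $\psi=\I p$... again $\I p\to\I p$ trivial, and $\I(\psi\vee\phi)=\I(\I p\vee p)$; I want a bi-model where $\I^R p$ holds, $\I\I p$ fails, and $\I(\I p\vee p)$ fails. Concretely take three or four worlds: $s$ with $sRt$, $sRu$, $\M,t\vDash p$, $\M,u\nvDash p$ (giving $\I p$ at $s$), and $sR^\bullet v$ with $\M,v\nvDash\I p$ (giving $\I^R p$), while arranging $R$ from $s$ so that $\I p\vee p$ and $\I p$ are \emph{not} contingent across $R$-successors of $s$ (e.g.\ make every $R$-successor of $s$ satisfy $\I p\vee p$, killing $\I(\I p\vee p)$, and also satisfy or all fail $\I p$ uniformly, killing $\I\I p$). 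I would draw this out and verify $R\subseteq R^\bullet$ by adding the needed $R^\bullet$ edges; the design freedom here makes it routine once the constraints are written down.

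Item (ii) is the mirror image. For the positive part, I would show $\dfrac{\psi\to\I\phi}{\I^R\phi\to\I\psi\lor\I(\psi\vee\phi)}$ preserves validity on bi-frames with $R^\bullet\subseteq R$: suppose $\vDash\psi\to\I\phi$ on the class and $\M,s\vDash\I^R\phi$ for such a bi-model; then $\M,s\vDash\I\phi$ and there is $t$ with $sR^\bullet t$ and $\M,t\nvDash\I\phi$, hence by $R^\bullet\subseteq R$ also $sRt$; since $\psi\to\I\phi$ is valid, $\M,t\nvDash\psi$. Now split on whether $\M,s\vDash\I\psi$: if yes we are done; if not, then either all $R$-successors of $s$ satisfy $\psi$ or all fail $\psi$ — the former is impossible since $\M,t\nvDash\psi$ and $sRt$, so all $R$-successors of $s$ fail $\psi$, hence all fail $\psi\vee\phi$ unless they satisfy $\phi$; one then uses the two witnesses from $\M,s\vDash\I\phi$ (a $\phi$-successor and a $\neg\phi$-successor, both $R$-successors) to see that $\psi\vee\phi$ is true at the $\phi$-one and false at the $\neg\phi$-one, giving $\M,s\vDash\I(\psi\vee\phi)$. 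For the negative part of (ii) I need a bi-model with $R^\bullet\subseteq R$ refuting $\I\phi\land\I(\I\phi\vee\chi)\to\I^R\phi$: take $s$ where $\I\phi$ and $\I(\I\phi\vee\chi)$ hold (choosing $\chi$ freely, e.g.\ $\chi=\bot$, so I need $\I\phi$ and $\I\I\phi$ at $s$ via $R$) but $\I^R\phi$ fails, i.e.\ \emph{every} $R^\bullet$-successor of $s$ satisfies $\I\phi$; the easiest is to make $s$ have no $R^\bullet$-successors at all, which trivially satisfies $R^\bullet\subseteq R$ and falsifies the second conjunct of $\I^R\phi$, while $R$ is populated enough to satisfy $\I\phi\land\I\I\phi$ at $s$.

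The main obstacle is not the positive directions — those are short semantic unfoldings using the relevant inclusion $R\subseteq R^\bullet$ or $R^\bullet\subseteq R$ — but pinning down the two counterexample bi-models so that (a) the side condition of the rule is genuinely \emph{valid on the whole frame class} (not merely true at the point in question), and (b) all the required $\I$-subformulas have the correct contingency/non-contingency status at the relevant worlds while the frame respects the prescribed relational inclusion. I would handle (a) by taking the $\psi\to\I\phi$ instance to be a \emph{logical} validity (e.g.\ a substitution instance that is a theorem of propositional logic, or one already known from Prop.~\ref{prop.validities-semantics}), so that no frame-class reasoning is needed, and handle (b) by keeping the models to three or four worlds and tabulating the valuation explicitly.
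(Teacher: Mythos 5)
Your proposal is correct and follows essentially the same route as the paper: the same one-line semantic unfoldings for the two positive claims (using $R\subseteq R^\bullet$ resp.\ $R^\bullet\subseteq R$ exactly where you use them), the same instance $\psi:=\I\phi$ with a tautological premise $\I p\to\I p$ and a small countermodel refuting $\I^Rp\to\I\I p\vee\I(\I p\vee p)$ for the failure of the rule in (i), and the same empty-$R^\bullet$ trick for the failure of the axiom in (ii). The only loose end is cosmetic: in your (i)-countermodel the requirement that every $R$-successor of $s$ satisfies $\I p\vee p$ already forces the $\neg p$-successor, and hence (for $\I\I p$ to fail) both $R$-successors, to satisfy $\I p$, so only the ``all satisfy $\I p$'' branch of your disjunction is realizable --- which is precisely the four-world bi-model the paper exhibits.
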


\begin{proof}
let $\M=\lr{S,R,R^{\bullet},V}$ be an arbitrary bi-model and $s\in S$.

For (i), assume that $R\subseteq R^{\bullet}$. Suppose that $\M,s\vDash\I\phi\land \I(\I\phi\vee\chi)$, to show that $\M,s\vDash\I^R\phi$. By supposition that $\M,s\vDash\I(\I\phi\vee\chi)$, there exists $t$ such that $sRt$ and $t\nvDash\I\phi\vee\chi$, thus $t\nvDash\I\phi$. By assumption and $sRt$, we have $sR^{\bullet}t$. Therefore, $\M,s\vDash\I^R\phi$.

For the other part, since $\vDash\I\phi\to\I\phi$, it suffices to construct a bi-model such that $R\subseteq R^{\bullet}$ and $\nvDash\I^R\phi\to\I\I\phi\vee\I(\I\phi\vee\phi)$. Consider the following bi-model, where $R\subseteq R^\bullet$, and $x:\phi$ means that the world named $x$ satisfies the formula $\phi$. For simplicity, we omit the reference to $R^\bullet$, which holds on all arrows.
\weg{\[
\xymatrix{&&&t:p\\
\M&s:p\ar[urr]\ar[rr]|{R}\ar[drr]|{R} && u:p\ar[d]\ar@(ur,ul)|{R}\\
&&& v:\neg p\ar[u]_{R}\ar@(dl,dr)|{R}}
\]}
\[
\xymatrix{t:p&u:p\ar@(ur,ul)|{R}\ar[r]^R&v:\neg p\ar@(ur,ul)|{R}\ar[l]\\
&s:p\ar[ul]\ar[u]|R\ar[ur]|R&\\
}
\]
We can show that $\M,s\nvDash\I^Rp\to\I\I p\vee\I(\I p\vee p)$, as follows.
\begin{itemize}
\item $s\vDash\I^Rp$: since $u\vDash p$ and $v\nvDash p$ and $sRu,sRv$, we have $s\vDash\I p$; moreover, $sR^{\bullet}t$ and $t\nvDash\I p$.
\item $s\nvDash\I\I p$: this is because $R(s)=\{u,v\}$ and $u\vDash\I p$ and $v\vDash \I p$.
\item $s\nvDash\I(\I p\vee p)$: follows from $R(s)=\{u,v\}$ and $u\vDash \I p\vee p$ (as $u\vDash p$) and $v\vDash \I p\vee p$ (as $v\vDash \I p$).
\end{itemize}

For (ii), assume that $R^{\bullet}\subseteq R$ and $\vDash\psi\to\I\phi$, to show that $\vDash\I^R\phi\to\I\psi\lor\I(\psi\vee\phi)$. For this, suppose that $\M,s\vDash\I^R\phi$, it remains only to prove that $\M,s\vDash \I\psi\lor\I(\psi\vee\phi)$. If not, that is, $\M,s\nvDash\I\psi\lor\I(\psi\vee\phi)$, then by supposition, we infer that $\M,s\vDash\I\phi$ and for some $t$ such that $sR^{\bullet}t$ and $\M,t\nvDash\I\phi$. Since $s\vDash\I\phi$, there are $u$ and $v$ such that $sRu$ and $sRv$ and $\M,u\vDash\phi$ (thus $u\vDash\psi\vee\phi$) and $\M,v\nvDash\phi$. From $s\nvDash\I(\psi\vee\phi)$, $sRu$, $sRv$ and $\M,u\vDash\psi\vee\phi$, it follows that $\M,v\vDash\psi\vee\phi$, thus $\M,v\vDash\psi$. As $R^{\bullet}\subseteq R$ and $sR^{\bullet}t$, we get $sRt$. Due to the fact that $s\nvDash\I\psi$, we can derive that $\M,t\vDash\psi$, and thus $\M,t\nvDash\psi\to\I\phi$, contrary to the assumption.

For the invalidity part, consider the following bi-model, where $R^{\bullet}=\emptyset$ and the valuation at $x$ is arbitrary.
\weg{\[
\xymatrix{&&&y:p\\
\M&x\ar[urr]|R\ar[rr]|R&&z:\neg p\\}
\]}
\[
\xymatrix{
\M&y:p&&x\ar[ll]|R\ar[rr]|R&&z:\neg p\\}
\]
One may check that $\M,x\vDash\I p\land\I(\I p\vee p)$ but $x\nvDash\I^Rp$.
\end{proof}

\begin{remark}
Note that the invalidity in Prop.~\ref{prop.bimodel-validities}(ii) still holds even if $R^\bullet$ is serial. Consider the following bi-model, where we omit the reference to $R$, which holds on all arrows:
\[
\xymatrix{&&&y:p\ar@(ur,ul)|{R^\bullet}\ar[d]\\
\M&x\ar[urr]|{R^\bullet}\ar[rr]&&z:\neg p\ar@(dr,dl)|{R^\bullet}\\}
\]
One may easily verify that $\M,x\vDash \I p\land \I(\I p\vee p)$ but $x\nvDash\I^R p$.
\end{remark}

We have shown in Prop.~\ref{prop.definable-ribyi} that Rumsfeld ignorance is definable with ignorance whenever $R=R^{\bullet}$. It is natural to ask if Rumsfeld ignorance can be defined with ignorance in that way if either $R\subseteq R^{\bullet}$ or $R^{\bullet}\subseteq R$. The answer would be negative. In fact, when it is the case of either $R\subseteq R^{\bullet}$ or $R^{\bullet}\subseteq R$, Rumsfeld ignorance is undefinable with ignorance in any way.

In what follows, to show an operator $O$ is undefinable in a language $\mathcal{L}$ over a certain class of frames $F$, it suffices to find two pointed models based on frames in $F$, such that $\mathcal{L}$ cannot distinguish between the two pointed models but there exists a formula $\phi$ in $\mathcal{L}$ such that $O\phi$ can distinguish between them.

\weg{In general, the answer would be negative.
\begin{proposition}\
\begin{itemize}
\item[(i)] If $R\subseteq R^{\bullet}$, then $\vDash\I\phi\land (\I\I\phi\vee\I(\phi\to\I\phi))\to \I^R\phi$, but $\nvDash\I^R\phi\to \I\phi\land (\I\I\phi\vee\I(\phi\to\I\phi))$.
\item[(ii)] If $R^{\bullet}\subseteq R$, then $\vDash\I^R\phi\to \I\phi\land (\I\I\phi\vee\I(\phi\to\I\phi))$, but $\nvDash\I\phi\land (\I\I\phi\vee\I(\phi\to\I\phi))\to \I^R\phi$.
\end{itemize}
\end{proposition}

\begin{proof}
The validity parts can be shown as the corresponding proof in Prop.~\ref{prop.definable-ribyi}, with minor revisions. But with Prop.~\ref{prop.bimodel-validities} in hand, we can give a simpler proof.

For (i), let $R\subseteq R^{\bullet}$. By letting $\chi$ in Prop.~\ref{prop.bimodel-validities}(i) be $\bot$ and $\neg\phi$, respectively, we obtain that $\vDash \I\phi\land\I(\I\phi\vee\bot)\to \I^R\phi$ and $\vDash \I\phi\land\I(\I\phi\vee\neg\phi)\to\I^R\phi$. This is equivalent to $\vDash\I\phi\land\I\I\phi\to\I^R\phi$ and $\vDash\I\phi\land\I(\phi\to\I\phi)\to\I^R\phi$. Therefore $\vDash\I\phi\land (\I\I\phi\vee\I(\phi\to\I\phi))\to \I^R\phi$.

For the invalidity, let $R\subseteq R^{\bullet}$. We show that $\nvDash\I^R\neg p\to \I\neg p\land (\I\I\neg p\vee\I(\neg p\to\I\neg p))$. This is equivalent to $\nvDash\I^R p\to \I p\land (\I\I p\vee\I(\neg p\to\I p))$, which in turn amounts to showing $\nvDash\I^R p\to \I p\land (\I\I p\vee\I(\I p\vee p))$. In the proof of Prop.~\ref{prop.bimodel-validities}(i), we have shown that $\nvDash\I^R p\to \I\I p\vee\I(\I p\vee p)$, therefore $\nvDash\I^R p\to \I p\land (\I\I p\vee\I(\I p\vee p))$, as desired.
\weg{consider the following model, where $R\subseteq R^{\bullet}$:
\[
\xymatrix{&&t:p&u_1:p\\
\M&s:p\ar[ur]^{R^{\bullet}}\ar[r]^{R,R^{\bullet}}\ar[dr]_{R,R^{\bullet}}&u:p\ar[ur]^{R,R^{\bullet}}\ar[dr]_{R,R^{\bullet}}&\\
&&v:\neg p&u_2:\neg p\\}
\]
We have:
\begin{itemize}
\item $\M,s\vDash\I^Rp$. Firstly, because $sRu$ and $sRv$ and $u\vDash p$ and $v\nvDash p$, we infer $s\vDash \I p$; secondly, $sR^{\bullet}t$ and $t\nvDash \I p$.
\item $\M,s\nvDash\I\I p$. This is because $sRu$ and $sRv$ and $v\nvDash\I p$ and $u\vDash \I p$. To see the latter, just notice that $uRu_1$ and $uRu_2$ and $u_1\vDash p$ and $u_2\nvDash p$.
\item $\M,s\nvDash\I(p\to\I p)$. This is because $R(s)=\{u,v\}$ and $u\vDash p\to \I p$ (since $u\vDash \I p$) and $v\vDash p\to\I p$ (since $v\nvDash p$).
\end{itemize}
Therefore, $\M,s\nvDash \I^Rp\to\I\I p\vee\I(p\to\I p)$, and thus $\nvDash \I^Rp\to\I\I p\vee\I(p\to\I p)$.}

For (ii), let $R^{\bullet}\subseteq R$. From Prop.~\ref{prop.bimodel-validities}(ii), it follows that $\vDash\I^R\phi\to\I\I\phi\vee\I(\I\phi\vee\phi)$. By letting $\phi$ be $\neg\phi$, we infer that $\vDash\I^R\neg\phi\to\I\I\neg\phi\vee\I(\I\neg\phi\vee\neg\phi)$. This is equivalent to $\vDash\I^R\phi\to\I\I\phi\vee\I(\I\phi\vee\neg\phi)$, which is in turn equivalent to $\vDash\I^R\phi\to\I\I\phi\vee\I(\phi\to\I\phi)$. Moreover, by semantics of $\I^R$, it follows that $\vDash\I^R\phi\to\I\phi$. Therefore, $\vDash\I^R\phi\to \I\phi\land (\I\I\phi\vee\I(\phi\to\I\phi))$.
\weg{For (ii), let $\M=\lr{S,R,R^{\bullet},V}$ and $s\in S$ such that $R^{\bullet}\subseteq R$. Suppose that $\M,s\vDash \I^R\phi$. By semantics of $\I^R\phi$, it is obvious that $\M,s\vDash\I\phi$ and there exists $t$ such that $sR^{\bullet}t$ and $\M,t\nvDash\I\phi$. Since $R^{\bullet}\subseteq R$, we have $sRt$. From $s\vDash\I\phi$, it follows that there are $u,v$ such that $sRu$, $sRv$ and $\M,u\vDash\phi$ and $\M,v\nvDash\phi$. Assume towards contradiction that $\M,s\nvDash\I\I\phi\vee\I(\phi\to\I\phi)$. Since $\M,v\nvDash\phi$, we have $\M,v\vDash\phi\to\I\phi$. Using this and $s\nvDash\I(\phi\to\I\phi)$ and $sRu$ and $sRv$, we infer that $\M,u\vDash\phi\to\I\phi$, thus $\M,u\vDash\I\phi$. From this and $\M,t\nvDash\I\phi$ and $sRt$ and $sRu$, we obtain that $\M,s\vDash\I\I\phi$: a contradiction.}

For the invalidity, let $R^{\bullet}\subseteq R$. We show that $\nvDash\I\neg p\land (\I\I\neg p\vee\I(\neg p\to\I\neg p))\to \I^R\neg p$, which amounts to proving that $\nvDash\I p\land (\I\I p\vee\I(\I p\vee p))\to \I^R p$. In the proof of Prop.~\ref{prop.bimodel-validities}(ii), we have shown that $\nvDash\I p\land \I(\I p\vee p)\to \I^R p$, thus $\nvDash\I p\land (\I\I p\vee\I(\I p\vee p))\to \I^R p$, as desired.
\weg{consider the following model, where $R^{\bullet}=\emptyset$ (obviously $R^{\bullet}\subseteq R$):
\[
\xymatrix{&&t:p\ar[r]^R\ar[dr]_R&t_1:p\\
\M&s:p\ar[ur]^R\ar[r]_R&u:\neg p&t_2:\neg p\\}
\]
We have:
\begin{itemize}
\item $\M,s\vDash\I p$. This is because $sRt$ and $sRu$ and $t\vDash p$ and $u\nvDash p$.
\item $\M,s\vDash\I\I p$, thus $\M,s\vDash\I\I p\vee\I(p\to\I p)$. This is because $t\vDash \I p$ (since $tRt_1$ and $tRt_2$ and $t_1\vDash p$ and $t_2\nvDash p$) and $u\nvDash \I p$.
\item $\M,s\nvDash \I^Rp$. This is due to the fact that $R^{\bullet}=\emptyset$.
\end{itemize}}
\end{proof}}

\begin{proposition}
$\I^R$ is not definable in $\mathcal{L}(\I)$ over the class of all bi-frames satisfying the condition $R\subseteq R^{\bullet}$.
\end{proposition}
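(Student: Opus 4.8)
The plan is to apply the model-distinguishing method recalled just above the statement: I will exhibit two pointed bi-models, both based on bi-frames satisfying $R\subseteq R^{\bullet}$, that verify exactly the same formulas of $\mathcal{L}(\I)$ at the designated point yet disagree on the value of $\I^Rp$; since $\I^Rp\in\IRI$, this shows $\I^R$ cannot be simulated inside $\mathcal{L}(\I)$ over that bi-frame class.

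First I would record the following elementary fact, proved by a routine induction on formulas: the truth value of an $\mathcal{L}(\I)$-formula at a state of a bi-model depends only on $S$, $R$ and $V$, never on $R^{\bullet}$, because the semantic clauses for $p$, $\neg$, $\land$ and $\I$ do not mention $R^{\bullet}$. Hence two bi-models sharing the same $S$, $R$ and $V$ but possibly differing in $R^{\bullet}$ satisfy the same $\mathcal{L}(\I)$-formulas at every state.

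Next I would fix a base mono-structure $\lr{S,R,V}$ with $S=\{s,u,v,w\}$, where $R$ sends $s$ to $u$ and to $v$, where $uRu$, $uRv$, $vRu$, $vRv$ all hold, where $w$ is an $R$-isolated point (no $R$-edges into or out of $w$), and where $V(p)=\{u\}$. A direct check gives $\M,x\vDash\I p$ for each $x\in\{s,u,v\}$ (each such $x$ has $u$ as a $p$-successor and $v$ as a $\neg p$-successor), while $\M,w\nvDash\I p$ since $w$ has no $R$-successors at all. Now set $\M_1=\lr{S,R,R,V}$ and $\M_2=\lr{S,R,R\cup\{(s,w)\},V}$. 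Both underlying bi-frames satisfy $R\subseteq R^{\bullet}$, so both $(\M_1,s)$ and $(\M_2,s)$ are admissible, and by the fact above they verify exactly the same $\mathcal{L}(\I)$-formulas. On the other hand, in $\M_1$ the only $R^{\bullet}$-successors of $s$ are $u$ and $v$, at both of which $\I p$ holds, so (even though $\M_1,s\vDash\I p$) we get $\M_1,s\nvDash\I^Rp$; whereas in $\M_2$ we still have $\M_2,s\vDash\I p$ and now $sR^{\bullet}w$ with $\M_2,w\nvDash\I p$, so $\M_2,s\vDash\I^Rp$. Thus $\I^Rp$ separates the two pointed models while no $\mathcal{L}(\I)$-formula does, which is the desired conclusion.

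There is no serious obstacle in this argument; the only step requiring a little care is the design of the base structure. One must arrange that $\I p$ is true not merely at $s$ but at \emph{every} $R$-successor of $s$ — so that the minimal admissible choice $R^{\bullet}=R$ cannot witness $\I^Rp$ — while simultaneously providing a ``hidden'', $R$-unreachable state $w$ at which $\I p$ fails, so that enlarging $R^{\bullet}$ (while still respecting $R\subseteq R^{\bullet}$) does make $\I^Rp$ true. Everything else is a one-line induction and a finite model check.
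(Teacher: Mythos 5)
Your proof is correct, and it takes a genuinely different route to the key indistinguishability step. The paper exhibits two bi-models whose $R$-parts differ (the auxiliary world $v$ has $R$-successors in $\M$ but $v'$ has none in $\M'$) and then appeals to the $\Delta$-bisimulation machinery of Fan et al.\ to conclude that no $\mathcal{L}(\I)$-formula separates $(\M,s)$ from $(\M',s')$. You instead keep $S$, $R$ and $V$ fixed and vary only $R^{\bullet}$, so that $\mathcal{L}(\I)$-equivalence of $(\M_1,s)$ and $(\M_2,s)$ follows from the trivial induction showing that the bi-semantic clauses for atoms, $\neg$, $\land$ and $\I$ never consult $R^{\bullet}$; no bisimulation-invariance result is needed. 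Your finite check is also right: in $\M_1$ every $R^{\bullet}$-successor of $s$ (namely $u$ and $v$) satisfies $\I p$, so $\I^R p$ fails at $s$, while in $\M_2$ the added edge $(s,w)$ to the $R$-isolated world $w$, where $\I p$ fails, makes $\I^R p$ true; both bi-frames satisfy $R\subseteq R^{\bullet}$. What the paper's heavier tool buys is uniformity: the same $\Delta$-bisimulation argument is reused verbatim for the companion proposition about $R^{\bullet}\subseteq R$ and for the reflexive-closure remarks that follow each proof, whereas your ``$R^{\bullet}$-independence'' observation is specific to the fact that $\mathcal{L}(\I)$ ignores $R^{\bullet}$ entirely. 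For the present proposition that is exactly what is needed, and your argument is the more elementary and self-contained of the two (your construction also survives taking reflexive closures, so it would support the paper's follow-up remark as well).
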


\begin{proof}
Consider the following bi-models, where $R\subseteq R^\bullet$. For simplicity, we omit the reference to $R^\bullet$, which holds on all arrows.
$$
\xymatrix{&& v:p\ar@/^12pt/[d]|{R}\ar@/^22pt/[dd]|{R}&&&&v':p\\
\M& s:p\ar[ur]\ar[r]|{R}\ar[dr]|{R} & t:p\ar[d]^{R}\ar@(ur,ul)|{R}&&\M'&s':p\ar[ur]\ar[r]|{R}\ar[dr]|{R} & t':p\ar[d]^{R}\ar@(ur,ul)|{R}&\\
&& u:\neg p\ar[u]\ar@(dr,dl)|{R}&&&& u':\neg p\ar[u]\ar@(dr,dl)|{R}\\}
$$

Define $Z=\{(s,s'),(t,t'),(u,u')\}$. We can show that $Z$ is a $\I$-bisimulation, called `$\Delta$-bisimulation' in~\cite[Def.~3.3]{Fanetal:2014}.\footnote{Let $\M=\lr{S,R,V}$ be a model. A binary relation $Z$ is a {\em $\Delta$-bisimulation} on $\M$, if $Z$ is nonempty and whenever $sZs'$:
\begin{itemize}
    \item[(Var)] $s$ and $s'$ satisfy the same propositional variables;
    \item[($\Delta$-Zig)] if there are $t_1,t_2$ such that $sRt_1$ and $sRt_2$ and $(t_1,t_2)\notin Z$ and $sRt$ for some $t$, then there is a $t'$ such that $s'Rt'$ and $(t,t')\in Z$;
    \item[($\Delta$-Zag)] if there are $t_1',t_2'$ such that $s'Rt_1'$ and $s'Rt_2'$ and $(t_1',t_2')\notin Z$ and $s'Rt'$ for some $t'$, then there is a $t$ such that $sRt$ and $(t,t')\in Z$.
\end{itemize}
We say that $(\M,s)$ and $(\M',s')$ are {\em $\Delta$-bisimilar}, if there is a $\Delta$-bisimulation $Z$ in the disjoint union of $\M$ and $\M'$ such that $(s,s')\in Z$. It is shown in~\cite[Prop.~3.5]{Fanetal:2014} that $\mathcal{L}(I)$ is invariant under $\Delta$-bisimulation.} Then by ~\cite[Prop.~3.5]{Fanetal:2014}, $\mathcal{L}(I)$ cannot be distinguished by $(\M,s)$ and $(\M',s')$. 

On the other hand, $\I^Rp$ can distinguish between both pointed bi-models. To see this, note that $\M\vDash\I p$, which implies that $\M,s\nvDash \I^Rp$; however, since $s'\vDash \I p$ and $s'R^{\bullet}v'$ and $v'\nvDash\I p$, thus $\M',s'\vDash\I^Rp$.
\end{proof}

It is worth noting that if we take the reflexive closure of $\M$ and $\M'$ in the above proof, the undefinability result still holds. This means that $I^R$ is not definable in $\mathcal{L}(\I)$ over the class of all reflexive bi-frames satisfying the condition $R\subseteq R^\bullet$.

\begin{proposition}
$\I^R$ is not definable in $\mathcal{L}(\I)$ over the class of all bi-frames satisfying the condition $R^{\bullet}\subseteq R$.
\end{proposition}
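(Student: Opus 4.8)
The plan is to follow the template of the preceding proposition (the $R\subseteq R^{\bullet}$ case): I will exhibit two pointed bi-models, both based on bi-frames satisfying $R^{\bullet}\subseteq R$, that agree on every formula of $\mathcal{L}(\I)$ but are separated by $\I^R p$. Since $\mathcal{L}(\I)$ is invariant under $\Delta$-bisimulation (\cite[Prop.~3.5]{Fanetal:2014}), it is enough to make the two models $\Delta$-bisimilar with respect to their $\I$-accessibility relations $R$ (the $R^\bullet$-parts being irrelevant to $\mathcal{L}(\I)$), while engineering a mismatch in the truth value of $\I^R p$. This is exactly the recipe for showing undefinability stated just before the previous proposition.

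The asymmetry I exploit runs opposite to the previous proof: here it is cheap to \emph{falsify} $\I^R p$ at a point (take $R^{\bullet}=\emptyset$), and one must do a little work to \emph{verify} it (need $s\vDash\I p$ together with an $R^{\bullet}$-successor of $s$ — hence, since $R^{\bullet}\subseteq R$, also an $R$-successor — that falsifies $\I p$). Concretely I would take $\M$ on three worlds $s,u,w$ with $s,u\vDash p$ and $w\vDash\neg p$, with $sRu$, $sRw$, and $R^{\bullet}=\{(s,w)\}$. Then $R^{\bullet}\subseteq R$ holds, $s\vDash\I p$ via $u$ and $w$, and $w$ is a dead end so $w\nvDash\I p$; hence $\M,s\vDash\I^R p$. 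For $\M'$ I would copy the $R$-structure and the valuation on fresh worlds $s',u',w'$ but set $R^{\bullet}=\emptyset$; again $R^{\bullet}\subseteq R$, and $s'\vDash\I p$ while $\M',s'\nvDash\I^R p$ since $s'$ has no $R^{\bullet}$-successor. Put $Z=\{(s,s'),(u,u'),(w,w')\}$ in the disjoint union.

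What remains is the routine check that $Z$ is a $\Delta$-bisimulation between the $R$-reducts of $\M$ and $\M'$: (Var) holds because paired worlds satisfy the same propositional variables; the ($\Delta$-Zig) and ($\Delta$-Zag) clauses at $(s,s')$ hold because $s$ and $s'$ have precisely the matched $R$-successors $u,w$ and $u',w'$; and at the dead ends $u,w,u',w'$ both clauses are vacuous. Then $(\M,s)$ and $(\M',s')$ satisfy the same $\mathcal{L}(\I)$-formulas while $\I^R p$ distinguishes them, so $\I^R$ is not definable in $\mathcal{L}(\I)$ over the class of all bi-frames satisfying $R^{\bullet}\subseteq R$. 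I do not expect a genuine obstacle; the one delicate point is the interplay of the two relations, namely ensuring the $R^{\bullet}$-edge witnessing $\I^R p$ in $\M$ adds nothing to the $R$-structure that $\M'$ lacks — and it does not, precisely because $R^{\bullet}\subseteq R$ forces that edge to already be the $R$-edge $sRw$, with $w$ already an $R$-successor of $s$. One should also confirm both frames lie in the class (the second has $R^{\bullet}=\emptyset\subseteq R$), and, were the result wanted for a more constrained class such as the reflexive or serial bi-frames, one would apply the corresponding closure as in the remark following Prop.~\ref{prop.bimodel-validities}.
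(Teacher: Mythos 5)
Your proposal is correct and matches the paper's own proof essentially verbatim: the paper also uses a three-point model (root with one $p$-successor and one $\neg p$-successor, both dead ends under $R$), pairs it with a copy whose only difference is whether $R^{\bullet}$ contains the single edge from the root to the $\neg p$-world, and concludes via the $\Delta$-bisimulation $Z$ on the matched worlds together with \cite[Prop.~3.5]{Fanetal:2014} that $\mathcal{L}(\I)$ cannot separate the two pointed bi-models while $\I^R p$ does. The only cosmetic difference is that you swap which of the two models carries the nonempty $R^{\bullet}$.
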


\begin{proof}
Consider the following bi-models, where $R^{\bullet}\subseteq R$. For simplicity, we omit the reference to $R$, which holds on all arrows.
$$
\xymatrix{\M_1& t_1:p & s_1:p\ar[r]\ar[l] & u_1:\neg p\\
\M_2& t_2:p & s_2:p\ar[r]^{R^{\bullet}}\ar[l] & u_2:\neg p\\}
$$

Define $Z=\{(s_1,s_2),(t_1,t_2),(u_1,u_2)\}$. One may check that $Z$ is a $\I$-bisimulation, called `$\Delta$-bisimulation' in~\cite[Def.~3.3]{Fanetal:2014}. Then by ~\cite[Prop.~3.5]{Fanetal:2014}, $\mathcal{L}(I)$ cannot be distinguished by $(\M_1,s_1)$ and $(\M_2,s_2)$. However, these two pointed bi-models can be distinguished by a $\I^R$-formula $\I^Rp$, since $\M_1,s_1\nvDash\I^Rp$ and $\M_2,s_2\vDash\I^Rp$, as desired.
\end{proof}

Again, as one may check, if we take the reflexive closure of $\M$ and $\M'$ in the above proof, the undefinability result still holds. This means that $\I^R$ is not definable in $\mathcal{L}(\I)$ over the class of all reflexive bi-frames satisfying the condition $R^\bullet\subseteq R$.

\weg{Note that in the semantics of $\I^R_a\phi$, although $\I^R_a$ is `defined' with $\I_a$, $\I_a\phi$ is {\em not} a subformula of $\I^R_a\phi$. This brings about a technical difficulty in the completeness proof, as we shall see below.

For the sake of reference, we also introduce the operator $\Box_a$ mentioned in the introduction and interpret it as follows:
\[
\begin{array}{lll}
    \M,s\vDash\Box_a\phi & \text{iff} & \text{for all }t\text{ such that }sR_at,\M,t\vDash\phi. \\
     &
\end{array}
\]

Let $a\in\Ag$. If $R_a\subseteq R_a^K$, then $\vDash\neg\I^R_a\phi\land\I_a\phi\to\neg\I_a(\I_a\phi\vee\chi)$; if $R_a^K\subseteq R_a$, then $\vDash\psi\to\I_a\phi$ implies $\vDash\neg\I_a\psi\land\neg\I_a(\psi\vee\phi)\to\neg\I_a^R\phi$.}

\weg{\section{Expressivity}

\begin{proposition}
$\IRI$ is more expressive than $\LI$.
\end{proposition}

\begin{proof}
Consider the following models:
\[
\xymatrix{s:p\ar[r]\ar[dr]&u:p\\
&v:\neg p\\
&\M}
\qquad
\qquad
\xymatrix{s':p\ar[r]\ar[dr] & u':p\ar[r]\ar[dr] & x':p\\
& v':\neg  p\ar[r]\ar[ur] & y':\neg p\\
&\M'&}
\]

\end{proof}
}

In what follows, we focus on the restriction of $R\subseteq R^{\bullet}$, and call those bi-models (bi-frames) with this restriction `proper bi-models (resp. proper bi-frames)'. Moreover, we say that such a bi-model has a property, if both $R$ and $R^{\bullet}$ have that property. Such a restriction is based on the following reason. The notion of Rumsfeld ignorance, proposed in~\cite{Fine:2018}, is supposed to formalize the 2002 remark about {\em unknown unknowns} by the US Defense Secretary Donald Rumsfeld, ``But there are also unknown unknowns. These are things we don't know we don't know'', which is a counterexample to the usual axiom 5; in other words, one does not know something but one does not know {\em that} one does not know something.\footnote{In fact, before Kit Fine~\cite{Fine:2018}, Humberstone~\cite[p.~371]{Humberstone:2016} has a brief discussion about the remark of Donald Rumsfeld.}
Kit Fine uses `knowing whether' (i.e. the negation of (first-order) ignorance) to express this `know something', and uses `knowing that' (that is, propositional knowledge) to express this `know that'. Moreover, as Hintikka noted in~\cite{hintikka:1962}, `knowing whether'
is a notion weaker than `knowing that'. This corresponds to the semantic restriction that $R\subseteq R^\bullet$, where $R$ and $R^\bullet$ are, respectively, the accessibility relations for `knowing whether'/(first-order) ignorance and `knowing that'. 

\section{Minimal Logic}\label{sec.minimallogic}

This section proposes the minimal logic of $\IRI$ and shows its soundness with respect to the class of proper bi-frames.

\subsection{Proof system and Soundness}
\begin{definition}\label{def.minimallogic}
The minimal logic of $\IRI$, denoted $\mathbb{IRIK}$, consists of the following axioms and inference rules, where $n\in\mathbb{N}$.
\[
\begin{array}{ll}
    \text{TAUT} & \text{all instances of propositional tautologies} \\
    \text{I-Equ} & \I\phi\lra \I\neg\phi\\
    \text{IR-Equ} & \I^R\phi\lra \I^R\neg\phi\\
    \text{I-Con} & \I(\phi\land\psi)\to (\I\phi\vee\I\psi)\\
    \text{I-Dis} & \I(\phi\vee\psi)\land\I(\neg\phi\vee\chi)\to\I\phi\\
    \text{RI-I} & \I^R\phi\to \I\phi\\
    \text{MIX} & \I\phi\land \I (\I\phi\vee\chi)\to \I^R\phi\\
    \text{R-NI} & \dfrac{\phi}{\neg\I\phi}\\
    \text{RE-I} & \dfrac{\phi\lra \psi}{\I\phi\lra \I\psi}\\
    \text{RE-RI} & \dfrac{\phi\lra \psi}{\I^R\phi\lra \I^R\psi}\\
    \text{R-MIX}&\dfrac{\I\chi_1\land\cdots\land\I\chi_n\to\I\phi}{(\neg \I^R\chi_1\land \I\chi_1)\land\cdots\land (\neg \I^R\chi_n\land \I\chi_n)\to\neg \I^R\phi}\\
\end{array}
\]
\end{definition}





The axioms and inference rules concerning the operator $\I$ only are equivalent transformations of the corresponding ones concerning the operator $\Kw$, also denoted $\Delta$, in the literature, see e.g.~\cite{DBLP:journals/ndjfl/Kuhn95,Fanetal:2015}. Moreover, $\text{RI-I}$, $\text{MIX}$, and $\text{R-MIX}$ concern the interactions between Rumsfeld ignorance and first-order ignorance: $\text{RI-I}$ says that Rumsfeld ignorance implies first-order ignorance, $\text{MIX}$ tells us how to derive Rumsfeld ignorance; in particular, by letting $\chi$ be $\bot$, we obtain that ``first-order ignorance plus second-order ignorance implies Rumsfeld ignorance'', whereas R-MIX essentially says that if one is ignorant whether finitely many formulas implies  one is ignorant whether $\phi$, then knowing the ignorance of these formulas implies one is not Rumsfeld ignorance of $\phi$.

The notions of theorems, provability, and derivation are defined as usual.

\weg{\begin{proposition} The following rule is admissible: for all $n\in\mathbb{N}$,
$$\dfrac{\I\chi_1\land\cdots\land\I\chi_n\to\I\phi}{(\neg \I^R\chi_1\land \I\chi_1)\land\cdots\land (\neg \I^R\chi_n\land \I\chi_n)\to\neg \I^R\phi}.$$
\end{proposition}}



\weg{The following result will be used in the proof of truth lemma.

\begin{proposition}\label{prop.derivablerule}
The following rule is derivable in $\mathbb{IRIK}$: for all natural numbers $n\geq 1$,
$$\dfrac{\psi_1\land\cdots\land\psi_n\to\I_a\phi}{ \neg \I_a\psi_1\land \neg \I_a(\psi_1\vee\phi)\land\cdots\land \neg \I_a\psi_n\land \neg \I_a(\psi_n\vee\phi)\to \neg \I^R_a\phi}$$
\end{proposition}

\begin{proof}
By induction on $n\geq 1$.

The base case $n=1$ is straightforward by $\text{R-MIX}$.

For the inductive case, suppose by induction hypothesis (IH) that the statement holds for the case $n$, we need to consider the case $n+1$. We have the following proof sequence in $\mathbb{IRIK}$.
\[
\begin{array}{lll}
    (i) & \psi_1\land\cdots\land\psi_n\land\psi_{n+1}\to\I_a\phi & \text{premise} \\
    (ii) & \psi_1\land\cdots\land (\psi_n\land\psi_{n+1})\to\I_a\phi & (i)\\
    (iii) & \neg\I_a\psi_1\land\neg\I_a(\psi_1\vee\phi)\land \cdots\land\neg\I_a(\psi_n\land\psi_{n+1})\land\neg\I_a((\psi_n\land\psi_{n+1})\vee\phi)\\&\to\neg\I^R_a\phi & (ii),\text{IH}\\
    (iv) &  \neg\I_a\psi_n\land\neg\I_a\psi_{n+1}\to\neg\I_a(\psi_n\land\psi_{n+1}) & \text{I-Con}\\
    (v) & ((\psi_n\land\psi_{n+1})\vee\phi)\lra ((\psi_n\lor\phi)\land(\psi_{n+1}\lor\phi)) & \text{TAUT}\\
    (vi) & \I_a((\psi_n\land\psi_{n+1})\vee\phi)\lra\I_a ((\psi_n\lor\phi)\land(\psi_{n+1}\lor\phi)) & (v),\text{RE-I}\\
    (vii) & \I_a ((\psi_n\lor\phi)\land(\psi_{n+1}\lor\phi))\to \I_a (\psi_n\lor\phi)\lor\I_a(\psi_{n+1}\lor\phi) & \text{I-Con}\\
    (viii) & \neg\I_a (\psi_n\lor\phi)\land \neg\I_a(\psi_{n+1}\lor\phi)\to \neg\I_a((\psi_n\land\psi_{n+1})\vee\phi) & (vi),(vii)\\
    (ix) & \neg\I_a\psi_1\land\neg\I_a(\psi_1\vee\phi)\land \cdots\land\neg\I_a\psi_n\land\neg\I_a (\psi_n\lor\phi)\land\neg\I_a\psi_{n+1}\land\\ &\neg\I_a(\psi_{n+1}\lor\phi) \to\neg\I^R_a\phi & (iii),(iv),(viii)\\
\end{array}
\]
\end{proof}}

\begin{proposition}\label{prop.soundness-k}
$\mathbb{IRIK}$ is sound with respect to the class of all proper bi-frames. \end{proposition}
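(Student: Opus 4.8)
The plan is to show that every axiom in Definition~\ref{def.minimallogic} is valid on the class of proper bi-frames (i.e.\ those with $R\subseteq R^\bullet$), and that every inference rule preserves validity over that class. Soundness then follows by the usual induction on the length of derivations. Most of this is routine, so I would organise it into three groups.

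First I would dispatch the axioms and rules that mention only $\I$: namely TAUT, I-Equ, I-Con, I-Dis, R-NI, and RE-I. These are exactly (equivalent reformulations of) the standard axioms and rules of contingency/ignorance logic, whose soundness over the class of \emph{all} frames is classical (see~\cite{DBLP:journals/ndjfl/Kuhn95,Fanetal:2015}); since the bi-semantics interprets $\I$ using only $R$ exactly as in the mono-modal case, their validity on arbitrary bi-frames is immediate, and a fortiori on proper ones. I-Equ and RE-I use the symmetry of the clause for $\I$ in $\phi$/$\neg\phi$ and the fact that logically equivalent formulas are true at the same points; I-Con and I-Dis are the short semantic arguments about the existence/non-existence of $R$-successors satisfying a conjunction or disjunction. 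Likewise IR-Equ and RE-RI follow because the truth condition for $\I^R\phi$ refers to $\phi$ only through $\I\phi$, and $\I\phi\lra\I\neg\phi$ resp.\ $\I\phi\lra\I\psi$ already holds.

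Second, the two axioms coupling $\I^R$ with $\I$. RI-I ($\I^R\phi\to\I\phi$) is trivial from the first conjunct of the clause for $\I^R$. MIX ($\I\phi\land\I(\I\phi\vee\chi)\to\I^R\phi$) is precisely the validity proved in Proposition~\ref{prop.bimodel-validities}(i): if $\M,s\vDash\I(\I\phi\vee\chi)$ then some $t$ with $sRt$ has $\M,t\nvDash\I\phi\vee\chi$, hence $\M,t\nvDash\I\phi$; and here is the only place the hypothesis $R\subseteq R^\bullet$ is used, to upgrade $sRt$ to $sR^\bullet t$ and conclude $\M,s\vDash\I^R\phi$. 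So I would simply cite that proposition.

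Third, and this is the step I expect to be the real work, the rule R-MIX: from $\vdash\I\chi_1\land\cdots\land\I\chi_n\to\I\phi$ infer $\vdash(\neg\I^R\chi_1\land\I\chi_1)\land\cdots\land(\neg\I^R\chi_n\land\I\chi_n)\to\neg\I^R\phi$. Arguing by contraposition semantically: fix a proper bi-model $\M$ and state $s$ with $\M,s\vDash\I\chi_i$ for all $i$ and $\M,s\vDash\I^R\phi$; I must produce some $i$ with $\M,s\vDash\I^R\chi_i$. From $\M,s\vDash\I^R\phi$ we get $\M,s\vDash\I\phi$ and a witness $t$ with $sR^\bullet t$ and $\M,t\nvDash\I\phi$. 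By the validity of the premise (which, being a theorem, holds on all frames, hence at $t$), $\M,t\nvDash\I\chi_i$ for some $i$; since also $sR^\bullet t$, the pair $(s,t)$ already witnesses $\M,s\vDash\I^R\chi_i$ (the first conjunct $\M,s\vDash\I\chi_i$ being given). The subtlety to state carefully is that applying the premise at $t$ requires knowing it is valid on \emph{all} bi-frames, which holds because it is a theorem and we are doing the soundness induction — so R-MIX must be handled as a rule whose premise is already derivable, not merely semantically entailed; alternatively one observes that R-MIX preserves validity over proper bi-frames directly by the same contrapositive argument, which is what the induction actually needs. I would present it in this second, self-contained form, and note that $R\subseteq R^\bullet$ is not even needed for R-MIX itself.
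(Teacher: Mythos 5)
Your proposal is correct and follows essentially the same route as the paper: delegate the $\I$-only axioms and rules to the known soundness results for contingency/ignorance logic, read RI-I off the semantics, cite Proposition~\ref{prop.bimodel-validities}(i) for MIX, and verify that R-MIX preserves validity. The paper merely asserts that the R-MIX step is ``not hard to show''; your contrapositive argument (using the $R^\bullet$-witness $t$ for $\I^R\phi$ and the validity of the premise at $t$ to extract some $\I\chi_i$ false at $t$) supplies exactly the missing detail, and your observation that $R\subseteq R^\bullet$ is not needed there is accurate.
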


\begin{proof}
The validity of the axioms and inference rules only concerning the operator $\I$ can be found in~\cite[Prop.~4.2]{Fanetal:2015}. The validity of the axiom $\text{RI-I}$ is direct from the semantics of $\I^R$. The validity of the axiom $\text{MIX}$ can be found in Prop.~\ref{prop.bimodel-validities}(i). Moreover, it is not hard to show that the rule R-MIX preserves validity.
\end{proof}

\weg{\begin{proof}
The validity of the axioms and inference rules only concerning the operator $\I_a$ can be found in~\cite[Prop.~4.2]{Fanetal:2015}. The validity of the axiom $\text{RI-I}$ is direct from the semantics of $\I^R_a$. It remains only to prove the validity of the axiom $\text{MIX}$ and the inference rule $\text{R-MIX}$. For this, let $\M=\lr{S,\{R_a\mid a\in\Ag\},V}$ be an arbitrary model and $s\in S$.

Suppose that $\M,s\vDash\neg\I^R_a\phi\land\I_a\phi$, to show that $\M,s\vDash\neg\I_a(\I_a\phi\vee\chi)$. By supposition, we have for all $t$ such that $sR_at$, $\M,t\vDash I_a\phi$, thus $\M,t\vDash \I_a\phi\vee\chi$. Therefore, $\M,s\vDash\neg\I_a(\I_a\phi\vee\chi)$.

Assume that $\vDash\psi\to\I_a\phi$, to show that $\vDash\neg\I_a\psi\land \neg\I_a(\psi\vee\phi)\to\neg\I^R_a\phi$. Suppose that $\M,s\vDash \neg\I_a\psi\land \neg\I_a(\psi\vee\phi)$, it remains only to prove that $\M,s\vDash \neg\I^R_a\phi$. If not, that is, $\M,s\vDash\I^R_a\phi$, then $\M,s\vDash\I_a\phi$ and for some $t$ such that $sR_at$ and $\M,t\nvDash\I_a\phi$. From $\M,s\vDash\I_a\phi$, it follows that there are $u$ and $v$ such that $sR_au$ and $sR_av$ and $\M,u\vDash\phi$ and $\M,v\nvDash\phi$. Since $\M,u\vDash\phi$, we have $\M,u\vDash\psi\vee\phi$. This plus $\M,s\vDash\neg\I_a(\psi\vee\phi)$ implies that $\M,v\vDash \psi\vee\phi$, and thus $\M,v\vDash \psi$. Using $\M,s\vDash\neg\I_a\psi$, we can obtain that $\M,t\vDash\psi$, and then $\M,t\nvDash\psi\to\I_a\phi$, which contradicts the assumption. Therefore, $\M,s\vDash \neg\I^R_a\phi$, as desired.
\end{proof}}

\subsection{Completeness}

To show the completeness of $\mathbb{IRIK}$, we adopt a standard method of a canonical model construction, where we need to define suitable canonical relations. In the proof of the truth lemma, we need to consider two nontrivial cases $\I\phi$ and $\I^R\phi$. The hardest part is the case $\I^R\phi$, which involves the iteration of modalities. Instead, we treat $\I\phi$ in the semantics of $\I^R\phi$ `as a whole', but then we cannot use the inductive method on the {\em structure} of formulas, since $\I\phi$ is {\em not} a subformula of $\I^R\phi$. To solve this problem, we adopt an inductive method from~\cite{APAL2015}, which is also used in~\cite{ditmarschfan:2016,Fan:2016}.

To begin with, we introduce some notions. Intuitively, the size of a formula is about the number of propositional variables and primitive connectives contained in a formula.

\weg{\begin{definition}[Size, $\I^R$-depth] The {\em size} and {\em $\I^R$-depth} of $\IRI$-formulas $\phi$, denoted $S(\phi)$ and $d(\phi)$ respectively, are defined recursively as follows.
\[
\begin{array}{lllllll}
    S(p) & = & 1 && d(p) & = & 0 \\
    S(\neg\phi) & = & 1+S(\phi) && d(\neg\phi) & = & d(\phi)\\
    S(\phi\land\psi)&=&S(\phi)+S(\psi)+1 && d(\phi\land\psi)&=& \text{max}\{d(\phi),d(\psi)\}\\
    S(\I\phi)&=&1+S(\phi) && d(\I\phi)&=&d(\phi)\\
    S(\I^R\phi)&=&1+S(\phi) && d(\I^R\phi)&=&1+d(\phi)\\
\end{array}
\]
\end{definition}}

\begin{definition}[Size] The {\em size} of $\IRI$-formulas $\phi$, denoted $S(\phi)$, is defined recursively to be a positive integer as follows.
\[
\begin{array}{lll}
    S(p) & = & 1 \\
    S(\neg\phi) & = & 1+S(\phi)\\
    S(\phi\land\psi)&=&S(\phi)+S(\psi)+1\\
    S(\I\phi)&=&1+S(\phi)\\
    S(\I^R\phi)&=&1+S(\phi)\\
\end{array}
\]
\end{definition}

Intuitively, the $I^R$-depth of a formula is about the number of $I^R$ contained in the formula.
\begin{definition}[$\I^R$-depth] The {\em $\I^R$-depth} of $\IRI$-formulas $\phi$, denoted $d(\phi)$, is defined recursively to be a non-negative integer as follows.
\[
\begin{array}{lll}
    d(p) & = & 0 \\
    d(\neg\phi) & = & d(\phi)\\
    d(\phi\land\psi)&=& \text{max}\{d(\phi),d(\psi)\}\\
    d(\I\phi)&=&d(\phi)\\
    d(\I^R\phi)&=&1+d(\phi)\\
\end{array}
\]
\end{definition}

\begin{definition}[$<^S_d$]
We define the binary relation $<^S_d$ between formulas in the following way:
$$
\begin{array}{lll}
    \phi<^S_d\psi & \text{iff} & \text{either } d(\phi)<d(\psi),\text{ or } d(\phi)=d(\psi)\text{ and }S(\phi)<S(\psi).\\
     &
\end{array}$$
\end{definition}

It is obvious that $<^S_d$ is a well-founded strict partial order between formulas. Moreover, the following result is straightforward by definitions.
\begin{proposition}\label{prop.sd-induction}
Let $\phi,\psi\in\IRI$. Then
\[
\begin{array}{lllll}
    (a) & \phi<^S_d\neg\phi & & (b) & \phi<^S_d\phi\land\psi,~\psi<^S_d\phi\land\psi \\
    (c) & \phi<^S_d\I\phi && (d) & \I\phi<^S_d\I^R\phi\\
\end{array}
\]
\weg{\begin{itemize}
    \item[(a)] $\phi<^S_d\neg\phi$
    \item[(b)] $\phi<^S_d\phi\land\psi$, $\psi<^S_d\phi\land\psi$
    \item[(c)] $\phi<^S_d\I\phi$
    \item[(d)] $\I\phi<^S_d\I^R\phi$
\end{itemize}}
\end{proposition}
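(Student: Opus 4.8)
The plan is to check each of the four clauses by unwinding the definitions of $S$, $d$, and $<^S_d$, in every case first comparing the $\I^R$-depths and, only when these agree, comparing the sizes. Clauses (a), (c) and (d) need no case analysis. For (a), $d(\neg\phi)=d(\phi)$, so the depths coincide; since $S(\neg\phi)=1+S(\phi)>S(\phi)$, the second disjunct of the definition of $<^S_d$ yields $\phi<^S_d\neg\phi$. Clause (c) is identical in form, using $d(\I\phi)=d(\phi)$ and $S(\I\phi)=1+S(\phi)>S(\phi)$. Clause (d) is settled already at the level of depths: $d(\I\phi)=d(\phi)<1+d(\phi)=d(\I^R\phi)$, so the first disjunct applies and the sizes are irrelevant.

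The only clause calling for a (tiny) case split is (b), because of the $\max$ in $d(\phi\land\psi)$. Put $m=d(\phi\land\psi)=\max\{d(\phi),d(\psi)\}$. If $d(\phi)<m$ we are done by the first disjunct; if instead $d(\phi)=m$, then since every size is a positive integer we have $S(\phi)<S(\phi)+S(\psi)+1=S(\phi\land\psi)$, and the second disjunct gives $\phi<^S_d\phi\land\psi$. The argument for $\psi<^S_d\phi\land\psi$ is symmetric, exchanging the roles of $\phi$ and $\psi$.

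There is no genuine obstacle here: the statement reduces to the facts that $d$ is non-decreasing under $\neg$, $\land$, $\I$ (and strictly increasing under $\I^R$) while $S$ strictly increases under each of the connectives. The one point worth isolating is the positivity of $S$, which is exactly what forces the strict inequality on sizes in clause (b) in the boundary case where the two $\I^R$-depths happen to coincide; this is why the definition of $S$ was arranged to return a positive integer.
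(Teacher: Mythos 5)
Your proof is correct and matches the paper's intent exactly: the paper offers no argument beyond declaring the proposition ``straightforward by definitions,'' and your case-by-case unwinding of $S$, $d$, and $<^S_d$ is precisely that routine verification. (One tiny quibble: in clause (b) the strict inequality $S(\phi)<S(\phi)+S(\psi)+1$ already follows from $S(\psi)\geq 0$, so positivity of $S$ is sufficient but not strictly necessary there.)
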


\weg{Now we are ready to construct the canonical model for $\mathbb{IRIK}$, in which the definition of the canonical relations comes from~\cite{Fanetal:2014,Fanetal:2015}.

\begin{definition}\label{def.cm-k}
$\M^c=\lr{S^c,\{R^c_a\mid a\in\Ag\},V^c}$ is the {\em canonical model} for $\mathbb{IRIK}$, if
\begin{itemize}
\item $S^c=\{s\mid s\text{ is a maximal consistent set for }\mathbb{IRIK}\}$.
\item for each $a\in\Ag$, $sR^c_at$ iff there exists $\chi$ such that (a) $\I_a\chi\in s$ and (b) for all $\phi$, if $\neg \I_a\phi\land \neg \I_a(\phi\vee\chi)\in s$, then $\phi\in t$.
\item for each $p\in \BP$, $V^c(p)=\{s\in S^c\mid p\in s\}$.
\end{itemize}
\end{definition}

Lindenbaum's Lemma can be shown in the standard way.
\begin{lemma}[Lindenbaum's Lemma]
Every $\mathbb{IRIK}$-consistent set of formulas can be extended to a maximal $\mathbb{IRIK}$-consistent set.
\end{lemma}

\begin{lemma}[Truth Lemma]\label{lem.truthlemma-k}
For all $s\in S^c$, for all $\phi\in \IRI$, we have
$$\M^c,s\vDash\phi\iff \phi\in s.$$
\end{lemma}

\begin{proof}
By $<^S_d$-induction on $\phi$, where the only nontrivial cases are $\I_a\phi$ and $\I^R_a\phi$. The case $\I_a\phi$ is shown as in~\cite[Lemma~4.6]{Fanetal:2015} (note that $\phi<^S_d\I_a\phi$ by Prop.~\ref{prop.sd-induction}(c)). It remains only to treat the case $\I^R_a\phi$. By Prop.~\ref{prop.sd-induction}(d), $\I_a\phi<^S_d\I_a^R\phi$.

Suppose that $\I_a^R\phi\in s$, to prove that $\M^c,s\vDash \I^R_a\phi$, which by induction hypothesis amounts to saying that $\I_a\phi\in s$ and for some $t\in S^c$, $sR^c_at$ and $\I_a\phi\notin t$. $\I_a\phi\in s$ is straightforward by supposition and axiom $\text{RI-I}$. In what follows, we show that
\begin{center}
$(\ast)$ $\{\chi\mid \neg \I_a\chi\land \neg \I_a(\phi\vee \chi)\in s\}\cup\{\neg \I_a\phi\}$ is consistent.
\end{center}

If $(\ast)$ does not hold, then there are $\chi_1$, $\cdots$, $\chi_n$ such that $\neg \I_a\chi_m\land \neg \I_a(\phi\vee \chi_m)\in s$\footnote{Note that by $\text{R-NI}$, $\neg\I_a\top$ is provable. This guarantees that such a $\chi_m$ does exist.} for all $1\leq m\leq n$ and
$$\vdash \chi_1\land\cdots\land\chi_n\to \I_a\phi$$
Then by Prop.~\ref{prop.derivablerule},
$$\vdash \neg \I_a\chi_1\land \neg \I_a(\chi_1\vee\phi)\land\cdots\land \neg \I_a\chi_n\land \neg \I_a(\chi_n\vee\phi)\to \neg \I^R_a\phi.$$
and thus $\neg \I^R_a\phi\in s$, contradicting the supposition.

We have thus shown $(\ast)$. Then by Lindenbaum's Lemma, there is a $t\in S^c$ such that $\{\chi\mid \neg \I_a\chi\land \neg \I_a(\phi\vee \chi)\in s\}\cup\{\neg \I_a\phi\}\subseteq t$. By definition of $R^c$ and the consistency of $t$, we conclude that $sR^c_at$ and $\I_a\phi\notin t$.

Conversely, assume that $\I^R_a\phi\notin s$, to demonstrate that $\M^c,s\nvDash \I^R_a\phi$, which by induction hypothesis means that {\em either} $\I_a\phi\notin s$ {\em or} for all $t\in S^c$, if $sR^c_at$ then $\I_a\phi\in t$. Suppose that $\I_a\phi\in s$ and $sR^c_at$ for $t\in S^c$, it remains to show that $\I_a\phi\in t$.

By $sR^c_at$, there exists $\chi$ such that $\I_a\chi\in s$ and $(\star)$: for all $\psi$, if $\neg \I_a\psi\land \neg \I_a(\psi\vee\chi)\in s$, then $\psi\in t$. By assumption and supposition, we infer that $\neg \I^R_a\phi\land \I_a\phi\in s$. Then by axiom $\text{MIX}$, we derive that $\neg \I_a(\I_a\phi\vee\chi)\in s$; using axiom $\text{MIX}$ again and letting $\chi=\bot$, by $\text{RE-I}$, we can infer that $\neg \I_a\I_a\phi\in s$. Thus $\neg \I_a\I_a\phi\land \neg \I_a(\I_a\phi\vee\chi)\in s$. Now using $(\star)$, we conclude that $\I_a\phi\in t$, as desired.
\end{proof}

With Lindenbaum's Lemma and Truth Lemma in hand, it is then a standard exercise to show the following.
\begin{theorem}[Completeness]
$\mathbb{IRIK}$ is sound and strongly complete with respect to the class of all frames.
\end{theorem}}

Now we are ready to construct the canonical model for $\mathbb{IRIK}$, where $R^c$ is defined as in~\cite{DBLP:journals/ndjfl/Kuhn95}, and $R^{\bullet c}$ is inspired by the canonical relation in the minimal accident logic (see e.g.~\cite{Steinsvold:2008a}).
\begin{definition}\label{def.cm}
$\M^c=\lr{S^c,R^c,R^{\bullet c},V^c}$ is the {\em canonical model} for $\mathbb{IRIK}$, if
\begin{itemize}
\item $S^c=\{s\mid s\text{ is a maximal consistent set of formulas for }\mathbb{IRIK}\}$.
\item $sR^ct$ iff for all $\phi$, if $\neg\I(\phi\vee\psi)\in s$ for all $\psi$, then $\phi\in t$.
\item $sR^{\bullet c}t$ iff for all $\I\phi$, if $\neg\I^R\phi\land\I\phi\in s$, then $\I\phi\in t$.
\item for each $p\in \BP$, $V^c(p)=\{s\in S^c\mid p\in s\}$.
\end{itemize}
\end{definition}

\begin{proposition}
$\M^c$ is a proper model.
\end{proposition}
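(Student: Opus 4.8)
The claim is that the canonical model $\M^c = \lr{S^c, R^c, R^{\bullet c}, V^c}$ is a \emph{proper} model, i.e.\ that $R^c \subseteq R^{\bullet c}$. The plan is to argue directly from the two definitions of the canonical relations. So suppose $s, t \in S^c$ with $sR^ct$; we must show $sR^{\bullet c}t$, that is, for every formula of the form $\I\phi$, if $\neg\I^R\phi \land \I\phi \in s$ then $\I\phi \in t$.

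The key step is to exploit the axiom $\text{MIX}$ (namely $\I\phi \land \I(\I\phi \vee \chi) \to \I^R\phi$) together with its instance at $\chi = \bot$. Fix $\phi$ and assume $\neg\I^R\phi \land \I\phi \in s$. First, instantiating $\text{MIX}$ with $\chi := \bot$ and using $\text{RE-I}$ (since $\I\phi \vee \bot \lra \I\phi$), we get $\I\phi \land \I\I\phi \to \I^R\phi$; with $\I\phi \in s$ and $\neg\I^R\phi \in s$ this forces $\neg\I\I\phi \in s$, i.e.\ $\neg\I(\I\phi \vee \bot) \in s$. More generally, for an \emph{arbitrary} formula $\psi$, instantiate $\text{MIX}$ with $\chi := \psi$: from $\I\phi \land \I(\I\phi \vee \psi) \to \I^R\phi$ and $\I\phi, \neg\I^R\phi \in s$ we conclude $\neg\I(\I\phi \vee \psi) \in s$. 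Hence $\neg\I(\I\phi \vee \psi) \in s$ for \emph{every} $\psi$. By the definition of $R^c$, this is exactly the condition that licenses transferring the formula $\I\phi$ across $R^c$: since $sR^ct$, we obtain $\I\phi \in t$. As $\phi$ was arbitrary, $sR^{\bullet c}t$ holds, which completes the argument.

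I do not expect a genuine obstacle here: the whole point of including $\text{MIX}$ as an axiom (and of tailoring the definition of $R^c$ via the clause ``$\neg\I(\phi\vee\psi)\in s$ for all $\psi$'') is precisely to make this inclusion go through mechanically. The one small thing to be careful about is the quantifier structure: $R^c$ is defined so that $sR^ct$ requires $\phi \in t$ \emph{whenever} $\neg\I(\phi\vee\psi)\in s$ for \emph{all} $\psi$; so in applying it to the formula $\I\phi$ in place of the generic ``$\phi$'', one must genuinely check the universally-quantified hypothesis $\neg\I(\I\phi\vee\psi)\in s$ for every $\psi$ — which is exactly what the family of $\text{MIX}$ instances delivers. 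No appeal to Lindenbaum's Lemma or to maximal consistency beyond deductive closure of $s$ is needed for this particular proposition.
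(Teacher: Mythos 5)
Your proof is correct and is essentially identical to the paper's own argument: both fix $\I\phi$ with $\neg\I^R\phi\land\I\phi\in s$, use the family of $\text{MIX}$ instances to conclude $\neg\I(\I\phi\vee\psi)\in s$ for every $\psi$, and then apply the definition of $R^c$ to the formula $\I\phi$ to obtain $\I\phi\in t$. The preliminary detour through $\chi=\bot$ is harmless but unnecessary, since the general instance already covers it.
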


\begin{proof}
It suffices to show that $R^c\subseteq R^{\bullet c}$.

Suppose that $sR^ct$, to show that $sR^{\bullet c}t$. For this, assume for an arbitrary $\I\phi$ that $\neg\I^R\phi\land\I\phi\in s$. By axiom \text{MIX}, we obtain that for each $\psi$, $\neg\I(\I\phi\vee\psi)\in s$. Then using supposition, we infer that $\I\phi\in t$, as desired.
\end{proof}

Lindenbaum's Lemma can be shown in the standard way.
\begin{lemma}[Lindenbaum's Lemma]
Every $\mathbb{IRIK}$-consistent set of formulas can be extended to a maximal $\mathbb{IRIK}$-consistent set.
\end{lemma}

\begin{lemma}[Truth Lemma]\label{lem.truthlemma-k}
For all $s\in S^c$, for all $\phi\in \IRI$, we have
$$\M^c,s\vDash\phi\iff \phi\in s.$$
\end{lemma}

\begin{proof}
By $<^S_d$-induction on $\phi$, where the only nontrivial cases are $\I\phi$ and $\I^R\phi$. The case $\I\phi$ is shown as in~\cite[Lemma~2]{DBLP:journals/ndjfl/Kuhn95} (note that $\phi<^S_d\I\phi$ by Prop.~\ref{prop.sd-induction}(c)). It remains only to treat the case $\I^R\phi$. By Prop.~\ref{prop.sd-induction}(d), $\I\phi<^S_d\I^R\phi$.

Suppose that $\I^R\phi\in s$, to prove that $\M^c,s\vDash \I^R\phi$, which by induction hypothesis amounts to saying that $\I\phi\in s$ and for some $t\in S^c$, $sR^{\bullet c}t$ and $\I\phi\notin t$. $\I\phi\in s$ is straightforward by supposition and axiom $\text{RI-I}$. In what follows, we show that
\begin{center}
$(\ast)$ $\{\I\chi\mid \neg \I^R\chi\land \I\chi\in s\}\cup\{\neg \I\phi\}$ is consistent.
\end{center}

If $(\ast)$ does not hold, then there are $\chi_1$, $\cdots$, $\chi_n$ such that $\neg \I^R\chi_m\land \I\chi_m\in s$ for all $1\leq m\leq n$ and
$$\vdash \I\chi_1\land\cdots\land\I\chi_n\to\I\phi.$$
Using the rule R-MIX we obtain
$$\vdash (\neg \I^R\chi_1\land \I\chi_1)\land\cdots\land (\neg \I^R\chi_n\land \I\chi_n)\to\neg \I^R\phi.$$
Since $\neg \I^R\chi_m\land \I\chi_m\in s$ for all $1\leq m\leq n$, we derive that $\neg \I^R\phi\in s$, contrary to the supposition.

We have thus shown $(\ast)$. Then by Lindenbaum's Lemma, there is a $t\in S^c$ such that $\{\I\chi\mid \neg \I^R\chi\land \I\chi\in s\}\cup\{\neg \I\phi\}\subseteq t$. By definition of $R^{\bullet c}$ and the consistency of $t$, we conclude that $sR^{\bullet c}t$ and $\I\phi\notin t$.

Conversely, assume that $\I^R\phi\notin s$, to demonstrate that $\M^c,s\nvDash \I^R\phi$, which by induction hypothesis means that {\em either} $\I\phi\notin s$ {\em or} for all $t\in S^c$, if $sR^{\bullet c}t$ then $\I\phi\in t$. For this, suppose that $\I\phi\in s$, by assumption, we infer that $\neg\I^R\phi\land\I\phi\in s$. Then by the definition of $R^{\bullet c}$, we have that for all $t\in S^c$, if $sR^{\bullet c}t$, then $\I\phi\in t$, as desired.
\end{proof}

\weg{
\begin{proof}
By $<^S_d$-induction on $\phi$, where the only nontrivial cases are $\I_a\phi$ and $\I^R_a\phi$. The case $\I_a\phi$ is shown as in~\cite[Lemma~4.6]{Fanetal:2015} (note that $\phi<^S_d\I_a\phi$ by Prop.~\ref{prop.sd-induction}(c)). It remains only to treat the case $\I^R_a\phi$. By Prop.~\ref{prop.sd-induction}(d), $\I_a\phi<^S_d\I_a^R\phi$.

Suppose that $\I_a^R\phi\in s$, to prove that $\M^c,s\vDash \I^R_a\phi$, which by induction hypothesis amounts to saying that $\I_a\phi\in s$ and for some $t\in S^c$, $sR^{Kc}_at$ and $\I_a\phi\notin t$. $\I_a\phi\in s$ is straightforward by supposition and axiom $\text{RI-I}$. In what follows, we show that
\begin{center}
$(\ast)$ $\{\chi\mid \neg \I_a\chi\land \neg \I_a(\phi\vee \chi)\in s\}\cup\{\neg \I_a\phi\}$ is consistent.
\end{center}

If $(\ast)$ does not hold, then there are $\chi_1$, $\cdots$, $\chi_n$ such that $\neg \I_a\chi_m\land \neg \I_a(\phi\vee \chi_m)\in s$\footnote{Note that by $\text{R-NI}$, $\neg\I_a\top$ is provable. This guarantees that such a $\chi_m$ does exist.} for all $1\leq m\leq n$ and
$$\vdash \chi_1\land\cdots\land\chi_n\to \I_a\phi$$
Then by Prop.~\ref{prop.derivablerule},
$$\vdash \neg \I_a\chi_1\land \neg \I_a(\chi_1\vee\phi)\land\cdots\land \neg \I_a\chi_n\land \neg \I_a(\chi_n\vee\phi)\to \neg \I^R_a\phi.$$
and thus $\neg \I^R_a\phi\in s$, contradicting the supposition.

We have thus shown $(\ast)$. Then by Lindenbaum's Lemma, there is a $t\in S^c$ such that $\{\chi\mid \neg \I_a\chi\land \neg \I_a(\phi\vee \chi)\in s\}\cup\{\neg \I_a\phi\}\subseteq t$. By definition of $R^c$ and the consistency of $t$, we conclude that $sR^c_at$ and $\I_a\phi\notin t$.

Conversely, assume that $\I^R_a\phi\notin s$, to demonstrate that $\M^c,s\nvDash \I^R_a\phi$, which by induction hypothesis means that {\em either} $\I_a\phi\notin s$ {\em or} for all $t\in S^c$, if $sR^c_at$ then $\I_a\phi\in t$. Suppose that $\I_a\phi\in s$ and $sR^c_at$ for $t\in S^c$, it remains to show that $\I_a\phi\in t$.

By $sR^c_at$, there exists $\chi$ such that $\I_a\chi\in s$ and $(\star)$: for all $\psi$, if $\neg \I_a\psi\land \neg \I_a(\psi\vee\chi)\in s$, then $\psi\in t$. By assumption and supposition, we infer that $\neg \I^R_a\phi\land \I_a\phi\in s$. Then by axiom $\text{MIX}$, we derive that $\neg \I_a(\I_a\phi\vee\chi)\in s$; using axiom $\text{MIX}$ again and letting $\chi=\bot$, by $\text{RE-I}$, we can infer that $\neg \I_a\I_a\phi\in s$. Thus $\neg \I_a\I_a\phi\land \neg \I_a(\I_a\phi\vee\chi)\in s$. Now using $(\star)$, we conclude that $\I_a\phi\in t$, as desired.
\end{proof}}

With Lindenbaum's Lemma and Truth Lemma in hand, it is then a standard exercise to show the following.
\begin{theorem}[Completeness]\label{thm.k}
$\mathbb{IRIK}$ is sound and strongly complete with respect to the class of all proper bi-frames.
\end{theorem}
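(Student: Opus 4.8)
The plan is to obtain strong completeness from the canonical model already constructed in Definition~\ref{def.cm}, so that essentially all of the substantive work has been done before the statement we are proving: Soundness is Proposition~\ref{prop.soundness-k}; the canonical relations $R^c$ (the Kuhn-style relation for the ignorance fragment) and $R^{\bullet c}$ (the accident-logic-style relation) have been defined; the proposition that $\M^c$ is a proper model establishes $R^c\subseteq R^{\bullet c}$, so that $\M^c$ is based on a genuine proper bi-frame; Lindenbaum's Lemma holds in the usual way; and the Truth Lemma (Lemma~\ref{lem.truthlemma-k}) is in hand. What remains is the standard packaging of these facts.

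Concretely, for the completeness direction I would argue as follows. Let $\Gamma\subseteq\IRI$ be $\mathbb{IRIK}$-consistent. By Lindenbaum's Lemma extend $\Gamma$ to a maximal consistent set $s$, which by definition is a point of $\M^c$. By the Truth Lemma, $\M^c,s\vDash\psi$ for every $\psi\in s$, hence $\M^c,s\vDash\Gamma$; and since $\M^c$ is a proper model, its underlying bi-frame is a proper bi-frame, so $\Gamma$ is satisfiable on a proper bi-frame. To phrase this as strong completeness for the logical-consequence relation, suppose $\Gamma\nvdash\phi$; then $\Gamma\cup\{\neg\phi\}$ is $\mathbb{IRIK}$-consistent, hence satisfiable on some proper bi-model $(\M,w)$, which witnesses $\Gamma\nvDash_F\phi$, where $F$ is the class of all proper bi-frames. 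The converse direction, $\Gamma\vdash\phi\Rightarrow\Gamma\vDash_F\phi$, is a routine induction on the length of derivations: all axioms are valid on $F$ and all rules preserve validity on $F$ by Proposition~\ref{prop.soundness-k}, modus ponens preserves logical consequence, and one notes that the non-congruential rules (R-NI, RE-I, RE-RI, R-MIX) are applied only to theorems, so hypotheses drawn from $\Gamma$ cause no difficulty. Combining the two directions gives $\Gamma\vdash\phi\iff\Gamma\vDash_F\phi$.

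I do not expect any obstacle at the level of the theorem itself; the real difficulty of this section lies in the Truth Lemma, and specifically in its $\I^R$-case, which has already been discharged in the excerpt. The subtlety there is that $\I\phi$ is not a subformula of $\I^R\phi$, so the induction is run on the well-founded order $<^S_d$ (using $\I\phi<^S_d\I^R\phi$ from Proposition~\ref{prop.sd-induction}(d)) rather than on subformula complexity, and the rule R-MIX is precisely what is needed to prove the consistency claim $(\ast)$ that, via Lindenbaum's Lemma, yields the required successor $t$ with $sR^{\bullet c}t$ and $\I\phi\notin t$. The only two points in the wrap-up deserving a moment's care are: first, checking that the canonical bi-model actually satisfies $R^c\subseteq R^{\bullet c}$, so that it lies in the target class (this is the proposition immediately preceding the theorem, proved using axiom MIX); and second, the bookkeeping just mentioned that secures strong rather than merely weak completeness.
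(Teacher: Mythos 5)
Your proposal is correct and matches the paper's intent exactly: the paper itself dismisses this theorem as ``a standard exercise'' once Lindenbaum's Lemma, the Truth Lemma, and the properness of $\M^c$ are in hand, and your write-up simply supplies the routine packaging (consistent set $\Rightarrow$ MCS $\Rightarrow$ satisfied at a point of the proper canonical bi-model, plus the soundness direction from Prop.~\ref{prop.soundness-k}). Your added care about the non-congruential rules being applied only to theorems is a sound, if unstated in the paper, bookkeeping point.
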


\section{Extensions}\label{sec.extensions}

With the minimal logic in hand, it is natural to consider the extensions of $\mathbb{IRIK}$ over five basic class of frames (that is, serial, reflexive, transitive, symmetric, and Euclidean).
In this section, we axiomatize $\IRI$ over the class of serial frames and the class of reflexive frames. Other extensions are left for future work.

\subsection{Serial logic}

We now show that $\mathbb{IRIK}$ is also sound and strongly complete with respect to the class of serial proper bi-frames.

\begin{theorem}
$\mathbb{IRIK}$ is sound and strongly complete with respect to the class of serial proper bi-frames.\footnote{An anonymous referee raised a question whether seriality of bi-frames can be expressed/defined in $\IRI$. As a reply, the answer is negative. This can be shown as in~\cite[Prop.~3.8]{Fanetal:2015}. Despite this, the minimal logic $\mathbb{IRIK}$ of $\IRI$ is sound and strongly complete with respect to the class of serial proper bi-frames. Note that this is not surprising. Another example is contingency logic, in which seriality is not expressed/definable, but the minimal logic $\mathbb{CL}$ is also sound and strongly complete with respect to the class of serial frames, see e.g.~\cite[Thm.~5.6]{Fanetal:2015}.}
\end{theorem}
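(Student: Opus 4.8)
The plan is to reduce everything to what has already been established for the class of all proper bi-frames. Soundness is immediate: every serial proper bi-frame is in particular a proper bi-frame, so validity on the larger class (Prop.~\ref{prop.soundness-k}) entails validity on the smaller one. For strong completeness it suffices, given a $\mathbb{IRIK}$-consistent set $\Gamma$, to exhibit a \emph{serial} proper bi-model with a state satisfying $\Gamma$. I would obtain such a model by surgically repairing the canonical model $\M^c$ of Def.~\ref{def.cm} at its dead ends, without changing the logic.

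Concretely, put $D=\{s\in S^c\mid R^c(s)=\emptyset\}$ and define $\M^c_\ast=\lr{S^c,R^c_\ast,R^{\bullet c}_\ast,V^c}$ where $R^c_\ast=R^c\cup\{(s,s)\mid s\in D\}$ and $R^{\bullet c}_\ast=R^{\bullet c}\cup\{(s,s)\mid s\in D\}$. The first step is to check that $\M^c_\ast$ is a serial proper bi-model. Properness $R^c_\ast\subseteq R^{\bullet c}_\ast$ holds because $R^c\subseteq R^{\bullet c}$ already in $\M^c$ and the new reflexive pairs are added to \emph{both} relations; $R^c_\ast$ is serial since states outside $D$ already have an $R^c$-successor while states in $D$ now loop; and $R^{\bullet c}_\ast$ is serial because $R^c\subseteq R^{\bullet c}$ forces every $R^{\bullet c}$-dead end to lie in $D$ and hence to receive a loop.

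The core step is a truth lemma for $\M^c_\ast$. I would prove it by first showing the sharper claim that $\M^c_\ast$ and $\M^c$ satisfy exactly the same $\IRI$-formulas at every state, by $<^S_d$-induction on the formula as in Lemma~\ref{lem.truthlemma-k}, and then combining this with Lemma~\ref{lem.truthlemma-k} itself. At every state outside $D$ the two models have identical $R^c$- and $R^{\bullet c}$-successor sets, so the inductive steps for $\I\phi$ and $\I^R\phi$ there are immediate from the induction hypothesis, the only subtlety being that treating $\I^R\phi$ requires invoking the hypothesis on the non-subformula $\I\phi$, which is licensed by $\I\phi<^S_d\I^R\phi$ (Prop.~\ref{prop.sd-induction}(d)). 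At a state $s\in D$ the canonical model has no $R^c$-successor, so $\M^c,s\nvDash\I\phi$ and hence $\M^c,s\nvDash\I^R\phi$ for every $\phi$; and in $\M^c_\ast$ the sole $R^c_\ast$-successor of such an $s$ is $s$ itself, so no formula $\I\phi$ can hold at $s$ (a single successor cannot both verify and falsify $\phi$), and therefore no $\I^R\phi$ holds there either — so the two models still agree at $s$. Taking $s_0\supseteq\Gamma$ a maximal consistent set via Lindenbaum's Lemma, we get $\M^c_\ast,s_0\vDash\Gamma$ with $\M^c_\ast$ serial and proper, as required.

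The step I expect to be the main obstacle — though it is more a matter of care than of depth — is getting the bookkeeping of the repair right: the new reflexive arrows must be added to $R^{\bullet c}$ at exactly the states of $D$ (and not merely at the $R^{\bullet c}$-dead ends), for otherwise properness $R^c_\ast\subseteq R^{\bullet c}_\ast$ could be destroyed, while seriality of $R^{\bullet c}_\ast$ must still be secured. Once the observation that $R^c$-dead ends falsify every $\I\phi$ and every $\I^R\phi$ is isolated, the remaining verifications are routine.
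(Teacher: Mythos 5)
Your proposal is correct and follows essentially the same route as the paper: reflexivize the $R^c$-dead ends of the canonical model and check that this preserves the truth of every formula (dead ends falsify every $\I\phi$, hence every $\I^R\phi$, in both models), then invoke the truth lemma and Lindenbaum. The only cosmetic difference is that you also add loops to $R^{\bullet c}$ at those states, which is harmless but in fact vacuous: the canonical $R^{\bullet c}$ of Def.~\ref{def.cm} is already reflexive (the defining condition is trivially satisfied for $t=s$), which is how the paper obtains seriality of $R^{\bullet c}$ and properness of the repaired model for free.
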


\begin{proof}
Define $\M^c$ as in Def.~\ref{def.cm}. Note that $R^{\bullet c}$ is serial (in fact, reflexive), but $R^c$ is not serial, since some states $u$ in $\M^c$ may have no $R^c$-successors. We call such states {\em endpoints} w.r.t. $R^c$. We fix this issue with a strategy called `reflexivizing the endpoints'~\cite{Humberstone95,Fanetal:2015}, namely, adding an arrow $R^c$ from each endpoint in $\M^c$ to itself. Formally, $R^{\bf D}=R^c\cup\{(u,u)\mid u\text{ is an endpoint w.r.t. }R^c\text{ in }\M^c\}$. Denote the resultant model as $\M^{\bf D}=\lr{S^c,R^{\bf D},R^{\bullet c},V^c}$. Now $\M^{\bf D}$ is serial. Moreover, for all $\phi\in\IRI$, for all $s\in S^c$, we have $\M^c,s\vDash\phi$ iff $\M^{\bf D},s\vDash\phi$. The nontrivial cases are $\I\phi$ and $\I^R\phi$. If $s$ is an endpoint w.r.t. $R^c$, we have $\M^c,s\nvDash\I\phi$ and $\M^{\bf D},s\nvDash\I\phi$, and thus $\M^c,s\nvDash\I^R\phi$ and $\M^{\bf D},s\nvDash \I^R\phi$; otherwise, the claim is clear. Also, one can see that $\M^{\bf D}$ is also proper, since $R^{\bullet c}$ is reflexive and thus $R^{\bf D}\subseteq R^{\bullet c}$.
\end{proof}

\subsection{Reflexive logic}

Adding the following axiom to $\mathbb{IRIK}$, we obtain the proof system $\mathbb{IRIT}$:
\[
\begin{array}{lr}
(\text{I-T}) & \phi\land\I(\phi\vee\psi)\to\I\phi\\
\end{array}
\]

We will show that $\mathbb{IRIT}$ is sound and strongly complete with respect to the class of reflexive proper bi-frames. For soundness, by Thm.~\ref{thm.k}, it suffices to show the validity of axiom (I-T) on such a class. As a matter of fact, (I-T) is valid on a larger class of frames, namely, the class of reflexive bi-frames (that means we do not restrict the frames to be proper).
\begin{proposition}
Axiom (I-T) is valid on the class of reflexive bi-frames.
\end{proposition}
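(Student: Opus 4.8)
The plan is to argue directly from the bi-semantics of $\I$ (Definition~\ref{def.semantics-new}), using only that $R$ is reflexive; the relation $R^{\bullet}$ plays no role, since $\I$ is interpreted via $R$ alone, and in particular properness ($R\subseteq R^{\bullet}$) is not needed. First I would fix a reflexive bi-frame $\mathcal{F}=\lr{S,R,R^{\bullet}}$, a bi-model $\M$ based on it, and a state $s$, and assume $\M,s\vDash\phi$ together with $\M,s\vDash\I(\phi\vee\psi)$. The goal is to exhibit the two witnesses required by the clause for $\I$: an $R$-successor of $s$ satisfying $\phi$, and an $R$-successor of $s$ falsifying $\phi$.

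For the falsifying witness, unpack $\M,s\vDash\I(\phi\vee\psi)$: by the semantic clause for $\I$ there is some $u$ with $sRu$ and $\M,u\nvDash\phi\vee\psi$, hence in particular $\M,u\nvDash\phi$. For the satisfying witness, use reflexivity of $R$: since $sRs$ and $\M,s\vDash\phi$ by hypothesis, $s$ itself is an $R$-successor of $s$ at which $\phi$ holds. Putting the two witnesses together yields $\M,s\vDash\I\phi$ by the semantic clause for $\I$, which is exactly what validity of (I-T) on this bi-frame requires; as $\mathcal{F}$ was an arbitrary reflexive bi-frame, the axiom is valid on the whole class, and a fortiori on the class of reflexive proper bi-frames.

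There is no genuine obstacle here; the one point worth flagging is that the positive witness must be obtained from reflexivity applied to the hypothesis $\phi$ at $s$, not from $\I(\phi\vee\psi)$ --- the latter only supplies a successor satisfying the disjunction $\phi\vee\psi$, which need not satisfy $\phi$. This is the same phenomenon that makes the reflexive contingency/ignorance axiom take the form $\phi\land\I(\phi\vee\psi)\to\I\phi$ rather than a naive $\phi\to(\I(\phi\vee\psi)\to\I\phi)$, and the argument is the exact analogue of the soundness of the corresponding $\nabla$-axiom in contingency logic (cf.~\cite{Fanetal:2015}).
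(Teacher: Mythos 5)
Your proof is correct and follows exactly the paper's own argument: extract the falsifying $R$-witness from $\I(\phi\vee\psi)$, and use reflexivity of $R$ together with $\M,s\vDash\phi$ to make $s$ itself the satisfying witness. The additional remarks on why properness is not needed and why the axiom takes this disjunctive form are accurate but not required.
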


\begin{proof}
Let $\M=\lr{S,R,R^{\bullet},V}$ be reflexive and $s\in S$. Suppose that $\M,s\vDash\phi\land\I(\phi\vee\psi)$, then $s\vDash\phi$ and there exists $t$ such that $sRt$ and $t\nvDash\phi\vee\psi$, then $t\nvDash\phi$. Since $R$ is reflexive, $sRs$. Therefore, $s\vDash\I\phi$.
\end{proof}

For completeness, again, define $\M^c$ as in Def.~\ref{def.cm}. Although $R^{\bullet c}$ is reflexive, $R^c$ is not, thus $\M^c$ is not reflexive. In this case, the strategy of `reflexivizing the endpoints' does not work, since it cannot guarantee {\em each} world to be reflexive. We need to take the reflexive closure of $R^c$.
\begin{definition}
The {\em canonical model} $\M^{\bf T}=\lr{S^c,R^{\bf T},R^{\bullet c},V^c}$ of $\mathbb{IRIT}$ is the same as $\M^c$ in Def.~\ref{def.cm}, except that $S^c$ consists of all maximal consistent sets of $\mathbb{IRIT}$, and that $R^{\bf T}$ is the reflexive closure of $R^c$.
\end{definition}

It is easy to check that $\M^{\bf T}$ is reflexive and proper.
\begin{lemma}
For all $s\in S^c$, for all $\phi\in\IRI$, we have
$$\M^{\bf T},s\vDash\phi\iff \phi\in s.$$
\end{lemma}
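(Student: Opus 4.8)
The plan is to proceed by $<^S_d$-induction on $\phi$, exactly as in the proof of Lemma~\ref{lem.truthlemma-k}, so that the Boolean cases are routine and only the two modal cases $\I\phi$ and $\I^R\phi$ require work. The key observation is that passing from $R^c$ to its reflexive closure $R^{\bf T}$ changes nothing about $R^{\bullet c}$, and the $\I^R$-clause of the bi-semantics refers only to $R^{\bullet c}$; hence the entire argument for the case $\I^R\phi$ — both the consistency claim $(\ast)$ built from R-MIX and the converse direction using the definition of $R^{\bullet c}$ — carries over verbatim from Lemma~\ref{lem.truthlemma-k}. So the only genuinely new case is $\I\phi$.

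For the case $\I\phi$ (using $\phi<^S_d\I\phi$ from Prop.~\ref{prop.sd-induction}(c), so the induction hypothesis applies to $\phi$), I would follow the standard completeness argument for reflexive contingency/ignorance logic, e.g. the treatment in~\cite{Fanetal:2015,Humberstone95}. Concretely: if $\I\phi\in s$, then by axioms I-Equ and the $\Delta$-style axioms one produces both an $R^{\bf T}$-successor of $s$ containing $\phi$ and one containing $\neg\phi$, so $\M^{\bf T},s\vDash\I\phi$ by IH. For the converse, suppose $\I\phi\notin s$; one must show $\M^{\bf T},s\nvDash\I\phi$, i.e. that $s$ cannot see both a $\phi$-world and a $\neg\phi$-world along $R^{\bf T}$. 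Here the reflexive edge at $s$ is exactly what is needed: axiom I-T guarantees that if $\phi\in s$ and $s$ has an $R^{\bf T}$-successor refuting $\phi$, then $\I\phi\in s$ already; combined with the analogous $R^c$-only reasoning for non-reflexive edges, one concludes that all $R^{\bf T}$-successors of $s$ agree with $s$ on $\phi$, so $\M^{\bf T},s\nvDash\I\phi$ by IH.

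The main obstacle — though it is a mild one — is bookkeeping: the definition of $R^c$ in Def.~\ref{def.cm} is the ``$\neg\I(\phi\vee\psi)$ for all $\psi$'' canonical relation of Kuhn, so when handling the reflexive closure one has to check carefully that adding the loop $(s,s)$ whenever $s$ is \emph{not} already $R^c$-reflexive does not create a new $\I\phi$-witness that would contradict $\I\phi\notin s$; this is precisely where axiom I-T is invoked, and one should verify that the side formula $\psi$ in the canonical clause can be chosen (or quantified over) so that the argument goes through. Once the case $\I\phi$ is settled, the remaining cases are either immediate (Booleans) or, for $\I^R\phi$, a transcription of the corresponding part of Lemma~\ref{lem.truthlemma-k}, since properness of $\M^{\bf T}$ has already been noted and $R^{\bullet c}$ is untouched by the reflexivization.
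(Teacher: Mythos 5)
Your proposal is correct and follows essentially the same route as the paper: the same $<^S_d$-induction, the same observation that the $\I^R\phi$ case and the right-to-left half of the $\I\phi$ case carry over from Lemma~\ref{lem.truthlemma-k} (using $R^c\subseteq R^{\bf T}$), and the same use of axiom I-T --- fed by the contrapositive of the definition of $R^c$, which supplies the side formula $\psi$ with $\I(\phi\vee\psi)\in s$ --- to handle the added reflexive loop in the left-to-right half of the $\I\phi$ case. The only cosmetic differences are that you state that half contrapositively, and that the paper makes explicit (via a `w.l.o.g.' appeal to I-Equ) the symmetric subcase where $s$ itself is the $\neg\phi$-witness rather than the $\phi$-witness.
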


\begin{proof}
By $<^S_d$-induction on $\phi$, where the only nontrivial cases are $\I\phi$ and $\I^R\phi$. The case $\I^R\phi$ can be shown as in Lemma~\ref{lem.truthlemma-k}. For the case $\I\phi$, the ``$\Longleftarrow$'' is similar to the proof of the corresponding part in Lemma~\ref{lem.truthlemma-k} (Note that $R^c\subseteq R^{\bf T}$), and the proof of the ``$\Longrightarrow$'' is as follows.

Suppose that $\M^{\bf T},s\vDash\I\phi$, to show that $\I\phi\in s$. By supposition and induction hypothesis, there are $t,u\in S^c$ such that $sR^{\bf T}t$ and $sR^{\bf T}u$ and $\phi\in t$ and $\phi\notin u$. We consider two cases.
\begin{itemize}
\item $s\neq t$ and $s\neq u$. Then $sR^ct$ and $sR^cu$, and the proof goes as in the corresponding part in Lemma~\ref{lem.truthlemma-k}. We can finally obtain that $\I\phi\in s$.
\item $s=t$ or $s=u$. W.l.o.g. we may assume that $s=t$ (thus $\phi\in s$). Then $s\neq u$, and thus $sR^cu$. From this and $\phi\notin u$, it follows that $\neg\I(\phi\vee\psi)\notin s$ for some $\psi$, and then $\I(\phi\vee\psi)\in s$. Now using axiom (I-T), we derive that $\I\phi\in s$, as desired.
\end{itemize}
\end{proof}

It is now a standard exercise to show the following.
\begin{theorem}\label{thm.comp-t}
$\mathbb{IRIT}$ is sound and strongly complete with respect to the class of reflexive proper bi-frames.
\end{theorem}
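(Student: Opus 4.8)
The plan is to establish soundness and strong completeness separately, in each case reducing everything to machinery already in place for $\mathbb{IRIK}$ and for the reflexive fragment. For soundness over the class $F$ of reflexive proper bi-frames, I would check that every axiom and rule of $\mathbb{IRIT}$ preserves validity on $F$. All axioms and rules carried over from $\mathbb{IRIK}$ are valid on the class of all proper bi-frames by Proposition~\ref{prop.soundness-k}, and since $F$ is a subclass of that, they remain valid on $F$. The only new ingredient is axiom (I-T), and the preceding proposition shows it is valid on the (larger) class of all reflexive bi-frames, hence a fortiori on $F$. A routine induction on the length of derivations then gives $\Gamma\vdash_{\mathbb{IRIT}}\phi$ implies $\Gamma\vDash_F\phi$.

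For strong completeness, I would run the canonical-model argument built around $\M^{\bf T}=\lr{S^c,R^{\bf T},R^{\bullet c},V^c}$. Suppose $\Gamma\nvdash_{\mathbb{IRIT}}\phi$; then $\Gamma\cup\{\neg\phi\}$ is $\mathbb{IRIT}$-consistent, so by Lindenbaum's Lemma (which goes through verbatim for $\mathbb{IRIT}$) it extends to a maximal $\mathbb{IRIT}$-consistent set $s\in S^c$. The Truth Lemma just established then yields $\M^{\bf T},s\vDash\Gamma$ and $\M^{\bf T},s\nvDash\phi$. Since $\M^{\bf T}$ has already been observed to be reflexive and proper, its underlying bi-frame lies in $F$, so $\Gamma\nvDash_F\phi$. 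Contraposing gives $\Gamma\vDash_F\phi\Rightarrow\Gamma\vdash_{\mathbb{IRIT}}\phi$, which is strong completeness; together with soundness this proves the theorem.

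I expect no genuine obstacle here, since the delicate work is already done: the $<^S_d$-induction handling $\I^R\phi$ even though $\I\phi$ is not its subformula (inherited from Lemma~\ref{lem.truthlemma-k}), the $\I\phi$ case of the Truth Lemma that invokes axiom (I-T) to absorb the reflexive-closure loops, and the verification that $R^{\bf T}\subseteq R^{\bullet c}$, which is immediate from $R^c\subseteq R^{\bullet c}$ together with reflexivity of $R^{\bullet c}$. If any point needs a second look, it is that passing from $R^c$ to its reflexive closure leaves the $\I^R\phi$ clause of the Truth Lemma intact — but that clause refers only to $R^{\bullet c}$, which is untouched, so it transfers unchanged. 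Assembling these observations is precisely the ``standard exercise'' the statement alludes to.
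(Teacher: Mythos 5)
Your proposal is correct and follows exactly the route the paper intends: soundness via Proposition~\ref{prop.soundness-k} plus the validity of (I-T) on reflexive bi-frames, and strong completeness via Lindenbaum's Lemma and the Truth Lemma for $\M^{\bf T}$ together with the observation that $\M^{\bf T}$ is reflexive and proper. The paper leaves this assembly as a ``standard exercise,'' and you have carried it out correctly, including the right remarks about why the $\I^R$ case and the inclusion $R^{\bf T}\subseteq R^{\bullet c}$ cause no trouble.
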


\weg{\subsection{Transitive logic}

Adding the following axioms to $\mathbb{IRIK}$, we obtain a proof system, denoted $\mathbb{IRIK}4$:
\[
\begin{array}{lr}
(\text{I-4}) & \I_a(\I_a\phi\land\psi)\to\I_a\phi\\
(\text{RI-4}) & \\
\end{array}
\]

\subsection{${\bf S4}$ logic}}


\weg{In this section, we give extensions of $\mathbb{IRIK}$ over various classes of frames, and show their soundness and completeness. In the table of Def.~\ref{def.ext}, we list extra axioms and corresponding systems, with on the right-hand side the frame classes for which we will demonstrate the soundness and completeness.

\begin{definition}[Extensions of $\mathbb{IRIK}$]\label{def.ext}
\[ \begin{array}{|l|l|l|l|}
  \hline
  \text{Notation}& \text{Axiom}& \text{Systems} & \text{Frames} \\
\hline
  && \mathbb{IRIK} & \text{seriality}\\
  \hline
  \text{I-T} & \phi\land\I(\phi\vee\psi)\to\I\phi & \mathbb{IRIT}=\mathbb{IRIK}+\text{I-T} & \text{reflexivity} \\
  \text{I-4} & \I_a(\I_a\phi\land\psi)\to\I_a\phi& \mathbb{IRIK}4=\mathbb{IRIK}+\text{I-4} & \text{transitivity}\\
  \text{I-5} & \I_a(\I_a\phi\lor\psi)\to\neg\I_a\phi&\mathbb{IRIK}5=\mathbb{IRIK}+\text{I-5} & \text{Euclidicity}\\
  \text{I-B} & \phi\to\neg\I_a((\neg\I_a\phi\land\neg\I_a(\phi\to\psi)\land\I_a\psi)\to\chi) & \mathbb{IRIB}=\mathbb{IRI}+\text{I-B} & \text{symmetry} \\
  \text{wI-4}  & \I_a\I_a\phi\to\I_a\phi &\mathbb{IRIS}4=\mathbb{IRIT}+\text{wI-4} & {\mathcal S}4\\
   \text{wI-5}   & \I_a\I_a\phi\to\neg\I_a\phi &\mathbb{IRIS}5=\mathbb{IRIT}+\text{wI-5} & {\mathcal S}5 \\
   \hline
   &&\mathbb{IRIK}45=\mathbb{IRIK}+\text{I-4}+\text{I-5}&\mathcal{K}45(\mathcal{KD}45)\\
  \hline
\end{array}
\]
\end{definition}

\weg{\begin{definition}[Extensions of $\mathbb{IRIK}$]\label{def.ext}
\[ \begin{array}{|l|l|l|l|}
  \hline
  \text{Notation}& \text{Axiom}& \text{Systems} & \text{Frames} \\
\hline
  && \mathbb{IRIK} & \text{seriality}\\
  \hline
  \text{I-T} & \I_a\psi\land\phi\to\I_a\phi\lor\I_a(\phi\to\psi)& \mathbb{IRIT}=\mathbb{IRIK}+\text{I-T} & \text{reflexivity} \\
  \text{I-4} & \I_a(\I_a\phi\land\psi)\to\I_a\phi& \mathbb{IRIK}4=\mathbb{IRIK}+\text{I-4} & \text{transitivity}\\
  \text{I-5} & \I_a(\I_a\phi\lor\psi)\to\neg\I_a\phi&\mathbb{IRIK}5=\mathbb{IRIK}+\text{I-5} & \text{Euclidicity}\\
  \text{I-B} & \phi\to\neg\I_a((\neg\I_a\phi\land\neg\I_a(\phi\to\psi)\land\I_a\psi)\to\chi) & \mathbb{IRIB}=\mathbb{IRI}+\text{I-B} & \text{symmetry} \\
  \text{wI-4}  & \I_a\I_a\phi\to\I_a\phi &\mathbb{IRIS}4=\mathbb{IRIT}+\text{wI-4} & {\mathcal S}4\\
   \text{wI-5}   & \I_a\I_a\phi\to\neg\I_a\phi &\mathbb{IRIS}5=\mathbb{IRIT}+\text{wI-5} & {\mathcal S}5 \\
   \hline
   &&\mathbb{IRIK}45=\mathbb{IRIK}+\text{I-4}+\text{I-5}&\mathcal{K}45(\mathcal{KD}45)\\
  \hline
\end{array}
\]
\end{definition}}

It may be worth remarking that the axiom $\text{wI-4}$ says that second-order ignorance implies first-order ignorance, which is proved in the modal system ${\bf S4}$ in~\cite[Lemma~3]{Fine:2018}, also see (1) in the Introduction. Moreover, the above extra axioms also arise in~\cite[Def.~5.1]{Fanetal:2015} in the setting of contingency logic, with different names.

\begin{theorem}\label{thm.comp-extensions}\
\begin{itemize}
    \item[(1)] $\mathbb{IRIK}$ is sound and strongly complete with respect to the class of serial frames.
    \item[(2)] $\mathbb{IRIT}$ is sound and strongly complete with respect to the class of reflexive frames.
    \item[(3)] $\mathbb{IRIK}4$ is sound and strongly complete with respect to the class of transitive frames.
    \item[(4)] $\mathbb{IRIK}5$ is sound and strongly complete with respect to the class of Euclidean frames.
    \item[(5)] $\mathbb{IRIB}$ is sound and strongly complete with respect to the class of symmetric frames.
    \item[(6)] $\mathbb{IRIK}45$ is sound and strongly complete with resepct to the class of $\mathcal{K}45$-frames.
    \item[(7)] $\mathbb{IRIK}45$ is sound and strongly complete with resepct to the class of $\mathcal{KD}45$-frames.
    \item[(8)] $\mathbb{IRIS}4$ is sound and strongly complete with respect to the class of $\mathcal{S}4$-frames.
    \item[(9)] $\mathbb{IRIS}5$ is sound and strongly complete with respect to the class of $\mathcal{S}5$-frames.
\end{itemize}
\end{theorem}

\begin{proof}
The soundness is straightforward from Prop.~\ref{prop.soundness-k} and~\cite[Prop.~5.2]{Fanetal:2015}.

For (1), define $\M^c$ as in Def.~\ref{def.cm-k}. Note that $\M^c$ is not necessarily serial, since some states $u$ in $\M^c$ may have no $R^c_a$-successors (e.g. $\I_a\phi\notin u$ for all $\phi$). We call such states {\em endpoints} w.r.t. $R^c_a$. We fix this issue with a strategy called `reflexivizing the endpoints'~\cite{Humberstone95,Fanetal:2015}, namely, give any $a\in\Ag$, add an arrow $R^c_a$ from each endpoint in $\M^c$ to itself. Formally, $R^{\bf D}_a=R^c_a\cup\{(u,u)\mid t\text{ is an endpoint w.r.t. }R^c_a\text{ in }\M^c\}$. Denote the resultant model as $\M^{\bf D}=\lr{S^c,\{R^{\bf D}_a\mid a\in\Ag\},V^c}$. Now $\M^{\bf D}$ is serial. Moreover, for all $\phi\in\IRI$, for all $s\in S^c$, we have $\M^c,s\vDash\phi$ iff $\M^{\bf D},s\vDash\phi$. The nontrivial cases are $\I_a\phi$ and $\I_a^R\phi$. If $s$ is not an endpoint w.r.t. $R^c_a$, then the claim is clear; otherwise, we have $\M^c,s\nvDash\I_a\phi$ and $\M^{\bf D},s\nvDash\I_a\phi$, and thus $\M^c,s\nvDash\I^R_a\phi$ and $\M^{\bf D},s\nvDash \I_a^R\phi$.

For (2), define $\M^c$ w.r.t. $\mathbb{IRIT}$ as in Def.~\ref{def.cm-k}. Note that $\M^c$ is not necessarily reflexive. We take the reflexive closure method. In detail, define the canonical model for $\mathbb{IRIT}$ $\M^{\bf T}=\lr{S^c,\{R^{\bf T}_a\mid a\in\Ag\},V^c}$ as $\M^c$, except that $R^{\bf T}_a$ is the reflexive closure of $R^c_a$. Now $\M^{\bf T}$ is reflexive. It remains only to show that for all $s\in S^c$, for all $\phi\in\IRI$, we have $\M^{\bf T},s\vDash\phi$ iff $\phi\in s$. The proof proceeds with a $<^S_d$-induction on $\phi$, where the nontrivial cases are $\I_a\phi$ and $\I^R_a\phi$. The case $\I_a\phi$ is shown as in~\cite[Lemma~5.8]{Fanetal:2015} (we also recall that in Prop.~\ref{prop.sd-induction}(c) that $\phi<^S_d\I_a\phi$). For the case $\I^R_a\phi$, the `if' part follows from the corresponding proof in Lemma~\ref{lem.truthlemma-k} and the fact that $R^c_a\subseteq R^{\bf T}_a$. For the converse, assume that $\I^R_a\phi\notin s$, to show {\em either} $I_a\phi\notin s$ {\em or} for all $t\in S^c$, if $sR^{\bf T}_at$, then $\I_a\phi\in t$. For this, suppose that $\I_a\phi\in s$ and $sR^{\bf T}_at$, then $s=t$ or $sR^c_at$. If $s=t$, then obviously $\I_a\phi\in t$; otherwise, that is, $sR^c_at$,  we continue the remaining steps in Lemma~\ref{lem.truthlemma-k} and can finally conclude that $\I_a\phi\in t$, as desired.

(3),(4),(6)-(9) are shown as in~\cite[Thm.~5.10-Thm.~5.15]{Fanetal:2015}. For (5), we refer to~\cite{fan2019symmetric} or~\cite{Fanetal:2015}.
\end{proof}}

\section{Applications}\label{sec.applications}

We now apply our framework to the results in~\cite{Fine:2018}. It turns out that most of the validities mentioned in the introduction are retained here. 

First, as remarked after Def.~\ref{def.minimallogic}, by letting $\chi$ in axiom MIX be $\bot$, we obtain the following result: first-order ignorance plus second-order ignorance implies Rumsfeld ignorance. In other words, if one is both (first-order) ignorant whether and second-order ignorant whether a proposition holds, then one is also Rumsfeld ignorant of that proposition. This indicates an interesting relationship among first-order ignorance, second-order ignorance and Rumsfeld ignorance.
\begin{proposition}\label{prop.provableink}
$\I\phi\land\I\I\phi\to\I^R\phi$ is provable in $\mathbb{IRIK}$.
\end{proposition}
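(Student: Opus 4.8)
The plan is to derive this as a simple instantiation of axiom \text{MIX}, exactly as anticipated in the remark following Definition~\ref{def.minimallogic}. First I would take the instance of \text{MIX} obtained by setting $\chi:=\bot$, namely $\I\phi\land\I(\I\phi\vee\bot)\to\I^R\phi$. Then I would observe that $(\I\phi\vee\bot)\lra\I\phi$ is a propositional tautology (\text{TAUT}), so by the replacement rule \text{RE-I} we obtain $\I(\I\phi\vee\bot)\lra\I\I\phi$. Combining these two facts by propositional reasoning yields $\I\phi\land\I\I\phi\to\I^R\phi$.

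Concretely, the proof sequence I envisage is: (i) $\I\phi\land\I(\I\phi\vee\bot)\to\I^R\phi$ by \text{MIX} with $\chi=\bot$; (ii) $(\I\phi\vee\bot)\lra\I\phi$ by \text{TAUT}; (iii) $\I(\I\phi\vee\bot)\lra\I\I\phi$ from (ii) by \text{RE-I}; (iv) $\I\phi\land\I\I\phi\to\I^R\phi$ from (i) and (iii) by propositional logic.

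I do not expect any real obstacle here: the only point requiring a moment's care is that the substitution of the provably equivalent formula $\I\phi$ for $\I\phi\vee\bot$ takes place inside the scope of the operator $\I$, which is precisely what the congruence rule \text{RE-I} licenses. Everything else is propositional bookkeeping.
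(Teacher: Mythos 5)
Your proof is correct and is exactly the argument the paper intends: the paper's own (omitted) justification is ``straightforward by letting $\chi$ in axiom $\text{MIX}$ be $\bot$,'' and your steps merely make explicit the routine use of \text{TAUT} and \text{RE-I} needed to replace $\I(\I\phi\vee\bot)$ by $\I\I\phi$.
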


\weg{\begin{proof}
Straightforward by letting $\chi$ in axiom $\text{MIX}$ be $\bot$. 
\end{proof}}

Recall that Fine argues on pages~4032--4033 of~\cite{Fine:2018}, in the setting of the modal system ${\bf S4}$ (which corresponds to $\mathcal{S}4$-frame classes semantically), second-order ignorance implies Rumsfeld ignorance, and the converse also holds (also see (2) and (2$^\ast$) in the Introduction). However, in our framework, only (2) holds (actually, in a context weaker than ${\bf S4}$). We can see the difference between Fine's framework and ours. 

The following result says that in the presence of the axiom $\text{wI-4}$, we can leave out the `(first-order) ignorance' condition in Prop.~\ref{prop.provableink}. That is to say, in the system $\mathbb{IRIK}+\text{wI-4}$, second-order ignorance implies Rumsfeld ignorance. The proof is quite easy.
\begin{proposition}\label{prop.soitori}
$\I\I\phi\to \I^R\phi$ is provable in $\mathbb{IRIK}+\text{wI-4}$, where $\text{wI-4}$ is $\I\I\phi\to\I\phi$.
\end{proposition}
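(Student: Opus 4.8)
The plan is to derive $\I\I\phi\to\I^R\phi$ inside $\mathbb{IRIK}+\text{wI-4}$ by combining the new axiom $\text{wI-4}$ with Proposition~\ref{prop.provableink}. Recall that Proposition~\ref{prop.provableink} already gives us $\vdash\I\phi\land\I\I\phi\to\I^R\phi$ in $\mathbb{IRIK}$, hence a fortiori in $\mathbb{IRIK}+\text{wI-4}$. So the only gap between the hypothesis $\I\I\phi$ and the antecedent of Proposition~\ref{prop.provableink} is the conjunct $\I\phi$; and this is exactly what $\text{wI-4}$, namely $\I\I\phi\to\I\phi$, supplies.

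Concretely, I would lay out the (very short) derivation as follows. First, $\I\I\phi\to\I\phi$ by the axiom $\text{wI-4}$. Second, $\I\I\phi\to\I\phi\land\I\I\phi$ by propositional logic (TAUT) together with the first line. Third, $\I\phi\land\I\I\phi\to\I^R\phi$ by Proposition~\ref{prop.provableink}. Chaining the second and third lines via propositional reasoning yields $\I\I\phi\to\I^R\phi$, as desired.

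There is essentially no obstacle here: the result is an immediate corollary of Proposition~\ref{prop.provableink} once $\text{wI-4}$ is available, and the only "work" is the trivial observation that $\text{wI-4}$ converts $\I\I\phi$ into the conjunction $\I\phi\land\I\I\phi$. If one wished to avoid invoking Proposition~\ref{prop.provableink} as a black box, one could instead unfold its proof: apply axiom $\text{MIX}$ with $\chi:=\bot$ to get $\I\phi\land\I\I\phi\to\I^R\phi$ (using $\text{RE-I}$ to rewrite $\I\phi\lor\bot$ as $\I\phi$), and then prepend $\text{wI-4}$ exactly as above. Either route is routine; I would present the three-line version for brevity.
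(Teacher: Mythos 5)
Your proposal is correct and matches the paper's own (unpublished in the main text, but intended) derivation exactly: cite Proposition~\ref{prop.provableink} for $\I\phi\land\I\I\phi\to\I^R\phi$, use $\text{wI-4}$ to supply the missing conjunct $\I\phi$, and chain by propositional reasoning. Nothing further is needed.
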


\weg{\begin{proof}
We have the following proof sequence in $\mathbb{IRIK}+\text{wI-4}$.
\[
\begin{array}{lll}
    (i) & \I\phi\land\I\I\phi\to\I^R\phi & \text{Prop.~}\ref{prop.provableink}\\
    (ii) & \I\I\phi\to\I\phi & \text{wI-4}\\
    (iii) & \I\I\phi\to\I^R\phi & (i),(ii)\\
\end{array}
\]
\end{proof}}

As Fine shows, in the setting of ${\bf S4}$, Rumsfeld ignorance implies second-order ignorance. However, under our restriction that $R\subseteq R^{\bullet}$, this does not hold.

\begin{proposition}\label{prop.ritosoi}
$\I^R\phi\to\I\I\phi$ is not valid over ${\bf S4}$ proper bi-frames, thus neither is $\I^R\phi\lra \I\I\phi$.
\end{proposition}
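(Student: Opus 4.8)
The plan is to \emph{refute} the implication by exhibiting a single pointed ${\bf S4}$ proper bi-model $(\M,s)$ with $\M,s\vDash\I^Rp$ and $\M,s\nvDash\I\I p$; the final clause about the biconditional is then immediate, since validity of $\I^R\phi\lra\I\I\phi$ would entail validity of $\I^R\phi\to\I\I\phi$. The guiding observation is this: because $R$ is reflexive, $s$ is one of its own $R$-successors, and $\I^Rp$ already forces $\M,s\vDash\I p$; hence $\M,s\nvDash\I\I p$ will hold precisely when \emph{every} $R$-successor of $s$ satisfies $\I p$. On the other hand $\I^Rp$ demands some $R^{\bullet}$-successor $t$ of $s$ with $\M,t\nvDash\I p$, so such a $t$ must lie outside $R(s)$. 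This is exactly the slack provided by allowing $R\subsetneq R^{\bullet}$, and it is what makes the counterexample possible even though it is blocked when $R=R^{\bullet}$.

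Concretely, I would take four worlds $s,a,b,c$, set $V(p)=\{a,c\}$, let $R$ be the reflexive closure of $\{(s,a),(s,b),(a,b),(b,a)\}$, and let $R^{\bullet}=R\cup\{(s,c)\}$. Intuitively $a,b$ form a reflexive two-cluster that each of $s,a,b$ can see via $R$, so $\I p$ is true throughout $\{s,a,b\}$ (each sees the $p$-world $a$ and the non-$p$-world $b$); whereas $c$ is an isolated reflexive point with $p$ true, so $\I p$ fails at $c$; and $c$ is reachable from $s$ only along $R^{\bullet}$.

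The verification then proceeds in four short steps. First, check that $R$ is reflexive and transitive (the only nontrivial compositions are within and through the $a,b$-cluster). Second, check that $R^{\bullet}$ is reflexive and transitive and that $R\subseteq R^{\bullet}$, so that the underlying bi-frame is indeed an ${\bf S4}$ proper bi-frame. Third, compute $\M,s\vDash\I p$ and $\M,c\nvDash\I p$, and conclude $\M,s\vDash\I^Rp$ using the witness $sR^{\bullet}c$. Fourth, note that the $R$-successors of $s$ are exactly $s,a,b$, that each satisfies $\I p$, and hence that $\M,s\nvDash\I\I p$.

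The one point requiring care --- and the only place the argument could go wrong --- is the transitivity of $R^{\bullet}$: we add a single new pair $(s,c)$ and must be sure this does not force any further pairs. It does not, because in this frame the only world reaching $s$ is $s$ itself and $c$ reaches nothing but $c$, so $(s,c)$ composes with existing $R^{\bullet}$-edges only to reproduce $(s,c)$. (If one tried to economize by identifying $c$ with $a$ or $b$ one would be forced to add spurious $R^{\bullet}$-edges out of the cluster; keeping $c$ as a separate sink reachable only from $s$ is what keeps the construction clean.)
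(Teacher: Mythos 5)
Your proposal is correct and takes essentially the same approach as the paper: the paper also refutes the implication with the reflexive closure of its earlier four-point counterexample, which is (up to the irrelevant valuation at $s$) exactly your model --- a reflexive symmetric $R$-cluster $\{a,b\}$ seen from $s$, plus an isolated reflexive $p$-point $c$ reachable from $s$ only via $R^{\bullet}$. Your explicit verification of transitivity of $R^{\bullet}$ and of the failure of $\I\I p$ at $s$ matches what the paper leaves to the reader.
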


\begin{proof}
\weg{Consider the following bi-model $\M=\lr{S,R,R^{\bullet},V}$, where $R\subseteq R^{\bullet}$. For simplicity, we omit the reference to $R^\bullet$, which holds on all arrows.
$$
\xymatrix{&t:p\ar@(ur,ul)_R\\
s:p\ar[ur]\ar[r]_R\ar[dr]_R\ar@(ul,ur)^{R} & u:p\ar[d]^R\ar@(ur,ul)^R\\
&v:\neg p\ar[u]\ar@(dr,dl)^R\\}
$$}
Take the reflexive closure of the bi-model $\M$ in the proof of Prop.~\ref{prop.bimodel-validities}(i), denoted $\M^{\bf T}$.
One may check that  $\M^{\bf T}$ is indeed ${\bf S4}$ (that is, both $R$ and $R^{\bullet}$ are reflexive and transitive) and proper. Moreover, one may verify that $\M^{\bf T},s\vDash\I^Rp$ and $\M^{\bf T},s\nvDash\I\I p$. This shows that $\I^Rp\to \I\I p$ is not valid over ${\bf S4}$ proper bi-frames, as desired.
\end{proof}

\weg{\begin{proof}
We have the following proof sequence in $\mathbb{IRIT}$.
\[
\begin{array}{lll}
    (i) & \I_a\phi\to\I_a\phi & \text{TAUT} \\
    (ii) & \neg\I_a\I_a\phi\land \neg\I_a(\I_a\phi\vee\phi)\to\neg\I_a^R\phi &(i),\text{R-MIX}\\
    (iii) & \I^R_a\phi\to \I_a\I_a\phi\vee\I_a(\I_a\phi\vee\phi) & (ii)\\
    (iv) & \I_a(\I_a\phi\lor \phi)\land \I_a\phi\to\I_a\I_a\phi\vee\I_a(\I_a\phi\to\I_a\phi\vee\phi) &\text{I-T}\\
    (v) & \I_a\phi\to\I_a\phi\vee\phi &\text{TAUT}\\
    (vi) & \neg I_a(\I_a\phi\to\I_a\phi\vee\phi)& (v),\text{R-NI}\\
    (vii) & \I_a(\I_a\phi\lor \phi)\land \I_a\phi\to\I_a\I_a\phi & (iv),(vi)\\
    (viii) & \I_a\phi\to \I_a\I_a\phi\vee\neg\I_a(\I_a\phi\lor \phi)&(vii)\\
    (ix) & \I^R_a\phi\to\I_a\phi & \text{RI-I}\\
    (x) & \I^R_a\phi\to\I_a\I_a\phi & (iii),(viii),(ix)\\
\end{array}
\]
\end{proof}}


Fine~\cite[p.~4033]{Fine:2018} argues that within the context of the system ${\bf S4}$, one does not know that one is Rumsfeld ignorant (see also (3) in the Introduction). This can be symbolized as $\neg\Box\I^R\phi$.\footnote{$\Box$ represents ``knowing that'', which is interpreted by the following: given a pointed bi-model $(\M,s)$, $\M,s\vDash\Box\phi$ iff for all $t$, if $sR^\bullet t$ then $\M,t\vDash\phi$.} This is also the case in our framework. Recall that $R^\bullet$ is the accessibility relation for `knowing that'.
\begin{proposition}
$\neg\Box\I^R\phi$ is valid over ${\bf S4}$ proper bi-frames.
\end{proposition}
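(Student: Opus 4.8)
The plan is to exploit the fact, already flagged in the footnote to item (v) of the introduction, that $\I^R\phi$ is a Fitchean ignorance statement: under the bi-semantics (Definition~\ref{def.semantics-new}), $\M,s\vDash\I^R\phi$ holds exactly when $\M,s\vDash\I\phi$ and $\M,s\nvDash\Box\I\phi$, where $\Box$ is read via $R^{\bullet}$ just as in the statement of the proposition. So $\I^R\phi$ behaves like $\psi\land\neg\Box\psi$ with $\psi:=\I\phi$, and the claim reduces to the familiar observation that no point of a reflexive frame can see only points satisfying a Fitchean conjunction.

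Concretely, I would argue by contradiction. Suppose $\M=\lr{S,R,R^{\bullet},V}$ is an ${\bf S4}$ proper bi-model and $s\in S$ with $\M,s\vDash\Box\I^R\phi$, i.e.\ $\M,t\vDash\I^R\phi$ for every $t$ with $sR^{\bullet}t$. Since $R^{\bullet}$ is reflexive, $sR^{\bullet}s$, hence $\M,s\vDash\I^R\phi$. Unpacking the semantic clause for $\I^R$, there is some $u$ with $sR^{\bullet}u$ and $\M,u\nvDash\I\phi$. But $sR^{\bullet}u$ together with the supposition yields $\M,u\vDash\I^R\phi$, which by axiom RI-I (equivalently, directly from the semantics of $\I^R$) gives $\M,u\vDash\I\phi$ --- a contradiction. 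Hence $\M,s\nvDash\Box\I^R\phi$, and since $\M$ and $s$ were arbitrary, $\neg\Box\I^R\phi$ is valid over ${\bf S4}$ proper bi-frames.

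There is essentially no obstacle here: the only subtlety worth a sentence is that $\I\phi$ is not a syntactic subformula of $\I^R\phi$, so one must pass through the semantic clause for $\I^R$ (rather than any inductive machinery) to extract the required successor $u$. It is also worth remarking that neither transitivity of $R$ or $R^{\bullet}$ nor the properness condition $R\subseteq R^{\bullet}$ is actually used --- reflexivity of $R^{\bullet}$ alone suffices --- so the proposition in fact holds over the larger class of bi-frames whose $R^{\bullet}$ is reflexive, paralleling the way Proposition~\ref{prop.bimodel-validities} isolates the role of each relation.
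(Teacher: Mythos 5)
Your argument is correct and is essentially identical to the paper's own proof: both derive $\M,s\vDash\I^R\phi$ from reflexivity of $R^{\bullet}$, extract an $R^{\bullet}$-successor falsifying $\I\phi$, and contradict $\Box\I^R\phi$ via $\I^R\phi\to\I\phi$. Your closing remark that only reflexivity of $R^{\bullet}$ is needed is also made explicitly in the paper immediately after its proof.
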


\begin{proof}
Suppose not, that is, there exists ${\bf S4}$ proper bi-model $\M=\lr{S,R,R^\bullet,V}$ and $s\in S$ such that $\M,s\vDash \Box\I^R\phi$. Then by reflexivity of $R^{\bullet}$, $\M,s\vDash\I^R\phi$. From this, it following that there exists $t$ such that $sR^\bullet t$ and $\M,t\nvDash\I\phi$. However, since $s\vDash\Box\I^R\phi$ and $sR^\bullet t$, we have $t\vDash\I^R\phi$, which implies $t\vDash\I\phi$: a contradiction.
\end{proof}

Note that in the above proof, we only use the reflexivity of $R^\bullet$, so this actually shows that $\neg\Box\I^R\phi$ is valid over reflexive proper bi-frames. However, we cannot show it in a syntactic way; that is to say, we cannot show that $\neg\Box\I^R\phi$ is provable in the system $\mathbb{IRIT}$, since the operator $\Box$ is undefinable in $\IRI$ over the class of ${\bf S4}$ proper bi-frames.\footnote{For instance, take two ${\bf S4}$ proper bi-models $\M=\lr{S,R,R^\bullet, V}$ and $\M'=\lr{S',R',R^{\bullet'}, V'}$, where $S=\{s,t\}$, $R(s)=\{s\}$, $R^\bullet(s)=\{s,t\}$, $R(t)=R^\bullet(t)=\{t\}$, $V(p)=\{s,t\}$, whereas $S'=\{s',t'\}$, $R'(s')=\{s'\}$, $R^{\bullet'}(s')=\{s',t'\}$, $R'(t')=R^{\bullet'}(t')=\{t'\}$,  $V(p)=\{s'\}$. One may show that $(\M,s)$ and $(\M',s')$ cannot be distinguished by $\IRI$-formulas, but can be distinguished by $\Box p$.}

\weg{\begin{proof}
Recall that in our framework, the accessibility relation for $\Box$ is $R^\bullet$. Consider the following bi-model $\M=\lr{S,R,R^{\bullet},V}$, where $R\subseteq R^{\bullet}$. For simplicity, we omit the reference to $R^\bullet$, which holds on all arrows.
$$
\xymatrix{&t:p\ar@(ur,ul)_R&\\
v: p\ar@(dr,dl)^R &s:p\ar[u]^R\ar[r]_R\ar[l]\ar@(dl,dr)_{R} & u:\neg p\ar@(dr,dl)^R\\
}
$$
One may check that $\M$ is indeed ${\bf S4}$ and proper. Moreover, $\M,s\vDash\Box\I^Rp$: . 

This shows that $\neg \Box\I^R\phi$ is not valid over ${\bf S4}$ proper bi-frames, as desired.
\end{proof}

\begin{proposition}\label{prop.iri}
$\I^R\phi\to\I\I^R\phi$ is not valid over ${\bf S4}$ proper bi-frames, neither is $\neg\Box\I^R\phi$.
\end{proposition}

\begin{proof}
Consider the following bi-models $\M=\lr{S,R,R^{\bullet},V}$, where $R\subseteq R^{\bullet}$. For simplicity, we omit the reference to $R^\bullet$, which holds on all arrows.
$$
\xymatrix{&t:p\ar@(ur,ul)_R\\
s:p\ar[ur]\ar[r]_R\ar[dr]_R\ar@(ul,ur)^{R} & u:p\ar[d]^R\ar@(ur,ul)^R\ar@/_12pt/[u]\\
&v:\neg p\ar[u]\ar@(dr,dl)^R\ar@/_26pt/[uu]\\}
$$

One may check that $\M$ is indeed ${\bf S4}$ and proper. Moreover, one may verify that $\M,s\vDash\I^Rp$ and $\M,s\nvDash\I\I^R p$. This shows that $\I^Rp\to \I\I^R p$ is not valid over ${\bf S4}$ proper bi-frames, as desired.
\end{proof}}

\weg{\begin{proposition}\label{prop.iri}
$\I^R_a\phi\to\I_a\I^R_a\phi$ is provable in $\mathbb{IRIT}$.
\end{proposition}

\begin{proof}
We have the following proof sequence in $\mathbb{IRIT}$.
\[
\begin{array}{lll}
    (i) & \I^R\phi\to\I\phi &\text{RI-I} \\
    (ii) &
    \I
    \neg\I\I^R\phi\land\neg\I_a(\I^R_a\phi\vee\phi)\to\neg\I^R_a\phi & (i),\text{MIX}\\
    (iii) & \I^R_a\phi\to\I_a\I_a^R\phi\vee\I_a(\I^R_a\phi\vee\phi) & (ii)\\
    (iv) & \I_a(\I_a^R\phi\vee\phi)\land\I^R_a\phi\to\I_a\I^R_a\phi\vee\I_a(\I^R_a\phi\to\I^R_a\phi\vee\phi) & \text{I-T}\\
    (v) & \I^R_a\phi\to\I^R_a\phi\vee\phi & \text{TAUT}\\
    (vi) & \neg\I_a(\I^R_a\phi\to\I^R_a\phi\vee\phi) & (v),\text{R-NI}\\
    (vii) & \I_a(\I_a^R\phi\vee\phi)\land\I^R_a\phi\to\I_a\I^R_a\phi & (iv),(vi)\\
    (viii) & \I^R_a\phi\to \I_a\I^R_a\phi\vee\neg\I_a(\I^R_a\phi\vee\phi) & (vii)\\
    (ix) & \I^R_a\phi\to\I_a\I^R_a\phi & (iii),(viii)\\
\end{array}
\]
\end{proof}}


Also, Fine~\cite[Thm.~4]{Fine:2018} proves that within the context of ${\bf S4}$, one does not know one is second-order ignorant. This may be symbolized as $\neg\Box\I\I\phi$. Again, this is also the case in our framework.
\begin{proposition}
$\neg\Box\I\I\phi$ is valid over ${\bf S4}$ proper bi-frames.
\end{proposition}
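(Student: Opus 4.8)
The plan is a short semantic argument by contradiction, in the same spirit as (but simpler than) Fine's reasoning for the corresponding ${\bf S4}$ fact. Suppose $\M=\lr{S,R,R^\bullet,V}$ is an ${\bf S4}$ proper bi-model and $\M,s\vDash\Box\I\I\phi$ for some $s\in S$. Since $R^\bullet$ is reflexive we have $sR^\bullet s$, so already $\M,s\vDash\I\I\phi$. By the truth clause for $\I$ applied to $\I\I\phi$, this yields in particular a ``negative witness'': some $u$ with $sRu$ and $\M,u\nvDash\I\phi$.

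The next step is to transport the box from $s$ to $u$. Because $\M$ is proper, $R\subseteq R^\bullet$, hence $sR^\bullet u$; from $\M,s\vDash\Box\I\I\phi$ we therefore get $\M,u\vDash\I\I\phi$. In particular $u$ has at least one $R$-successor, and there is a ``positive witness'' $t$ with $uRt$ and $\M,t\vDash\I\phi$. Now I would exploit $\M,u\nvDash\I\phi$ together with the fact that $u$ \emph{does} have $R$-successors: unpacking $\neg\I\phi$ at $u$ shows that all $R$-successors of $u$ must agree on $\phi$, i.e.\ \emph{either} every $R$-successor of $u$ satisfies $\phi$, \emph{or} every $R$-successor of $u$ satisfies $\neg\phi$. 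Since $uRt$, the world $t$ is one of these successors; and by transitivity of $R$ (from $uRt$ and $tRa$ we get $uRa$), every $R$-successor of $t$ is again an $R$-successor of $u$, hence satisfies the same literal. In the first case no $R$-successor of $t$ falsifies $\phi$, and in the second case no $R$-successor of $t$ satisfies $\phi$; either way one of the two existential conjuncts in the clause for $\I\phi$ fails at $t$, so $\M,t\nvDash\I\phi$, contradicting $\M,t\vDash\I\phi$. This contradiction establishes that $\neg\Box\I\I\phi$ is valid over ${\bf S4}$ proper bi-frames.

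I do not expect a real obstacle here; the only points requiring care are bookkeeping ones. First, one must be sure that $u$ genuinely has $R$-successors before reading ``$\neg\I\phi$ at $u$'' as ``the $R$-successors of $u$ all agree on $\phi$'' — this is guaranteed precisely because $\M,u\vDash\I\I\phi$ already produces the witness $t$ with $uRt$. Second, the inclusion $R\subseteq R^\bullet$ must be invoked at exactly the right place, namely to push $\Box\I\I\phi$ forward along the $R$-step $sRu$. It is worth noting that the argument uses only reflexivity of $R^\bullet$, the inclusion $R\subseteq R^\bullet$, and transitivity of $R$; neither transitivity of $R^\bullet$ nor reflexivity of $R$ is needed, so the statement in fact holds over a broader class, though I would present it as stated. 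The proof also parallels closely the earlier proof that $\neg\Box\I^R\phi$ is valid over reflexive proper bi-frames, and I would remark on that kinship.
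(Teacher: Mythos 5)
Your proof is correct and takes essentially the same route as the paper's: contradiction, reflexivity of $R^\bullet$ to obtain $\I\I\phi$ at $s$, the inclusion $R\subseteq R^\bullet$ to push $\Box\I\I\phi$ to the negative witness $u$, and transitivity of $R$ to derive the contradiction (the paper lands the contradiction at $u\vDash\I\phi$ rather than at $t\nvDash\I\phi$, but this is the same fact read from the other end). Your observation about which frame conditions are actually used also matches what the paper's argument implicitly relies on.
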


\begin{proof}
Suppose not, then there exists ${\bf S4}$ proper bi-model $\M=\lr{S,R,R^\bullet,V}$ and $s\in S$ such that $\M,s\vDash \Box\I\I\phi$. Since $sR^\bullet s$, $\M,s\vDash \I\I\phi$. It follows that for some $t,u$ such that $sRt$ and $sRu$, we have $\M,t\vDash\I\phi$ and $\M,u\nvDash\I\phi$. Since $R\subseteq R^\bullet$, $sR^\bullet u$, and then $\M,u\vDash\I\I\phi$. So there is a $v$ such that $uRv$ and $\M,v\vDash \I\phi$, and thus $vRv_1$ and $vRv_2$ and $\M,v_1\vDash\phi$ and $\M,v_2\nvDash\phi$ for some $v_1$ and $v_2$. By transitivity of $R$, we obtain $uRv_1$ and $uRv_2$, and thus $\M,u\vDash\I\phi$: a contradiction.
\end{proof}

\weg{\begin{fact}\label{prop.iitoiii}
$\I\I\phi\to\I\I\I\phi$ is valid over ${\bf S4}$ proper bi-frames.
\end{fact}}

Moreover, Fine~\cite[Coro.~5]{Fine:2018} shows that all $n$-th (where $n\geq 3$ and $n\in \mathbb{N}$) order ignorance are reduced to second-order ignorance. This is also the case in our framework.

\begin{proposition}\label{coro.reducedtosecond}
For natural numbers $m,n>1$, $\I^m\phi\lra \I^n\phi$ is valid over ${\bf S4}$ proper bi-frames.
\end{proposition}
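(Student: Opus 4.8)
The plan is to reduce the biconditional to two one-step semantic facts and then iterate. Observe first that $\I$ is interpreted purely in terms of the relation $R$, and that in an ${\bf S4}$ proper bi-frame $R$ is reflexive and transitive; since the formula $\I^m\phi\lra\I^n\phi$ contains no occurrence of $\I^R$, the relation $R^\bullet$ and the properness condition $R\subseteq R^\bullet$ play no role, so it suffices to argue that $\I^m\phi\lra\I^n\phi$ holds at every point of every model whose underlying relation $R$ is reflexive and transitive. I would isolate two claims: (i) $\I\I\psi\to\I\psi$ is valid on transitive frames (the semantic content of the axiom $\text{wI-4}$), and (ii) $\I\I\psi\to\I\I\I\psi$ is valid on reflexive transitive frames. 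Granting both: by (i), $\I^{k+2}\phi\to\I^{k+1}\phi$ for every $k\ge 0$, so iterating downwards gives $\I^m\phi\to\I^2\phi$ for all $m>1$; by (ii), $\I^{k+2}\phi\to\I^{k+3}\phi$ for every $k\ge 0$, so iterating upwards gives $\I^2\phi\to\I^n\phi$ for all $n>1$. Composing these yields $\I^m\phi\lra\I^2\phi\lra\I^n\phi$ for all $m,n>1$, which is the desired validity over ${\bf S4}$ proper bi-frames.

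Claim (i) is immediate: suppose $\M,s\vDash\I\I\psi$. By the semantics of the outer $\I$, there is $u$ with $sRu$ and $\M,u\nvDash\I\psi$, and there is $t$ with $sRt$ and $\M,t\vDash\I\psi$; unpacking the latter, there are $v,w$ with $tRv$, $tRw$, $\M,v\vDash\psi$ and $\M,w\nvDash\psi$. Transitivity of $R$ gives $sRv$ and $sRw$, hence $\M,s\vDash\I\psi$. (Reflexivity is not needed here.)

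For claim (ii), suppose $\M,s\vDash\I\I\psi$, so there are $t,u$ with $sRt$, $sRu$, $\M,t\vDash\I\psi$ and $\M,u\nvDash\I\psi$. To obtain $\M,s\vDash\I\I\I\psi$ I must produce an $R$-successor of $s$ satisfying $\I\I\psi$ and one falsifying it. For the former, reflexivity gives $sRs$, and $\M,s\vDash\I\I\psi$ by assumption, so $s$ itself serves as the witness. For the latter, take $u$: by claim (i) the formula $\I\I\psi\to\I\psi$ holds at every world of $\M$ (since $R$ is transitive), so from $\M,u\nvDash\I\psi$ we get $\M,u\nvDash\I\I\psi$ by contraposition. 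Hence $\M,s\vDash\I\I\I\psi$.

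The argument is largely routine; the one spot calling for a little care is the choice of witnesses in claim (ii) --- recognising that reflexivity lets $s$ act as its own $\I\I\psi$-witness, and that claim (i) is exactly what upgrades the failure of $\I\psi$ at $u$ to a failure of $\I\I\psi$ at $u$. The remaining bookkeeping --- keeping every exponent at least $2$ so that the two one-step facts apply at each stage of the iteration --- is immediate. If a syntactic proof is preferred, (i) and (ii) can instead be derived in $\mathbb{IRIT}+\text{wI-4}$ using $\text{R-NI}$, $\text{R-MIX}$ and $\text{I-T}$, but the semantic route above is shorter.
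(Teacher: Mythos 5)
Your proposal is correct and follows essentially the same route as the paper: the paper likewise reduces the claim to the two one-step validities $\I\I\psi\to\I\psi$ (needing only transitivity of $R$) and $\I\I\psi\to\I\I\I\psi$ (needing reflexivity and transitivity of $R$) and then iterates, leaving both as ``not hard to show.'' Your write-up simply supplies the witness-chasing details for those two facts (including the correct observation that $R^\bullet$ and properness are irrelevant since the formula contains no $\I^R$), so nothing further is needed.
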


\begin{proof}
Let $m,n>1$ be natural numbers.
It is not hard to show that $\I\I\phi\to\I\I\I\phi$ is valid over ${\bf S4}$ proper bi-frames (in this case, we only need the ${\bf S4}$ property of $R$). In consequence, $\I^m\phi\to\I^n\phi$ is provable in $\mathbb{IRIS}4$ for $m\leq n$. Furthermore, we can show that $\I\I\phi\to\I\phi$ is valid over ${\bf S4}$ proper bi-frames (in this case, we only need the property of transitivity of $R$), so is
$\I^m\phi\to\I^n\phi$ for $m>n$, as desired.
\end{proof}

We conclude this section with another application of our framework. Fine~\cite[Thm.~7]{Fine:2018} demonstrates that within the context of ${\bf S4}$, one does not know that any of the following formulas hold: $\I(\I\phi_1\land\cdots\land\I\phi_n)$, $\I(\I\phi_1\lor\cdots\lor\I\phi_n)$, $\I\I\phi_1\land\cdots\land\I\I\phi_n$, and $\I\I\phi_1\lor\cdots\lor\I\I\phi_n$. Again, this is also the case in our framework.
\begin{proposition}
Let $n>0$ be a natural number. Then the following formulas are valid over ${\bf S4}$ proper bi-frames.
\begin{itemize}
    \item[(1)] $\neg\Box\I(\I\phi_1\land\cdots\land\I\phi_n)$
    \item[(2)] $\neg\Box\I(\I\phi_1\lor\cdots\lor\I\phi_n)$
    \item[(3)] $\neg\Box(\I\I\phi_1\land\cdots\land\I\I\phi_n)$
   \item[(4)] $\neg\Box(\I\I\phi_1\lor\cdots\lor\I\I\phi_n)$
\end{itemize}
\end{proposition}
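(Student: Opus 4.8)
The plan is to reduce all four items to one elementary observation about transitive frames and then treat the cases separately, with item~(4) being the one that needs a genuinely new (pigeonhole-type) argument. The observation I would record first is a \emph{key lemma}: on any transitive bi-model, if $sRt$ and $\M,t\vDash\I\psi$, then $\M,s\vDash\I\psi$; equivalently, $\Kw\psi$ (that is, $\neg\I\psi$) propagates forward along $R$. The proof is immediate — if $\M,t\vDash\I\psi$ then $t$ has $R$-successors $t_1\vDash\psi$ and $t_2\nvDash\psi$, and by transitivity these are $R$-successors of $s$ — and this is exactly the fact already used in the proof of $\neg\Box\I\I\phi$ above, as well as the content of ``$\I\I\phi\to\I\phi$ is valid over transitive frames'' invoked in Prop.~\ref{coro.reducedtosecond}.

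Throughout, suppose towards a contradiction that $\M=\lr{S,R,R^\bullet,V}$ is an ${\bf S4}$ proper bi-model and $\M,s\vDash\Box\theta$, where $\theta$ is the inner formula in question. Since $R^\bullet$ is reflexive, $\M,s\vDash\theta$; since $R\subseteq R^\bullet$, $\theta$ holds at every $R$-successor of $s$ (indeed at every $R^\bullet$-successor); and since $R$ is transitive the set of states $R$-reachable from $s$ is closed under $R$. Item~(3) is then trivial, because $\Box(\I\I\phi_1\land\cdots\land\I\I\phi_n)$ entails $\Box\I\I\phi_1$, contradicting the already-established validity of $\neg\Box\I\I\phi_1$. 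For items~(1) and~(2) I would argue in one step: from $\M,s\vDash\theta$ pick an $R$-successor $u$ of $s$ at which the inner conjunction/disjunction of the $\I\phi_i$ fails, so that $\M,u\nvDash\I\phi_i$ for \emph{some} $i$ (case~(1)) or for \emph{all} $i$ (case~(2)); as $\theta$ also holds at $u$, there is an $R$-successor $v$ of $u$ with $\M,v\vDash\I\phi_j$ for the relevant $j$ (namely $j=i$ in case~(1), some $j$ in case~(2)); the key lemma pushes $\I\phi_j$ back down to $u$, contradicting $\M,u\nvDash\I\phi_j$.

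Item~(4), $\neg\Box(\I\I\phi_1\lor\cdots\lor\I\I\phi_n)$, is the main obstacle, since it cannot be reduced to a single disjunct. Here I would attach to each state $w$ with $sRw$ the index set $J(w)=\{\,j\le n : \M,w\nvDash\I\phi_j\,\}$; by (the contrapositive of) the key lemma, $J$ is monotone along $R$, i.e.\ $wRw'$ implies $J(w)\subseteq J(w')$. Because each $J(w)\subseteq\{1,\dots,n\}$, the integer $|J(w)|$ takes finitely many values, so I may pick $w^\ast$ with $sRw^\ast$ maximizing $|J(w^\ast)|$. Now $\M,w^\ast\vDash\I\I\phi_k$ for some $k$ (this uses that $\theta$ holds at $w^\ast$). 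The ``positive'' witness of $\I\I\phi_k$ gives an $R$-successor of $w^\ast$ satisfying $\I\phi_k$, whence, by monotonicity of $J$, $k\notin J(w^\ast)$; the ``negative'' witness gives an $R$-successor $b$ of $w^\ast$ with $\M,b\nvDash\I\phi_k$, i.e.\ $k\in J(b)$. Then $J(w^\ast)\subseteq J(b)$ and $k\in J(b)\setminus J(w^\ast)$, so $|J(b)|>|J(w^\ast)|$ with $sRb$, contradicting maximality. I expect the only delicate points to be verifying that $w^\ast$ exists and that the two witnesses of $\I\I\phi_k$ are used in the right direction; everything else is bookkeeping. It is worth noting that these arguments use only transitivity of $R$ and reflexivity of $R^\bullet$, so the four validities in fact already hold over the weaker class of proper bi-frames where only $R$ is required transitive and only $R^\bullet$ reflexive.
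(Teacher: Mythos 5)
Your proposal is correct. For items (1)--(3) it is essentially the paper's own argument: reflexivity of $R^\bullet$ puts the inner formula at $s$, properness ($R\subseteq R^{\bullet}$) pushes it to the negative witness $u$ of the outer $\I$, and transitivity of $R$ supplies exactly your key lemma (that $\I\psi$ propagates backwards along $R$), yielding the contradiction; the paper proves (3) by rerunning this inline rather than citing $\neg\Box\I\I\phi_1$, but that is cosmetic. Item (4) is where you genuinely diverge. The paper constructs a sequence $s R s_1 R s_2 R \cdots R s_n$, repeatedly relabelling "w.l.o.g." which disjunct holds, showing each $s_{i}$ kills one more $\I\I\phi_j$ than its predecessor, and closing with "continuing with this process"; along the way it invokes reflexivity of $R$ and transitivity of $R^\bullet$ in addition to the hypotheses you use. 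Your extremal argument --- maximize $|J(w)|$ over $R$-successors of $s$, then use the two witnesses of $\I\I\phi_k$ at $w^\ast$ to produce $b$ with $sRb$ and $|J(b)|>|J(w^\ast)|$ --- packages the same monotonicity of $J$ into a single pigeonhole step, eliminates the induction/relabelling bookkeeping, and, as you note, establishes all four validities under the weaker hypotheses that $R$ is transitive, $R^\bullet$ is reflexive and $R\subseteq R^\bullet$. The one point worth making explicit is that the maximum exists because $\{|J(w)| : sRw\}$ is a nonempty (since $s\vDash\I\I\phi_k$ forces $R$-successors of $s$ to exist) set of integers bounded by $n$; with that said, the argument is complete.
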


\begin{proof}
Let $\M=\lr{S,R,R^\bullet,V}$ be an ${\bf S4}$ proper bi-model and $s\in S$ be arbitrary.

For (1), suppose, for a contradiction, that $\M,s\vDash\Box\I(\I\phi_1\land\cdots\land\I\phi_n)$. By reflexivity of $R^\bullet$, $s\vDash\I(\I\phi_1\land\cdots\land\I\phi_n)$. Then there exists $t$ such that $sRt$ and $t\nvDash \I\phi_1\land\cdots\land\I\phi_n$, and thus $t\nvDash\I\phi_m$ for some $m=1,\dots,n$. Since $R\subseteq R^\bullet$, we infer that $sR^\bullet t$, and then $t\vDash\I(\I\phi_1\land\cdots\land\I\phi_n)$. Then for some $u$ such that $tRu$, we have $u\vDash\I\phi_1\land\cdots\land\I\phi_n$, thus $u\vDash\I\phi_m$. This follows that $uRu_1$ and $uRu_2$ and $u_1\vDash\phi_m$ and $u_2\nvDash\phi_m$ for some $u_1$ and $u_2$. By transitivity of $R$, $tRu_1$ and $tRu_2$, and hence $t\vDash\I\phi_m$: a contradiction. Therefore, the formula (1) is valid over ${\bf S4}$ proper bi-frames.
Similarly, we can show the validity of (2) over the frames in question.

For (3), assume towards contradiction that $\M,s\vDash\Box(\I\I\phi_1\land\cdots\land\I\I\phi_n)$. By reflexivity of $R^\bullet$, $s\vDash\I\I\phi_1\land\cdots\land\I\I\phi_n$, thus $s\vDash\I\I\phi_1$. Then there are $t$ such that $sRt$ and $t\nvDash\I\phi_1$. Since $R\subseteq R^\bullet$, we have $sR^\bullet t$, thus $t\vDash\I\I\phi_1$. But in the proof of Prop.~\ref{coro.reducedtosecond} we have seen that $\I\I\phi_1\to \I\phi_1$ is valid over $\M$: a contradiction.

The proof for (4) is more complicated. Suppose, for a contradiction, that $\M,s\vDash\Box(\I\I\phi_1\vee\cdots\vee\I\I\phi_n)$. By reflexivity of $R^\bullet$, $s\vDash\I\I\phi_1\vee\cdots\vee\I\I\phi_n$. Then $\M,s\vDash\I\I\phi_i$ for some $i=1,\dots,n$. W.l.o.g. we may assume that $i=1$, that is, $\M,s\vDash\I\I\phi_1$. Then there exists $s_1$ such that $sRs_1$ and $\M,s_1\nvDash\I\phi_1$, and thus $s_1\nvDash\I\I\phi_1$. Since $R\subseteq R^\bullet$, $sR^\bullet s_1$, thus $s_1\vDash\I\I\phi_1\vee\cdots\vee\I\I\phi_n$, and hence $s_1\vDash\I\I\phi_2\vee\cdots\vee\I\I\phi_n$. Again, w.l.o.g. we may assume that $s_1\vDash\I\I\phi_2$. Then there exists $s_2$ such that $s_1Rs_2$ and $\M,s_2\nvDash\I\phi_2$, and thus $s_2\nvDash\I\I\phi_2$. Again, since $R\subseteq R^\bullet$, $s_1R^\bullet s_2$. By transitivity of $R^\bullet$, $sR^\bullet s_2$, thus $s_2\vDash\I\I\phi_1\vee\cdots\vee\I\I\phi_n$. Moreover, by reflexivity of $R$, $s_1Rs_1$, from which we can obtain that $s_2\nvDash\I\phi_1$, and then $s_2\nvDash\I\I\phi_1$. This follows that $s_2\vDash\I\I\phi_3\vee\cdots\vee\I\I\phi_n$. Continuing with this process, we can obtain a sequence $s_2Rs_3R\cdots Rs_{n}$ in $\M$ and $s_{n}\nvDash\I\I\phi_1\vee\cdots\vee\I\I\phi_{n}$. However, by $R\subseteq R^\bullet$ and transitivity of $R^\bullet$, we infer that $sR^\bullet s_n$, and thus $s_n\vDash\I\I\phi_1\vee\cdots\vee\I\I\phi_n$: a contradiction. 
\end{proof}

\weg{\begin{theorem}\label{prop.iitoiii}
$\I\I\phi\to\I\I\I\phi$ is valid over ${\bf S4}$-frames, so is $\neg \Box\I\I\phi$. Consequently, $\I^n\phi\to\I^{n+1}\phi$ is valid over ${\bf S4}$-frames for all natural numbers $n\geq 2$.
\end{theorem}

\begin{proof}
Straightforward.
\end{proof}}
\weg{\begin{proof}
We have the following proof sequence in $\mathbb{IRIS}4$.
\[
\begin{array}{lll}
    (i) & \I_a\I_a\phi\lra \I^R_a\phi & \text{Coro.~}\ref{coro.riiffsoi} \\
    (ii) & \I^R_a\phi\to\I_a\I_a^R\phi & \text{Prop.~}\ref{prop.iri}\\
    (iii) & \I_a\I_a\phi\to\I_a\I_a\I_a\phi & (i),(ii),\text{RE-I}
\end{array}
\]
\end{proof}
}

\weg{Fine seems to {\em not} notice the equivalence between the statement that one does not know one is second-order ignorant and the statement that second-order ignorance implies third-order ignorance (within the context of S4).
Recall that
in~\cite{Fine:2018}, it is argued without proofs that second-order ignorance implies
third-order ignorance and in general, $n$th-order ignorance implies $(n+1)$th-order ignorance for all natural numbers $n\geq 2$, as follows.
\begin{quote}
    If one suffers from second-order ignorance then one does not know that one suffers
from second-order ignorance and, of course, if one suffers from second-order
ignorance then one does not know that one does not suffer from second-order ignorance,
since it is impossible to know what is false. So {\em second-order ignorance implies
third-order ignorance (ignorance whether one is second-order ignorant)}. Likewise
third-order ignorance implies fourth order ignorance and so on, indefinitely, through
all the orders of ignorance. (p.~4034, emphasis mine)
\end{quote}

Note that to argue that ``if one suffers from second-order ignorance then one does not know that one suffers
from second-order ignorance'', which is a necessary condition of the statement that {\em second-order ignorance implies third-order ignorance}, Fine uses a complex argument which we may call `list argument' (see ~\cite[p.~4033]{Fine:2018} for details). Also, he uses the verb `know' in the argument. In comparison, our proof for the statement is purely syntactic, and we do not use of the verb `know' or the knowledge operator $\Box$. Also, note that the use of the word `Likewise' in the above quotation, which means that one needs a similar way to the argument of the statement that second-order ignorance implies third-order ignorance, to argue that third-order ignorance implies fourth order ignorance (and so on). However, in our framework, $\I^3_a\phi\to \I^4_a\phi$ and in general $\I_a^n\phi\to\I_a^{n+1}\phi$ (for all natural numbers $n\geq 2$) are just instances of $\I_a\I_a\phi\to\I_a\I_a\I_a\phi$; we do not even bother to use a similar proof.

The following result states that in $\mathbb{IRIS}4$ (that is, within the context of S4), all $n$th-order ignorance for $n\geq 3$ are reduced to second-order ignorance. Note that in the proof, we do not use any $\Box$ or $\Diamond$ operators, in contrast to that in~\cite[Coro.~5]{Fine:2018}.
\begin{corollary}\label{coro.reducedtosecond}
For natural numbers $m,n>1$, $\I_a^m\phi\lra \I_a^n\phi$ is provable in $\mathbb{IRIS}4$.
\end{corollary}

\begin{proof}
Let $m,n>1$ be natural numbers.
By Prop.~\ref{prop.iitoiii}, we obtain that $\I^m_a\phi\to\I^n_a\phi$ is provable in $\mathbb{IRIS}4$ for $m\leq n$. By axiom $\text{wI-4}$, we can show that $\I_a^m\phi\to\I_a^n\phi$ is provable in $\mathbb{IRIS}4$ for $m>n$, as desired.
\end{proof}

Therefore, in $\mathbb{IRIS}4$ (that is, within the context of S4), we have only two kinds of ignorance in terms of orders: first-order ignorance and second-order ignorance, namely $\I_a\phi$ and $\I_a\I_a\phi$. By axiom $\text{wI-4}$, $\I_a\I_a\phi\to\I_a\phi$ is provable in $\mathbb{IRIS}4$. By soundness and the counter-model construction, we can see that there are and only are three truth-value distributions (in Fine's terms~\cite[p.~4039]{Fine:2018}, {\em contingency profile}): $FF$, $TF$, $TT$. This is essentially what Fine shows in~\cite[Thm.~6]{Fine:2018}.

One may generalize Thm.~\ref{prop.iitoiii} to the following result, of which the long proofs are deferred to the Appendix.

\begin{restatable}{theorem}{Firstthm}\cite[Thm.~7]{Fine:2018}\label{thm.provable}
Let $n>0$ be a natural number. Then the following formulas are provable in $\mathbb{IRIS}4$.\footnote{Here again, we use the definition of $\Box_a$ in terms of $\I_a$ over the class of reflexive frames, defined as above. The same remark goes to Thm.~\ref{thm.modalized} below.}
\begin{itemize}
    \item[(1)] $\I_a(\I_a\phi_1\land\cdots\land\I_a\phi_n)\to \I_a\I_a(\I_a\phi_1\land\cdots\land\I_a\phi_n)$
    \item[(2)] $\I_a(\I_a\phi_1\lor\cdots\lor\I_a\phi_n)\to \I_a\I_a(\I_a\phi_1\lor\cdots\lor\I_a\phi_n)$
    \item[(3)] $\I_a\I_a\phi_1\land\cdots\land\I_a\I_a\phi_n\to\I_a(\I_a\I_a\phi_1\land\cdots\land\I_a\I_a\phi_n)$
   \item[(4)] $\I_a\I_a\phi_1\lor\cdots\lor\I_a\I_a\phi_n\to\I_a(\I_a\I_a\phi_1\lor\cdots\lor\I_a\I_a\phi_n)$
\end{itemize}
\end{restatable}

\weg{One may ask if the disjunction version of (3) in the above proposition also holds; that is, $\I_a\I_a\phi_1\lor\cdots\lor\I_a\I_a\phi_n\to\I_a(\I_a\I_a\phi_1\lor\cdots\lor\I_a\I_a\phi_n)$ is provable in $\mathbb{IRIS}4$ for every natural number $n>0$. Unfortunately, the answer is negative.
\begin{proposition}
$\I_a\I_ap\lor\I_a\I_aq\to\I_a(\I_a\I_ap\lor\I_a\I_aq)$ is not provable in $\mathbb{IRIS}4$.
\end{proposition}

\begin{proof}
By soundness of $\mathbb{IRIS}4$ (Thm.~\ref{thm.comp-extensions}), it suffices to show that the formula is not valid over the class of $\mathcal{S}4$-frames. Consider the following $\mathcal{S}4$-model, where we omit the reflexive and transitive arrows and arrow labels $a$ for the sake of simplicity:
$$\xymatrix{v_1:p,q&v_2:\neg p,q&&v_3:p,q&v_4:p,\neg q\\
u_1:p,q\ar[u]\ar[ur]&u_2:\neg p,q&&u_3:p,q\ar[u]\ar[ur]&u_4:p,q\\
&t_1:p,q\ar[ul]\ar[u]&&t_2:p,q\ar[u]\ar[ur]&\\
&&s:p,q\ar[ur]\ar[ul]&&}$$

Since $v_1\vDash p$ and $v_2\nvDash p$, $u_1\vDash\I_ap$. Note that $u_2\nvDash\I_ap$. Then $t_1\vDash\I_a\I_ap$, and thus $t_1\vDash\I_a\I_ap\vee\I_a\I_aq$.

Because $v_3\vDash q$ and $v_4\nvDash q$, $u_3\vDash\I_aq$. Also note that $u_4\nvDash \I_aq$. Then $t_2\vDash\I_a\I_aq$, and thus $t_2\vDash \I_a\I_ap\vee\I_a\I_aq$.

Since $u_1\vDash p$ and $u_2\nvDash p$, $t_1\vDash\I_ap$. Since $p$ is true at all $R_a$-successors of $t_2$, $t_2\nvDash\I_ap$. Then $s\vDash\I_a\I_ap$, and thus $s\vDash\I_a\I_ap\vee\I_a\I_aq$.
\end{proof}}

One may easily see that $\neg\I_a\I_a\phi$ is provable in any system including $\text{wI-4}$ and $\text{wI-5}$ as axioms. This means that there are no second-order ignorance nor higher-order ignorance in such systems, in contrast to the fact that there are second-order ignorance in $\mathbb{IRIS}4$ (see Coro.~\ref{coro.reducedtosecond} and the passage following it).

Now we provide a simpler proof for~\cite[Thm.~10]{Fine:2018}. For this, we need to show the following result, whose proof is postponed to the appendix.
\begin{restatable}{proposition}{Firstprop}\label{prop.useful}
The following formulas are provable in $\mathbb{IRIS}4$:
\begin{enumerate}
    \item[(1)] $\I_a(\neg\I_a\phi\land \phi)\to\I_a\I_a(\neg\I_a\phi\land \phi)$
    \item[(2)] $\I_a(\neg\I_a\phi\land\neg \phi)\to\I_a\I_a(\neg\I_a\phi\land\neg \phi)$
    \item[(3)] $\I_a(\I_a\phi\lor\neg\phi)\to\I_a\I_a(\I_a\phi\lor\neg\phi)$
    \item[(4)] $\I_a(\I_a\phi\lor\phi)\to\I_a\I_a(\I_a\phi\lor \phi)$
\end{enumerate}
\end{restatable}

 Given a formula $\phi$, it is said to be a {\em $p$-formula}, if $p$ is its sole propositional variable; it is said to be {\em modalized}, if each of its propositional variable occurs within the scope of a modal operator (that is, $\K_a$, $\hK_a$, $\I_a$). Again, the proof of the following result is postponed to the appendix.

\begin{restatable}{theorem}{Secondthm}~\cite[Thm.~10]{Fine:2018}\label{thm.modalized}
Let $\phi$ be a modalized $p$-formula. Then in $\mathbb{IRIS}4$, $\phi\to\I_a\phi$ is provable iff $\phi\to\I_a\I_a\phi$ is provable.
\end{restatable}

We conclude this section with an application to another result in~\cite{Fine:2018}. Again, we delay the proofs to the appendix.
\begin{restatable}{theorem}{Thirdthm}\cite[Thm.~11]{Fine:2018}
In $\mathbb{IRIS}4$, there is no weakest unboxable $p$-formula, that is, no unboxable $p$-formula is implied by all unboxable formulas.
\end{restatable}


Recall in~\cite[p.~4036]{Fine:2018}, it says
\begin{quote}
    Within the system S5 (whose additional axiom is $\phi\to\Box\Diamond\phi$), all ignorance will be known, thereby excluding the possibility of thorough-going ignorance.
\end{quote}

Again, this can be formalized as $\I_a\phi\to\Box_a\I_a\phi$. With the definition of $\Box_a$ in terms of $\I_a$ over reflexive frames, this can be rewritten as $\I_a\phi\to\neg\I_a\I_a\phi\land \I_a\phi$, equivalently, $\I_a\phi\to\neg\I_a\I_a\phi$, which in turn is the contraposition of axiom $\text{wI-5}$. Together with the contraposition of axiom $\text{wI-4}$, namely, $\neg\I_a\phi\to \neg\I_a\I_a\phi$, it follows that $\neg\I_a\I_a\phi$ is provable, which means that we do {\em not} have second-order ignorance. This is in line with the philosophical argument by Fine~\cite[p.~4034]{Fine:2018}.}


\weg{\section{Decidability}

\begin{definition}
Let $\M=\lr{S,\{R_a\mid a\in\Ag\},V}$ be a model. We say that $\M^f_\Sigma=\lr{S^f,\{R^f_a\mid a\in\Ag\},V^f}$ is a {\em filtration of $\M$ through $\Sigma$}, if for all $a\in\Ag$, for all $p\in\BP$,
\begin{itemize}
    \item[(i)] $S^f=\{|s|_\Sigma\mid s\in S\}$.
    \item[(ii)] If $sR_at$, then $|s|R^f_a|t|$.
    \item[(iii)] If $|s|R^f_a|t|$, $|s|R^f_a|u|$, then for all $\I_a\phi\in \Sigma$, if $\M,t\vDash\phi$ and $\M,u\nvDash\phi$, then $\M,s\vDash \I_a\phi$.
    \item[(iv)] $V^f(p)=\{|s|\mid \M,s\vDash p\}$.
\end{itemize}
\end{definition}}

\section{Concluding words}\label{sec.conclusion}

In this paper, we demonstrated that under the usual semantics, Rumsfeld ignorance is definable in terms of ignorance, which makes some existing results and the axiomatization problem trivial. A main reason is that the accessibility relations for the implicit knowledge operator contained in the packed operators of ignorance and Rumsfeld ignorance are the same. We then proposed a bi-semantics, in which the two accessibility relations are different. We showed that under the bi-semantics with the restriction that one relation is an arbitrary subset of the other, Rumsfeld ignorance is undefinable with ignorance, and we can also retain most of the validities in the case when the two accessibility relations are the same. We then axiomatized the logic of ignorance and Rumsfeld ignorance over various proper bi-frame classes. We finally applied our framework to analyze Fine's results in~\cite{Fine:2018}.

Although in our framework, some Fine's validities, for instance $(2^\ast)$ (mentioned in the introduction),  are lost, most of Fine's validities, for instance $(1)$, $(2)$, $(3)$, and $(4)$, are retained; most importantly, we avoid the issues of definability of Rumsfeld ignorance with ignorance and the triviality of the axiomatization problem of the logic of ignorance and Rumsfeld ignorance.

Note that our axiomatizations are infinitary. For the future work, we hope to find a finite axiomatization for the minimal logic of ignorance and Rumsfeld ignorance, and its extensions over other special frame classes, such as S4, KD45 and S5. Another work is axiomatizing the logic of propositional logic with Rumsfeld ignorance as a sole primitive modality. Moreover, when it comes to the multi-agent case, $\neg\I_ap\land\I_b\neg\I_ap$ is read that ``$a$ knows whether $p$ but $b$ is ignorant about that'', which seemingly expresses a kind of secret. We can investigate the logical properties of such kind of secret.

\subsection*{Acknowledgements}

This article is supported by the Fundamental Research Funds for the Central Universities. The author acknowledges two anonymous referees of this journal for their insightful and helpful comments. Special thanks go to Lloyd Humberstone and Yanjing Wang for their very helpful comments. The author also thanks three anonymous referees of NCML 2022 and the audience of that conference, where an earlier version of this article is presented.



\weg{\section{Neighborhood semantics}

\[
\begin{array}{lll}
    \M,s\vDash\I_a\phi & \iff & \phi^{\M}\notin N(s)\text{ and }(\neg\phi)^{\M}\notin N(s) \\
    \M,s\vDash\I^R_a\phi & \iff & \M,s\vDash\I_a\phi\text{ and }(\I_a\phi)^\M\notin N(s)\\
\end{array}
\]

The neighborhood function needs to satisfy the property $(t)$:
\begin{center}
    if $X\in N(s)$, then $s\in X$.
\end{center}

\begin{proposition}
$\vDash \I_a\I_a\phi\land \I_a\phi\to\I_a^R\phi$
\end{proposition}

\begin{proof}
Suppose that $\M,s\vDash \I_a\I_a\phi\land \I_a\phi$, then by $\M,s\vDash\I_a\I_a\phi$, we derive that $(\I_a\phi)^\M\notin N(s)$, and then by $\M,s\vDash\I_a\phi$, we conclude that $\M,s\vDash\I_a^R\phi$.
\end{proof}

\begin{proposition}
$\vDash\I_a^R\phi\to \I_a\phi$
\end{proposition}

\begin{proof}
Straightforward by semantics.
\end{proof}

Question: prove that $\I^R_a$ is undefinable in $\mathcal{L}(\I)$ under neighborhood semantics.

\begin{proposition}
$\I^R_a$ is undefinable in $\mathcal{L}(\I)$ under neighborhood semantics.
\end{proposition}

\begin{proof}
Consider the following models:
$$\xymatrix{&\{s\}&\{s,t\}&&&\{s'\}&\{t'\}\\
\M&s:p\ar[u]&t:p\ar[u]&&\M'&s':p\ar[u]&t':p\ar[u]}$$
First, $s\nvDash\I_a^Rp$. Since $p^\M=\{s,t\}\notin N(s)$ and $S\backslash p^\M=\emptyset\notin N(s)$, we have $\M,s\vDash\I_a p$. As $p^\M\in N(t)$, we infer that $\M,t\nvDash\I_a p$. Thus $(\I_ap)^\M=\{s\}\in N(s)$, which implies that $\M,s\nvDash\I^R_ap$.

Moreover, $\M',s'\vDash \I^R_ap$. Since $p^{\M'}=\{s',t'\}\notin N'(s')$ and $S'\backslash p^{\M'}=\emptyset\notin N'(s')$, we derive that $\M',s'\vDash\I_a p$. Similarly, we can show that $\M',t'\vDash\I_ap$. Thus $(\I_ap)^{\M'}=\{s',t'\}\notin N'(s')$. This gives us $\M',s'\vDash\I^R_ap$.

However, $(\M,s)$ and $(\M',s')$ cannot be distinguished by $\mathcal{L}(\I)$. This can be shown via a bisimulation argument. Define a relation $Z$ between $\M$ and $\M'$ such that $Z=\{(s,s')\}$. We can show that $Z$ is a $nbh$-$\Delta$-bisimulation. ($s\nvDash \I_a\I_ap$ but $s'\vDash\I_a\I_ap$)
\end{proof}

\weg{\begin{proposition}
$\vDash_t\I^R_a\phi\to\I_a\I_a\phi$.
\end{proposition}

\begin{proof}
Suppose that $\M,s\vDash_t\I^R_a\phi$, to show that $\M,s\vDash_t\I_a\I_a\phi$. By supposition, we have $\M,s\vDash_t \I_a\phi$ and $(\I_a\phi)^{\M}\notin N(s)$. It suffices to show that $(\neg\I_a\phi)^{\M}\notin N(s)$: if not, namely $(\neg\I_a\phi)^\M\in N(s)$, then $s\in (\neg\I_a\phi)^\M$, that is, $\M,s\vDash_t\neg\I_a\phi$: a contradiction.
\end{proof}

On $(t)$-frames, $\I^R_a\phi$ is definable as $\I_a\phi\land\I_a\I_a\phi$!}

\begin{definition}
$\M^c=\lr{S^c,N^c,V^c}$ is the canonical model for ..., if
\begin{itemize}
    \item $N^c(s)=\{|\phi|\mid \neg\I_a\phi\in s\}$
\end{itemize}
\end{definition}

\begin{lemma}
For all $\phi\in\IRI$, for all $s\in S^c$, we have
$$\M^c,s\vDash\phi\iff \phi\in s.$$
\end{lemma}

\begin{proof}
By $<^S_d$-induction on $\phi$, where the only nontrivial cases are $\I_a\phi$ and $\I^R_a\phi$. The case $\I_a\phi$ has been shown in ... It remains only to treat the case $\I_a^R\phi$. By Prop.~\ref{prop.sd-induction}(d), $\I_a\phi<^S_d\I_a^R\phi$.

Suppose that $\I^R_a\phi\in s$, by induction hypothesis, we need to show that $\I_a\phi\in s$ and $|\I_a\phi|\notin N^c_a(s)$. $\I_a\phi\in s$ is straightforward from the axiom ... . If $|\I_a\phi|\in N^c_a(s)$, then $\neg\I_a\I_a\phi\in s$. However, by supposition and the axiom ..., $\I_a\I_a\phi\in s$: a contradiction.

Conversely, assume that $\I^R_a\phi\notin s$ (that is, $\neg\I^R_a\phi\in s$), by induction hypothesis, it suffices to show that $\I_a\phi\notin s$ or $|\I_a\phi|\in N^c_a(s)$. If $\I_a\phi\notin s$, then we are done. Otherwise, that is, if $\I_a\phi\in s$, then by assumption and the axiom ..., we obtain that $\neg\I_a\I_a\phi\in s$, and therefore $|\I_a\phi|\in N^c_a(s)$, as desired.
\end{proof}

\weg{\begin{definition}
The {\em canonical model} for $\mathbb{IRI}$ is a tuple $\M^c=\lr{S^c,\{R_a^c\mid a\in\Ag\},V^c}$, where
\begin{itemize}
    \item $S^c=\{s\mid s\text{ is a maximal consistent set for }\mathbb{IRI}\}$.
    \item $sR_a^ct$ iff for all $\phi$, if $\neg \I_a^R\phi\land \I_a\phi\in s$, then $\I_a\phi\in t$.
    \item $V^c(p)=\{s\in S^c\mid p\in s\}$.
\end{itemize}
\end{definition}

\begin{lemma}[Truth Lemma]
For all $s\in S^c$, for all $\phi\in \IRI$, we have
$$\M^c,s\vDash\phi\text{ iff }\phi\in s.$$
\end{lemma}

\begin{proof}
By induction on $\phi$. The nontrivial cases are $\I_a\phi$ and $\I^R_a\phi$.
\begin{itemize}
    \item Case $\I^R_a\phi$. Suppose that $\I_a^R\phi\in s$, to show that $\M^c,s\vDash\I_a^R\phi$. By supposition and axiom ..., $\I_a\phi\in s$. We now prove the claim that $\{\psi\mid\neg\I_a\psi\land\psi\in s\}\cup\{\neg\I_a\phi\}$ is consistent. If not, there are $\psi_1,\cdots,\psi_n$ such that $\neg\I_a\psi_i\land\psi_i\in s$ for each $1\leq i\leq n$ and $\vdash \psi_1\land\cdots\land\psi_n\to\I_a\phi$. By ..., $$\vdash (\neg\I_a\psi_1\land\psi_1)\land\cdots\land(\neg\I_a\psi_n\land\psi_n)\to \neg \I^R_a\phi.$$
    Then $\I_a^R\phi\in s$, contradicting the supposition.

    Now that $\{\psi\mid \neg\I_a\psi\land\psi\in s\}\cup\{\neg\I_a\phi\}$ is consistent. By Lindenbaum's Lemma, there exists $t\in S^c$ such that $\{\psi\mid \neg\I_a\psi\land\psi\in s\}\subseteq t$ and $\neg\I_a\phi\in t$. Thus $sR^ct$ and $\I_a\phi\notin t$. Now by induction hypothesis, $\M^c,s\vDash \I_a\phi$ and for some $t\in S^c$ such that $sR^c_at$ and $\M^c,t\nvDash\I_a\phi$. Therefore, $\M^c,s\vDash \I^R_a\phi$.

    \medskip

    Conversely, assume that $\I^R_a\phi\notin s$, to prove that $\M^c,s\nvDash \I_a^R\phi$. By induction hypothesis, this amounts to showing that if $\I_a\phi\in s$, then for all $t\in S^c$ such that $sR^c_at$, we have $\I_a\phi\in t$. For this, suppose that $\I_a\phi\in s$ and $sR^c_at$. By assumption and axiom ..., $\I_a\I_a\phi\notin s$, that is, $\neg\I_a\I_a\phi\in s$, and thus $\neg\I_a\I_a\phi\land\I_a\phi\in s$. By definition of $R^c$, we derive that $\I_a\phi\in t$.
\end{itemize}
\end{proof}
}

\section{An Alternative Neighborhood semantics}

A minimal neighborhood model for $\IRI$ is a tuple $\M=\lr{S,\{N_a\mid a\in\Ag\},\{N_a^R\mid a\in \Ag\},V}$, where $S$ is a nonempty set of states, $V$ is a valuation, and for each $a\in \Ag$, $N_a$ and $N^R_a$ are neighborhood functions from $S$ to $\mathcal{P}(\mathcal{P}(S))$ such that $N^R_a(s)\subseteq N_a(s)$ for all $s\in S$.

\[
\begin{array}{lll}
    \M,s\vDash\I_a\phi & \iff & \phi^{\M}\notin N_a(s)\text{ and }S\backslash\phi^{\M}\notin N_a(s) \\
    \M,s\vDash\I^R_a\phi & \iff & \M,s\vDash\I_a\phi\text{ and }(\I_a\phi)^\M\notin N^R_a(s)\\
\end{array}
\]

Note that we have a restriction to the minimal neighborhood models, namely, $N^R_a(s)\subseteq N_a(s)$ for all $s\in S$. This is because we want the validity $\I^R_a\phi\to \I_a\phi$, that is, Rumsfeld ignorance implies (first-order) ignorance, which conforms to Fine's definition of Rumsfeld ignorance (BUT from definition we can derive that $\I^R_a\phi\to\I_a\phi$ directly, without need of the restriction).

(Find an example to indicate that one is second-order ignorant whether $\phi$ but may be not ignorant whether $\phi$.)

Question: prove that $\I^R_a$ is undefinable in $\mathcal{L}(\I)$ under neighborhood semantics.

\subsection{Minimal logic}

all instances of tautologies

$\I_a\phi\lra \I_a\neg\phi$

$\I_a\phi\land\I_a\I_a\phi\to \I_a^R\phi$

$\I^R_a\phi\to \I_a\phi$

$\dfrac{\phi~~~\phi\to\psi}{\psi}$

$\dfrac{\phi\lra \psi}{\I_a\phi\lra \I_a\psi}$

$\dfrac{\phi\lra \psi}{\I^R_a\phi\lra \I^R_a\psi}$

\begin{proposition}
$\vDash\I_a\phi\land\I_a\I_a\phi\to \I_a^R\phi$.
\end{proposition}

\begin{proof}
Suppose for some minimal neighborhood model  $\M=\lr{S,\{N_a\mid a\in\Ag\},\{N_a^R\mid a\in \Ag\},V}$ and $s$ such that $\M,s\vDash \I_a\phi\land\I_a\I_a\phi$. Then by $\M,s\vDash\I_a\I_a\phi$, we have $(\I_a\phi)^{\M}\notin N_a(s)$. Since $N^R_a(s)\subseteq N_a(s)$, we infer that $(\I_a\phi)^{\M}\notin N^R_a(s)$. Therefore, $\M,s\vDash\I^R_a\phi$, as desired.
\end{proof}

\begin{definition}
The canonical model for ... is a tuple $\M^c=\lr{S^c,\{N^c_a\mid a\in\Ag\},\{N^{Rc}_a\mid a\in\Ag\},V^c}$, where
\begin{itemize}
    \item $S^c=\{s\mid s\text{ is a maximal consistent set for }...\}$,
    \item $N^c_a(s)=\{|\phi|\mid \neg\I_a\phi\in s\}$,
    \item $N^{Rc}_a(s)=\{|\I_a\phi|\mid \neg\I^R_a\phi\land\I_a\phi\in s\}$,
    \item $V^c(p)=\{s\mid p\in s\}$.
\end{itemize}
\end{definition}

First, we show that $\M^c$ is indeed a minimal neighborhood model for ...
\begin{proposition}
For all $s\in S^c$, for all $a\in\Ag$, we have
$$N_a^{Rc}(s)\subseteq N_a^c(s).$$
\end{proposition}

\begin{proof}
Let $s\in S^c$.
Suppose that $X\in N_a^{Rc}(s)$, to show that $X\in N^c_a(s)$. By supposition, $X=|\I_a\phi|$ for some $\phi$. Then $\neg\I^R_a\phi\land\I_a\phi\in s$. By axiom ..., $\neg\I_a\I_a\phi\in s$. Then $|\I_a\phi|\in N^c_a(s)$, that is, $X\in N^c_a(s)$, as desired.
\end{proof}

\begin{lemma}
For all $s\in S^c$, for all $\phi\in\IRI$, we have
$$\M^c,s\vDash\phi\iff \phi\in s.$$
That is, $\phi^{\M^c}=|\phi|$.
\end{lemma}

\begin{proof}
By $<_d^S$-induction on formulas. The nontrivial cases are $\I_a\phi$ and $\I^R_a\phi$. The case $\I_a\phi$ has been shown in .... The remainder is the case $\I^R_a\phi$.

Suppose that $\I^R_a\phi\in s$. Then by axiom ..., $\I_a\phi\in s$. Then by induction hypothesis, $\M^c,s\vDash\I_a\phi$. Moreover, by definition of $N^{Rc}_a$, $|\I_a\phi|\notin N^{Rc}_a(s)$, then by induction hypothesis, we infer that $(\I_a\phi)^{\M}\notin N^{Rc}(s)$. Therefore, $\M^c,s\vDash\I^R_a\phi$.

Conversely, assume that $\I^R_a\phi\notin s$ (namely $\neg\I^R_a\phi\in s$). To show that $\M^c,s\nvDash\I^R_a\phi$, by induction hypothesis, we need to show that either $\I_a\phi\notin s$ or $|\I_a\phi|\in N^{Rc}_a(s)$. If $\I_a\phi\notin s$, then we are done. Otherwise, that is, $\I_a\phi\in s$, then by definition of $N^{Rc}_a$, we conclude that $|\I_a\phi|\in N^{Rc}(s)$.
\end{proof}

\begin{theorem}
... is sound and strongly complete with respect to the class of all minimal neighborhood frames.
\end{theorem}

\subsection{Monotone logic}

\[
\begin{array}{ll}
\text{DIS} & \neg\I_a\phi\to \neg\I_a(\phi\vee\psi)\vee\neg \I_a(\neg\phi\vee\chi)\\
\text{MIX} & \neg \I^R_a\phi\land \I_a\phi\to \neg \I_a (\I_a\phi\vee\chi)\\
\end{array}
\]

\begin{proposition}\label{prop.valid-mix}
$\text{MIX}$ is sound with respect to the class of minimal neighborhood frames satisfying $(m)$.
\end{proposition}

\begin{proof}
Let $\M=\lr{S,\{N_a\mid a\in\Ag\},\{N_a^R\mid a\in\Ag\},V}$ be a minimal neighborhood model satisfying $(m)$, and $s\in S$. Suppose that $\M,s\vDash \neg\I^R_a\phi\land\I_a\phi$. Then $(\I_a\phi)^\M\in N^R_a(s)$, thus $(\I_a\phi)^\M\in N_a(s)$. Since $\M$ possesses $(m)$ and $(\I_a\phi)^\M\subseteq (\I_a\phi)^\M\cup\psi^{\M}=(\I_a\phi\vee\psi)^{\M}$, we obtain $(\I_a\phi\vee\psi)^{\M}\in N_a(s)$. Therefore, $\M,s\vDash \neg\I_a(\I_a\phi\vee\psi)$.
\end{proof}

\begin{proposition}
The following rule, denoted ..., is derivable in ...:
$$\dfrac{\I_a\psi\to\I_a\phi}{\neg\I^R_a\psi\land\I_a\psi\to\neg\I^R_a\phi\land\I_a\phi}.$$
\end{proposition}

\begin{definition}
A canonical model for ... is a tuple $\M^c=\{S^c,\{N^c_a\mid a\in\Ag\},\{N^{Rc}_a\mid a\in \Ag\},V^c\}$, where
\begin{itemize}
    \item $N^c_a(s)=\{|\phi|\mid \neg\I_a(\phi\vee\psi)\in s \text{ for all }\psi\}$
    \item $N^{Rc}_a(s)=\{|\I_a\phi|\mid \neg\I^R_a\phi\land\I_a\phi\in s\}$.
\end{itemize}
\end{definition}

\begin{proposition}
For all $s\in S^c$, for all $a\in \Ag$, we have
$$N^{Rc}_a(s)\subseteq N^c_a(s).$$
\end{proposition}

\begin{proof}
By using the axiom $\text{MIX}$.
\end{proof}

\begin{lemma}\label{lem.truthlem}
For all $s\in S^c$, for all $\phi\in \IRI$, we have
$$\M^c,s\vDash\phi\iff \phi\in s,$$
that is, $\phi^{\M^c}=|\phi|$.
\end{lemma}

\begin{proof}
By $<^S_d$-induction on $\phi$. The nontrivial cases are $\I_a\phi$ and $\I^R_a\phi$.

\begin{itemize}
\item The case $\I_a\phi$.

First, suppose that $\I_a\phi\in s$, to prove that $\M^c,s\vDash \I_a\phi$. Suppose not, then $\phi^{\M^c}\in N_a^c(s)$ or $(\neg\phi)^{\M^c}\in N^c_a(s)$. If the former case holds, by induction hypothesis, we infer that $|\phi|\in N^c_a(s)$, and thus $\neg\I_a(\phi\vee\psi)\in s$ for all $\psi$. Letting $\psi=\bot$, we derive that $\neg\I_a\phi\in s$, contrary to the supposition and the consistency of $s$. If the latter case holds, then with a similar argument, we can arrive at a contradiction.

Conversely, assume that $\I_a\phi\notin s$, to demonstrate that $\M^c,s\nvDash\I_a\phi$. By assumption, we infer that $\neg\I_a\phi\in s$. Then by axiom ..., $\neg\I_a(\phi\vee\psi)\in s$ for all $\psi$, or $\neg\I_a(\neg\phi\vee\chi)\in s$ for all $\chi$. Then $|\phi|\in N^c_a(s)$ or $|\neg\phi|=S\backslash|\phi|\in N^c_a(s)$. Then by induction hypothesis, $\phi^{\M^c}\in N^c_a(s)$ or $S\backslash\phi^{\M^c}\in N^c_a(s)$. Therefore, $\M^c,s\nvDash\I_a\phi$.

\item The case $\I_a^R\phi$.

First, suppose that $\I^R_a\phi\in s$, to show that $\M^c,s\vDash\I^R_a\phi$. By supposition and axiom ..., $\I_a\phi\in s$. By induction hypothesis, $\M^c,s\vDash\I_a\phi$. Moreover, as $\I^R_a\phi\in s$, we have $\neg\I^R_a\phi\land \I_a\phi\notin s$, and then $|\I_a\phi|\notin N^{Rc}_a(s)$, which by induction hypothesis implies that $(\I_a\phi)^{\M^c}\notin N^{Rc}_a(s)$. Therefore, $\M^c,s\vDash\I_a^R\phi$.

Conversely, assume that $\I_a^R\phi\notin s$, to prove that $\M^c,s\nvDash\I^R_a\phi$. If $\M^c,s\nvDash\I_a\phi$, then we are done. Otherwise, that is, $\M^c,s\vDash\I_a\phi$, by induction hypothesis, we infer that $\I_a\phi\in s$. By assumption, $\neg\I^R_a\phi$, and thus $\neg\I^R_a\phi\land\I_a\phi\in s$. This entails that $|\I_a\phi|\in N^{Rc}_a(s)$. By induction hypothesis again, we derive that $(\I_a\phi)^{\M^c}\in N^{Rc}_a(s)$, and therefore $\M^c,s\nvDash\I^R_a\phi$, as desired.
\end{itemize}
\end{proof}

\begin{definition}
Let $\M=\lr{S,\{N_a\mid a\in \Ag\},\{N_a^R\mid a\in \Ag\},V}$ be a minimal model. We say that $\M^+=\lr{S,\{N^+_a\mid a\in\Ag\},\{N^{R+}_a\mid a\in\Ag\},V}$ is the {\em supplementation} of $\M$, if for all $s\in S$, $N^+_a(s)=\{X\mid Y\subseteq X \text{ for some }Y\in N_a(s)\}$ and $N^{R+}_a(s)=\{X\mid Y\subseteq X \text{ for some }Y\in N^R_a(s)\}$.
\end{definition}

It is straightforward to verify that for any minimal neighborhood model, its supplementation is supplemented, and $N_a(s)\subseteq N^+_a(s)$ and $N_a^R(s)\subseteq N^{R+}_a(s)$ for all $s\in S$. Moreover, the supplementation preserves the properties of minimality, $(c)$ and $(n)$.
\begin{proposition}
Let $\M$ be a minimal neighborhood model. Then
its supplementation $\M^+$ is also a minimal neighborhood model.
\end{proposition}

\begin{proof}
Let $s\in S$. It suffices to show that $N^{R+}_a(s)\subseteq N^+_a(s)$.

Suppose $X\in N^{R+}_a(s)$, then $Y\subseteq X$ for some $Y\in N^R_a(s)$. Since $\M$ is minimal, we infer that $Y\in N_a(s)$. It then concludes that $X\in N^+_a(s)$.
\end{proof}

\begin{proposition}
If $\M$ has $(c)$, then so does $\M^+$; if $\M$ has $(n)$, then so does $\M^+$.
\end{proposition}

\begin{theorem}
... is sound and strongly complete with respect to the class of minimal neighborhood frames satisfying $(m)$.
\end{theorem}

\begin{proof}
By Prop.~\ref{prop.valid-mix} and Lemma~\ref{lem.truthlem}, we only need to show that
$$|\I_a\phi|\in N^{R+}_a(s)\iff \neg\I_a^R\phi\land \I_a\phi\in s.$$

``$\Longleftarrow$'' follows directly from the fact that $N^{Rc}_a(s)\subseteq N^{R+}_a(s)$.

For ``$\Longrightarrow$'', suppose that $|\I_a\phi|\in N^{R+}_a(s)$, then $Y\subseteq |\I_a\phi|$ for some $Y\in N_a^{Rc}(s)$. Since $Y\in N^{Rc}_a(s)$, then $Y=|\I_a\psi|\in N^{Rc}_a(s)$ for some $\psi$, and thus $\neg\I^R_a\psi\land\I_a\psi\in s$. As $|\I_a\psi|\subseteq |\I_a\phi|$, we have $\vdash \I_a\psi\to\I_a\phi$. By ..., $\vdash\neg\I^R_a\psi\land\I_a\psi\to \neg\I^R_a\phi\land\I_a\phi$. Therefore, $\neg\I_a^R\phi\land\I_a\phi\in s$, as desired.
\end{proof}}

\bibliographystyle{plain}
\bibliography{biblio2019,biblio2016}

\weg{\section*{Appendix: Proofs}

\Firstthm*
\begin{proof}
By completeness of $\mathbb{IRIS}4$ (Thm.~\ref{thm.comp-extensions}(8)), it remains only to show that all the formulas are valid over the class of $\mathcal{S}4$-frames.
Let $\M=\lr{S,\{R_a\mid a\in\Ag\},V}$ be an arbitrary $\mathcal{S}4$-model and $s\in S$.

For (1), suppose, for reductio, that $\M,s\vDash\I_a(\I_a\phi_1\land\cdots\land\I_a\phi_n)$ and $\M,s\nvDash\I_a\I_a(\I_a\phi_1\land\cdots\land\I_a\phi_n)$. Then there are $t,u$ such that $sR_at$, $sR_au$ and $\M,t\vDash \I_a\phi_1\land\cdots\land\I_a\phi_n$ and $\M,u\nvDash \I_a\phi_1\land\cdots\land\I_a\phi_n$. By reflexivity, $sR_as$. By supposition again, we have $\M,u\vDash \I_a(\I_a\phi_1\land\cdots\land\I_a\phi_n)$. Then there exists $v$ such that $uR_av$ and $v\vDash\I_a\phi_1\land\cdots\I_a\phi_n$. From $\M,u\nvDash \I_a\phi_1\land\cdots\I_a\phi_n$, it follows that $\M,u\nvDash\I_a\phi_i$ for some $1\leq i\leq n$. From $\M,v\vDash \I_a\phi_i$, it follows that for some $v_1,v_2$ such that $vR_av_1$ and $vR_av_2$ and $\M,v_1\vDash\phi_i$ and $\M,v_2\nvDash\phi_i$. By $uR_av$, $vR_av_1$, $vR_av_2$ and the transitivity of $R_a$, we infer that $uR_av_1$ and $uR_av_2$. Then we should have also $\M,u\vDash \I_a\phi_i$: a contradiction.

(2) can be shown as (1).

For (3), assume, for a contradiction, that $\M,s\vDash\I_a\I_a\phi_1\land\cdots\land\I_a\I_a\phi_n$ and $\M,s\nvDash\I_a(\I_a\I_a\phi_1\land\cdots\land\I_a\I_a\phi_n)$. Then there are $t,u$ such that $sR_at$ and $sR_au$ and $\M,t\vDash \I_a\phi_1$ and $\M,u\nvDash\I_a\phi_1$. By reflexivity, $sR_as$. Then by assumption again, $\M,u\vDash\I_a\I_a\phi_1\land\cdots\land\I_a\I_a\phi_n$. From $\M,u\vDash \I_a\I_a\phi_1$, it follows that for some $v$ and $x$ such that $uR_av$ and $uR_ax$ and $\M,v\vDash \I_a\phi_1$ and $\M,x\nvDash \I_a\phi_1$. Then from $\M,v\vDash\I_a\phi_1$, it follows that for some $y_1,y_2$ such that $vR_ay_1$, $vR_ay_2$ and $\M,y_1\vDash\phi_1$ and $\M,y_2\nvDash\phi_1$. By $uR_av$, $vR_ay_1$, $vR_ay_2$ and the transitivity of $R_a$, we derive that $uR_ay_1$ and $uR_ay_2$. This would entail that $\M,u\vDash \I_a\phi_1$: a contradiction.

For (4), suppose, for reductio, that $\M,s\vDash\I_a\I_a\phi_1\lor\cdots\lor\I_a\I_a\phi_n$ and $\M,s\nvDash\I_a(\I_a\I_a\phi_1\lor\cdots\lor\I_a\I_a\phi_n)$. Then $\M,s\vDash\I_a\I_a\phi_i$ for some $i\in[1,n]$. W.l.o.g. we may assume that $i=1$, that is, $\M,s\vDash\I_a\I_a\phi_1$. Then there exists $s_1$ such that $sR_as_1$ and $\M,s_1\nvDash\I_a\phi_1$. By reflexivity, $sR_as$. Then using the supposition again, we have $\M,s_1\vDash\I_a\I_a\phi_1\vee\cdots\lor\I_a\I_a\phi_n$. It must be the case that $\M,s_1\nvDash\I_a\I_a\phi_1$: otherwise, by the validity of $\text{wI-4}$, we would have $\M,s_1\vDash\I_a\phi_1$: a contradiction. This follows that $\M,s_1\vDash\I_a\I_a\phi_2\vee\cdots\vee\I_a\I_a\phi_n$. Again, w.l.o.g. we may assume that $\M,s_1\vDash\I_a\I_a\phi_2$. Then there exists $s_2$ such that $s_1R_as_2$ and $\M,s_2\nvDash\I_a\phi_2$. This then implies $\M,s_2\nvDash\I_a\I_a\phi_2$. By $s_1R_as_1$ and $\M,s_1\nvDash\I_a\phi_1$ and $\M,s_1\nvDash\I_a\I_a\phi_1$, we have $\M,s_2\nvDash\I_a\phi_1$, and thus $\M,s_2\nvDash\I_a\I_a\phi_1$. Moreover, by transitivity, $sR_as_2$. Then by supposition again, $\M,s_2\vDash\I_a\I_a\phi_1\vee\cdots\vee\I_a\I_a\phi_n$, and thus $\M,s_2\vDash\I_a\I_a\phi_3\vee\cdots\vee\I_a\I_a\phi_n$. Continuing with this process, we can obtain a sequence $s_2R_as_3R_a\cdots R_as_{n}$ in $\M$ and $s_{n}\nvDash\I_a\I_a\phi_1\vee\cdots\vee\I_a\I_a\phi_{n}$. However, by $sRs_n$ and supposition, $\M,s_n\vDash \I_a\I_a\phi_1\vee\cdots\vee\I_a\I_a\phi_n$: a contradiction, as desired.
\end{proof}

\Firstprop*
\begin{proof}
By completeness of $\mathbb{IRIS}4$ (Thm.~\ref{thm.comp-extensions}(8)), it remains only to prove the validity of these formulas over the class of $\mathcal{S}4$-frames. We only show the case (1), since the proof of (2) is similar, and the formulas in (3) and (4) are provably equivalent to those in (1) and (2), respectively, via the axiom $\text{I-Equ}$ and the inference rule $\text{RE-I}$.

Let $\M=\lr{S,\{R_a\mid a\in\Ag\},V}$ be an $\mathcal{S}4$-model and $s\in S$. Suppose, for reductio, that $\M,s\vDash\I_a(\neg\I_a\phi\land \phi)$ and $\M,s\nvDash\I_a\I_a(\neg\I_a\phi\land\phi)$. From $\M,s\vDash\I_a(\neg\I_a\phi\land \phi)$, it follows that there exists $t$ such that $sR_at$ and $\M,t\vDash\neg\I_a\phi\land\phi$. By reflexivity, $sR_as$, thus $\M,t\vDash \I_a(\neg\I_a\phi\land\phi)$. Then there exists $u$ such that $tR_au$ and $\M,u\nvDash \neg\I_a\phi\land\phi$. From $\M,t\vDash\neg\I_a\phi\land\phi$ and $tR_at$, we have $\M,u\vDash\phi$, and thus $\M,u\nvDash\neg\I_a\phi$, that is, $\M,u\vDash\I_a\phi$. This entails that for some $v_1$, $v_2$ such that $uR_av_1$, $uR_av_2$ we have $\M,v_1\vDash\phi$ and $\M,v_2\nvDash\phi$. By transitivity, $tR_av_1$ and $tR_av_2$, and thus $\M,t\vDash\I_a\phi$: a contradiction.
\end{proof}

\Secondthm*
\begin{proof}
The `if' part is straightforward by axiom $\text{wI-4}$. For the `only if' part, it suffices to show $\I_a\phi\to\I_a\I_a\phi$ is provable in $\mathbb{IRIS}4$ for modalized $p$-formulas $\phi$. According to the definition of modalized $p$-formula, $\phi$ is equivalent to one of the following formulas in $\mathbb{IRIS}4$: $\top$, $\bot$, $\I_ap$, $\neg\I_ap$, $\neg\I_ap\land p$, $\neg\I_ap\land\neg p$, $\I_ap\vee\neg p$, $\I_ap\vee p$.

If $\phi$ is equivalent to $\top$ or $\bot$, we can show that $\neg\I_a\phi$ is provable via axiom $\text{I-Equ}$ and inference rule $\text{R-NI}$, thus $\I_a\phi\to\I_a\I_a\phi$ is provable in $\mathbb{IRIS}4$.

If $\phi$ is equivalent to $\I_ap$ or $\neg\I_ap$, we can show that $\I_a\phi\to\I_a\I_a\phi$ by Prop.~\ref{prop.iitoiii}, axiom $\text{I-Equ}$, and rule $\text{RE-I}$.

If $\phi$ is equivalent to one of the last four formulas, we use Prop.~\ref{prop.useful}.
\end{proof}

\Thirdthm*
\begin{proof}
Suppose towards contradiction that there is the weakest unboxable $p$-formula in $\mathbb{IRIS}4$, say $\phi$. Then $\phi\to\I_a\phi$ is provable in $\mathbb{IRIS}4$, and for all $\psi$, if $\psi\to\I_a\psi$ is provable in $\mathbb{IRIS}4$, then so is $\psi\to\phi$. Let $\psi$ be $q\land\I_aq$, where $q$ may be different from $p$. One may easily show that $q\land\I_aq\to\I_a(q\land\I_aq)$ is valid over the class of $\mathcal{S}4$-frames (only the reflexivity is used). Thus $q\land\I_a q\to\phi$ is provable in $\mathbb{IRIS}4$. Similarly, we can show that $\neg q\land\I_aq\to\phi$ is provable in $\mathbb{IRIS}4$. This implies that $\I_aq\to\phi$ is provable in $\mathbb{IRIS}4$. By the inference rule $\text{R-NI}$, we have the provable formula $\neg\I_a(\I_aq\to\phi)$. By axiom $\text{I-T}$, we get $\neg\I_a\I_aq\land\I_aq\to\neg\I_a\phi$, and thus $\neg\I_a\I_aq\land\I_aq\to\neg\I_a\phi\land \phi$, that is, $(\phi\to\I_a\phi)\to(\I_aq\to\I_a\I_aq)$ is provable. Therefore, $\I_aq\to\I_a\I_aq$ is provable in $\mathbb{IRIS}4$. By soundness, $\I_aq\to\I_a\I_aq$ is valid over the class of $\mathcal{S}4$-frames. This contradicts the remarks before Thm.~\ref{thm.provable}.
\end{proof}
}

\end{document}